\begin{document}
	
	\title{Element-based B-spline basis function spaces: construction and application in isogeometric analysis
	}
	\titlerunning{Element-based B-spline basis function spaces}
	
	
	\author{Peng Yang   \and
		Maodong Pan \and 
		Falai Chen \and
		Zhimin Zhang }
	
	
	\institute{Peng Yang \at
		School of Mathematical Sciences, University of Electronic Science and Technology of China, Chengdu, Sichuan 611731, China \\
		\email{pyang@uestc.edu.cn}           
		\and
		Maodong Pan $ (\textrm{\Letter}) $ \at 
		School of Mathematics, Nanjing University of Aeronautics and Astronautics, Nanjing, China\\
		\email{mdpan@mail.ustc.edu.cn}
		\and
		Falai Chen \at
		School of Mathematics, University of Science and Technology of China, Hefei, Ahui 230026, China \\
		\email{chenfl@ustc.edu.cn} 
		\and
		Zhimin Zhang \at 
		Department of Mathematics, Wayne State University, Detroit, Michigan, 48202, USA \\
		\email{ag7761@wayne.edu}    
	}
	
	\date{Received: date / Accepted: date}

	\maketitle
	
	\begin{abstract}
		This paper develops a unified theoretical framework for constructing B-spline basis function spaces with structural equivalence to finite element spaces. The theory rigorously establishes that these bases emerge as explicit linear combinations of B-spline element bases. For any prescribed smoothness requirements, this element-wise formulation enables the Hermite interpolation at nodes, which directly utilizes function values and derivatives without solving global linear systems.
		By focusing on explicit interpolation properties, element-wise analysis establishes optimal approximation errors, even when the space smoothness attains its theoretical maximum for the space degree. In isogeometric analysis (IgA), the construction naturally decomposes geometric mappings into element-level representations, allowing efficient computations across elements regardless of node distribution. Notably, the same Hermite interpolation framework simultaneously handles domain parameterization and IgA solutions, allowing direct imposition of boundary conditions through function and derivative matching. Numerical tests demonstrate optimal convergence rates and superconvergence properties in 2D IgA under uniform knot configurations, and improved computational efficiency in 3D IgA with non-uniform knot distributions.
		
		\keywords{	B-spline element   \and Hermite interpolation \and spline approximation \and isogeometric analysis}
		\subclass{65D07 \and 65D05 \and 41A15}
	\end{abstract}
	
	\section{Introduction}
	\label{intro}
	
	The B-spline basis function space has been extensively applied across diverse fields including data analysis, visualization, geometric modeling, and numerical simulations. As a high-smoothness space, it serves as a frequent choice for finite element solution spaces in discretizing high-order partial differential equations or enforcing high-continuity conditions \cite{Hollig2003,Hollig2001,Hu2015,Schaeuble2017,Chung1979}. In Computer-Aided Design (CAD), B-splines attain significant geometric modeling capabilities through their extension to non-uniform rational B-splines (NURBS) \cite{piegl2012nurbs}. The concept of isogeometric analysis (IgA), introduced in \cite{Hughes2009,Hughes2005IGA}, integrates CAD and numerical computation by using NURBS-defined geometric domains to approximate unknown solutions of partial differential equations. This approach shares a conceptual alignment with finite element methods (FEM), cf.  \cite{hughes2000finite}. The similarity particularly manifests in IgA's basis functions and the isoparametric elements of FEM through their common parametric-to-physical mapping framework. However, their parametric mapping mechanisms differ fundamentally: FEM-based isoparametric elements rely on piecewise local mappings defined per element, whereas IgA directly inherits global geometric parameterization from CAD systems. 
	The inherent geometric exactness of IgA has spurred extensive investigations \cite{Bazilevs2006,Guo2015,Sange2019,Sande2020,Beirao2014,Vuong2011}.
	
	For general knot vector configurations, the recursive definition of B-spline basis functions leads to computational inefficiency in numerical implementations.  In order to enhance practicality, \cite{Borden2011} proposed Bézier extraction, representing B-spline basis functions as element-wise Bernstein polynomials. This enables smoothly integrating isogeometric computations into finite element assembly procedures. For uniform knot vectors, explicit representations of B-spline basis functions permit efficient numerical algorithms via precomputed lookup tables for basis function inner products \cite{Antolin2015,Calabro2017,Pan2020}, with \cite{Pan2020} further approximating variable coefficients using B-spline quasi-interpolation. Nevertheless, unlike finite element methods that inherently utilize element-wise bases (e.g., $ C^0 $ Lagrange or $ C^p $ Hermite elements) with explicit interpolation properties to facilitate localized computations, B-spline-based approaches face challenges in achieving unified local operations. 
	
	The macro-element concept enables spline spaces to adopt element-wise basis definitions, as demonstrated for triangular/tetrahedral meshes, e.g., \cite{Alfeld2009,Cohen2013,Lai2007,Speleers2015,Lyche2022}. However, existing constructions often require polynomial degrees exceeding the theoretical minimum ($ p+1 $ for $ C^p $ continuity), with optimal-degree polynomial spaces remaining elusive until recent advances in $ C^2 $ cubic splines on triangular meshes \cite{Lyche2022FCM}. In one dimension or tensor-product settings, macro-elements combine intervals or rectangles into unified $``$elements" with endpoint Hermite interpolants. This process achieves optimal approximation estimates \cite{Hu2015} and demonstrates efficacy in high-frequency dynamics \cite{Zak2021}. Notably, such macro-element-defined bases generally differ from global B-spline bases. In summary,  incorporating the local advantages of macro-elements into B-spline basis function spaces would enable locally enhanced analytical properties (e.g., refined approximation power) and locally optimized computational procedures, while preserving global continuity requirements.
	
	This work proposes a novel B-spline space construction via macro-elements, termed B-spline elements, under arbitrary knot sequences (including repeated knots). Specifically, we first define local basis functions for B-spline elements, where each basis is restricted to its corresponding element domain. Then, the global B-spline basis is reconstructed by linearly combining these local bases across adjacent B-spline elements, governed by a recursive relationship (see Theorem~\ref{theorem, generate Bspline}). This process inherently unifies element-wise definitions with the classical B-spline framework. Moreover, we establish Hermite interpolation schemes for arbitrary smoothness constraints and prove the optimal error estimates by leveraging element analysis. This resolves a critical limitation in \cite{Beirao2011}, which, despite adopting similar element-wise interpolation strategies, requires polynomial degrees exceeding the theoretical minimum (e.g., $ p+1 $ for  $ C^{p} $ spaces) to achieve error bounds. Furthermore, the element-wise framework  enhances computations by decomposing CAD-generated global geometric mappings into local element mappings, aligning with finite element assembly workflows.  Variable coefficients in inner product integrals are locally approximated via Hermite interpolation, eliminating reliance on global quasi-interpolation \cite{Pan2020}. Combined with precomputed lookup tables for basis integrals, this approach removes knot uniformity constraints in prior table-driven methods \cite{Antolin2015,Calabro2017,Pan2020}. In addition, the explicit Hermite interpolation properties are applied to both the parameterised geometric mappings and the isogeometric solution spaces, allowing direct enforcement of high-order boundary constraints (e.g., displacement, rotation, and bending moment conditions in biharmonic problems). 
	
	The rest of this paper is structured as follows. Section~\ref{sec1} reformulates B-spline basis function spaces via B-spline element bases. Section~\ref{sec2} subsequently addresses their Hermite interpolation properties and proves the optimal approximation errors. Section~\ref{sec3} implements the framework in isogeometric analysis with localized computations. Section~\ref{sec4} presents numerical validations, followed by conclusions and future directions in Section~\ref{sec5}.
	\section{Element-based B-spline basis function spaces}
	\label{sec1}
	
	In this section, the B-spline basis function spaces will be constructed in a novel manner, which exhibits significant parallels  to the formulation of finite element spaces.
	The resulting basis functions emerge as linear combinations of the basis functions of B-spline elements. The discussion in this section is focused on one-dimensional spaces; however, for higher dimensional spaces, consideration is given directly to their tensor-product forms. The description of higher dimensional tensor-product  spaces is omitted here, as it is a natural concept and has been comprehensively covered in the majority of literature on this subject.
	
	A concise overview of B-spline basis function spaces is presented in  Subsection~\ref{subsec1.1}. Subsection~\ref{subsec1.2} then delineates a novel approach to their construction via B-spline elements, while Subsection~\ref{subsec1.3} provides an in-depth analysis of the explicit forms of B-spline element basis functions.
	\subsection{Overview of B-spline basis function spaces}
	\label{subsec1.1}
	We first introduce the knot vector
	\begin{eqnarray*}
		\Xi: = \{ \underbrace{\xi_{1},...,\xi_{1}}_{\mbox{$ r_1 $ times}},\underbrace{\xi_{2},...,\xi_{2}}_{\mbox{$ r_2 $ times}},...,\underbrace{\xi_{m},...,\xi_{m}}_{\mbox{$ r_m $ times}} \},
	\end{eqnarray*}
	where $ \xi_{1}< \xi_{2}<...<\xi_{m}$ are  distinctive  knots. Let $ r\xi $ denote a repeating $ \xi $,  occurring  $ r $ times.  Then the  knot vector $ \Xi $ simply reads as
	\begin{eqnarray*}
		\Xi: = \{ r_1\xi_{1},r_2\xi_{2},...,r_m\xi_{m} \}.
	\end{eqnarray*}
	Given a positive integer $ p  $, we consider  $ r_1=r_m=p+1 $ and $  r_2,...,r_{m-1}\leq p  $, in which case $ \Xi $ is called an \textit{open} knot vector.  
	
	Within the context of $ \Xi $, the B-spline basis function space of degree $ p $ can be constructed. In fact, upon noting  the knots of $ \Xi $ as a sequence of nondecreasing knots $ \{t_i,\,i=1,2,...,\sum_{a=1}^{m}r_a\} $ and letting $ T_{i}:=\{t_i,t_{i+1},...,t_{i+p+1}\} $ be the $ i $th knot vector consisting of $ p+2 $ consecutive knots selected from  $ \Xi $, the $ i $th B-spline basis function $ B_i(x) $ is obtained by $ T_{i}$  using  the following recurrence relation, e.g., in \cite{Boor1978,Hughes2005IGA},
	\begin{eqnarray}\label{recurrence relation,1}
		N_{j,0}(x)= \left\{
		\begin{aligned}
			&1,& t_{i+j-1}\leq x<t_{i+j},\\
			&0,& \mbox{otherwise}
		\end{aligned}
		\right. \qquad \mbox{for $ 1\leq j\leq p+1 $},
	\end{eqnarray}
	and 
	\begin{equation}\label{recurrence relation,2}
		\begin{split}
			N_{j,m}(x)= &\frac{x-t_{i+j-1}}{t_{i+j+m-1}-t_{i+j-1}}N_{j,m-1}(x)+\frac{t_{i+j+m}-x}{t_{i+j+m}-t_{i+j}}N_{j+1,m-1}(x), \\
			&\mbox{for $ 1\leq j\leq p+1-m,  1\leq m\leq p$},
		\end{split}
	\end{equation}
	then letting $ B_{i}(x):=	N_{1,p}(x) $. 
	
	Denote the set of B-spline basis functions  by
	\begin{eqnarray*}
		\mathcal{B}_{p}(\Xi): = \{B_{i}(x),  i=1,2,...,N_{\Xi}\}.
	\end{eqnarray*}
	where $ N_{\Xi}= \sum_{a=1}^{m}r_a-(p+1) $.
	For some important properties of $ \mathcal{B}_{p}(\Xi) $, we mention
	\begin{itemize}
		\item $ B_{i}(x) $ has the support $ \langle T_i \rangle $, which denotes the convex hull of $ T_i $.
		\item $ B_{i}(x) $ is a nonnegative piecewise polynomial with respect to the  distinctive  knots of $ T_i $.
		\item $ B_{i}(x) $ has $ C^{p-m} $ smoothness across a knot $ \xi $ of $ T_i $, where $ m$ is the repetition times of $ \xi $ in $ T_i $.
		\item On each interval $ (\xi_j,\xi_{j+1}) $, there are exactly $ p+1 $ B-spline basis functions having nonzero values. Moreover, the B-spline basis  forms a polynomial space of total degree $ p $ on $ \langle \Xi \rangle $.
		\item  The B-spline basis functions defined by $ \Xi $ form a partition of unity, i.e., for $ x \in \langle \Xi \rangle$,
		$ \sum_{i=1}^{N_{\Xi}}B_{i}(x)=1$.
	\end{itemize}
	The first three properties show the individual properties of B-spline basis functions, while the last two properties indicate their global properties on  $ \Xi $. In the general case where $ \Xi $ is not an open knot vector, a sequence of B-spline functions can nevertheless be obtained by means of recurrence relation \eqref{recurrence relation,1}-\eqref{recurrence relation,2}. However, these B-splines fail to satisfy the last two global properties mentioned above. The B-spline basis function space of degree $ p $ defined on the open knot vector $ \Xi $ is denoted by
	\begin{equation*}
		\mathcal{S}_{p}^{\boldsymbol{k}}(\Xi): = \text{span}\{\mathcal{B}_{p}(\Xi)\},
	\end{equation*}
	where the  vector $ \boldsymbol{k}:=\{k_1,k_2,...,k_m\}  $ captures the continuity of derivatives at knots $ \xi_1,...,\xi_m $ to the minimum degree. That is, $ k_i = p-r_i $ for $ i = 1,...,m $.
	
	In the end of this subsection, the knot insertion is addressed.  Clearly, it is sufficient to consider the insertion of one knot at a time since any (finite) refinement can be achieved by adding single knots iteratively \cite{Boor1978}.
	\begin{lemma}[Knot insertion]\label{lemma,knot insertion}
		Let $ \Xi:=\{t_i\}_{i=1}^{p+n+1} $ be a knot vector with nondecreasing sequence of knots, where $ p $ is the polynomial degree of the B-spline sequence defined on $ \Xi $, and $ n $ is the number of B-splines. Consider a knot  $ t^{*} $ which   satisfies $ t_{m-1}\leq t^{*}< t_m  $ for a certain index $  m $. Through the assignment
		\begin{eqnarray*}
			t_i^* = \left\{
			\begin{aligned}
				&t_i,& i<m,\\
				&t^*,& i=m,\\
				&t_{i-1},& i>m,\\
			\end{aligned}
			\right. \quad \forall 1\leq i\leq p+n+2,
		\end{eqnarray*}
		one can define the refined knot vector $ \Xi^*:=\{t^*_i\}_{i=1}^{p+n+2} $. Let $ \{a_i\}_{i=1}^{n} $ be any sequence of coefficients in $ \mathbb{R} $, and define
		\begin{eqnarray*}
			a^*_i:=(1-w_i)a_{i-1}+w_ia_i, \quad \mbox{where}\quad w_i = \left\{
			\begin{aligned}
				&0,&& \mbox{if \,\,$ t_i\geq t^* $},\\
				&\frac{t^*-t_i}{t_{i+p}-t_i},&& \mbox{if\,\, $ t_i < t^*< t_{i+p} $},\\
				&1,&& \mbox{if \,\,$ t_{i+p}\leq t^* $},\\
			\end{aligned}
			\right. 
		\end{eqnarray*}
		for $ 1\leq i \leq n+1 $ to obtain another sequence of coefficients $ \{a^*_i\}_{i=1}^{n+1} $  in $ \mathbb{R} $. Denote the B-spline sequence of degree $ p $ by $ \{B_i(x)\}_{i=1}^{n} $ for  $ \Xi $ and  $ \{B^*_i(x)\}_{i=1}^{n+1} $ for  $ \Xi^* $. Then, the following identity holds true.
		\begin{eqnarray}\label{eq:identity}
			\sum_{i=1}^{n}a_iB_i(x)=\sum_{i=1}^{n+1}a^*_iB^*_i(x).
		\end{eqnarray}
	\end{lemma}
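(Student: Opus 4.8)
The plan is to derive the global identity \eqref{eq:identity} from a \emph{local} two-scale relation expressing each coarse B-spline as a combination of at most two refined B-splines, and to prove that local relation by induction on the degree using the recurrence \eqref{recurrence relation,1}--\eqref{recurrence relation,2}.

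\textbf{Step 1 (reduction).} First I would establish, for $1\le i\le n$, the single-spline refinement relation
\begin{equation*}
	B_i(x)=w_i\,B^*_i(x)+(1-w_{i+1})\,B^*_{i+1}(x),
\end{equation*}
with $w_i$ the blending weights of the statement. Granting this, I substitute it into the left-hand side of \eqref{eq:identity}, split into two sums, and shift the index $i\mapsto i-1$ in the sum carrying the $B^*_{i+1}$ terms. Collecting the coefficient of each $B^*_i(x)$ then gives $w_ia_i+(1-w_i)a_{i-1}$, which is exactly $a^*_i$. The only points needing attention are the ends $i=1$ and $i=n+1$, where $a_0$ and $a_{n+1}$ are undefined; here the weight conventions set $w_1=1$ and $w_{n+1}=0$ (the band of fractional weights lies strictly in the interior, since for the open vectors considered $t_{1+p}=\xi_1\le t^*$ and $t_{n+1}=\xi_m>t^*$), so the terms carrying the undefined coefficients drop out. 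This step is purely algebraic.

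\textbf{Step 2 (the refinement relation).} I would prove the displayed relation by induction on the degree $q=0,1,\dots,p$, writing $B^{(q)}_i$ and $B^{*(q)}_i$ for the degree-$q$ B-splines on $\Xi$ and $\Xi^*$ and $w^{(q)}_i$ for the corresponding weights. The base case $q=0$ is combinatorial: a degree-$0$ B-spline is the indicator of a knot interval, and the inserted $t^*\in[t_{m-1},t_m)$ either leaves an interval intact (so $B^{(0)}_i$ equals a single $B^{*(0)}_j$) or splits $[t_{m-1},t_m)$ into two subintervals (so $B^{(0)}_i$ equals the sum of two consecutive refined indicators); in either case the coefficients are $0$ or $1$, matching the general relation under the degenerate-weight convention. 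For the inductive step I expand $B^{(q)}_i$ and the two candidates $B^{*(q)}_i$, $B^{*(q)}_{i+1}$ by the recurrence \eqref{recurrence relation,2}, substitute the degree-$(q-1)$ relation supplied by the hypothesis, and regroup the surviving degree-$(q-1)$ refined splines. Matching coefficients reduces to an identity among the rational weights, which I verify using the definition of $w^{(q)}_i$ together with the knot correspondence $t^*_i=t_i$ $(i<m)$, $t^*_m=t^*$, $t^*_i=t_{i-1}$ $(i>m)$ that links the two sequences.

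\textbf{Main obstacle.} The principal difficulty is bookkeeping rather than conceptual: tracking the index shift by one for $i\ge m$, handling degenerate denominators $t_{i+q}-t_i=0$ produced by repeated knots (where the offending term and its weight are set to zero by convention), and checking that the two blending weights contributed by consecutive degrees telescope to the single weight $w^{(q)}_i$ at degree $q$. Once the conventions for zero-length spans and out-of-range indices are fixed, the inductive identity among the weights is elementary, and no tool beyond the recurrence is required.
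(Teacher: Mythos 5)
Your proposal is a correct strategy, but note that the paper itself offers no proof of this lemma at all: it is quoted as a classical result with a citation to de Boor, so there is nothing in the text to compare against step by step. What you propose is the standard textbook argument (Boehm's knot-insertion formula): establish the local two-scale relation $B_i = w_i B^*_i + (1-w_{i+1})B^*_{i+1}$ by induction on the degree via the recurrence \eqref{recurrence relation,1}--\eqref{recurrence relation,2}, then sum against the coefficients and shift indices to obtain \eqref{eq:identity}. Your handling of the endpoints is right for the open knot vectors the paper actually uses ($t_{1+p}=\xi_1<t^*$ forces $w_1=1$ and $t_{n+1}=\xi_m>t^*$ forces $w_{n+1}=0$, so the undefined $a_0$ and $a_{n+1}$ never appear), though you should be aware the lemma is stated for general nondecreasing knot vectors, where the degenerate case $t^*=t_1$ makes the case split in the definition of $w_i$ overlap and needs a convention. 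One structural remark worth flagging: in the proof of Theorem~\ref{theorem, generate Bspline} the paper derives exactly your local relation, equation \eqref{eq,generation process B}, \emph{from} Lemma~\ref{lemma,knot insertion} by comparing coefficients of arbitrary $a_i$; your argument runs the implication in the opposite direction, proving the local relation first and deducing the global identity. Both directions are valid, but if you intend your proof to coexist with the paper's development you should make clear that \eqref{eq,generation process B} is then a restatement of your Step~1 input rather than a new consequence, to avoid the appearance of circularity. The inductive step you leave as "an identity among the rational weights" is the only place where real work remains; it is elementary but must be written out with the index shift $t^*_i=t_{i-1}$ for $i>m$ and the zero-denominator conventions made explicit.
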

	
	For an open knot vector $ \Xi $, the knot insertion process does not result in any changes to the basis functions whose support excludes the newly inserted knot. In particular, when the inserted knot increases the multiplicity number of a knot in $ \Xi $, the B-spline basis functions which are modified at the right end of the identity \eqref{eq:identity} are of a lower order of smoothness.  As demonstrated in literature \cite{Borden2011}, inserting (repeating) knots as many times as is necessary can ensure that the modified B-spline basis functions become Bernstein basis functions. In this manner, each B-spline basis function can be represented by a linear combination of Bernstein basis functions, which possess local explicit forms. This process of decomposition is referred to as \textit{B\'{e}zier extraction}. In numerical computations, the utilisation of B\'{e}zier extraction technique to obtain B-spline basis functions can be more easily incorporated into existing finite element codes than the traditional recurrence relation \eqref{recurrence relation,1}-\eqref{recurrence relation,2}.
	
	
	\subsection{Element-based construction of B-spline spaces}\label{subsec1.2}
	We consider the following open knot vector:
	\begin{equation}\label{eq:knot vector}
		\Xi^{*} =\{ r_1\xi_{1},\xi_{1,1},...,\xi_{1,p-1},r_2\xi_{2},...,r_{m-1}\xi_{m-1},\xi_{m-1,1},...,\xi_{m-1,p-1},r_m\xi_{m} \},
	\end{equation}
	where $ r_1=r_m=p+1 $ and $  r_2,...,r_{m-1}\leq p  $.
	Here, for $ j=1,...,m-1 $, the knots $ \xi_{j,1},...,\xi_{j,p-1} $ equalize the interval $ (\xi_{j},\xi_{j+1}) $ into $ p $ subintervals.  Indeed, it is possible to convert any knot vector $ \Xi $ to the above form by inserting appropriate knots. It can be deduced from Lemma~\ref{lemma,knot insertion} that, for any function in the B-spline basis function space $ \mathcal{S}_{p}^{\boldsymbol{k}}(\Xi) $, there is a  corresponding function in $ \mathcal{S}_{p}^{\boldsymbol{k}^{*}}(\Xi^{*}) $ such that they have same values everywhere. Without causing confusion, we will subsequently refer to $ \Xi^* $ by $ \Xi $. 
	
	In the following, we proceed with the construction of $ \mathcal{S}_{p}^{\boldsymbol{k}}(\Xi) $ through the B-spline elements, exhibiting significant parallels  to the formulation of the finite element spaces. The basis functions of $ \mathcal{S}_{p}^{\boldsymbol{k}}(\Xi) $ will be represented as linear combinations of B-spline element basis functions. This process of linear combinations is analogous to B\'{e}zier extraction  in that they are both special cases of knot insertion.
	
	For $ j=1,...,m-1 $, let $ K_j:=(\xi_{j},\xi_{j+1}) $ and $ h_j:= \xi_{j+1}-\xi_{j} $. Then $ \mathcal{T}_{h}:=\left\lbrace K_{j}\right\rbrace_{j=1}^{m-1} $ is a partition of $\langle \Xi \rangle $ with $h=\max{h_{j}}$. Denote the open knot vectors 
	\begin{eqnarray*}
		\Xi_{K_j}:=\{(p+1)\xi_{j,0},\xi_{j,1},...,\xi_{j,p-1},(p+1)\xi_{j,p} \},\quad  j=1,...,m-1,
	\end{eqnarray*}
	where $ \xi_{j,0}: = \xi_j $ and $ \xi_{j,p}: = \xi_{j+1} $.
	Let $ T_{K_j,i} $ be the $ i $th knot vector consisting of $ p+2 $ consecutive knots selected from  $ \Xi_{K_j} $, i.e.,
	\begin{eqnarray}\label{def:element knot basis 1}
		T_{K_j,i}:=\left\lbrace(p+2-i)\xi_{j,0},\xi_{j,1},..,\xi_{j,i}\right\rbrace, \quad  i=1,...,p,
	\end{eqnarray}
	and 
	\begin{eqnarray}\label{def:element knot basis 2}
		T_{K_j,i}:=\left\lbrace\xi_{j,i-p-1},..,\xi_{j,p-1},(i-p+1)\xi_{j,p}\right\rbrace, \quad i=p+1,...,2p.
	\end{eqnarray}
	It is then readily to determine the corresponding local B-spline basis functions 
	\begin{equation}\label{eq:element B-spline}
		\mathcal{B}_{p}(\Xi_{K_j}):=\{ B_{K_j,i},i=1,...,2p\},\quad  j=1,...,m-1,
	\end{equation}
	where $  B_{K_j,i} $ is obtained by $ T_{K_j,i}$  using  the  recurrence relation \eqref{recurrence relation,1}-\eqref{recurrence relation,2}, and the spaces
	\begin{equation*}
		\mathcal{S}_{p}^{\boldsymbol{k}}(\Xi_{K_j}):= \text{span}\{\mathcal{B}_{p}(\Xi_{K_j})\},\quad  j=1,...,m-1.
	\end{equation*}
	We call the triple $(K_j,\mathcal{S}_{p}^{\boldsymbol{k}}(\Xi_{K_j}),\mathcal{B}_{p}(\Xi_{K_j}))  $  a B-spline element of degree $ p $. The smoothness property of the  B-spline element basis functions $ \mathcal{B}_{p}(\Xi_{K_j}) $ at the distinctive knots of $ \Xi_{K_j}$ can be summarised as follows.
	\begin{itemize}
		\item For $ 1\leq i\leq p $, $ B_{K_j,i}(x) $  has $ C^{i-2} $ smoothness across $ \xi_{j,0} $ and $ C^{p-1} $ smoothness across other knots of $ \Xi_{K_j}$;
		\item For $ p+1\leq i\leq 2p $, $ B_{K_j,i}(x)  $ has $ C^{2p-1-i} $ smoothness across $ \xi_{j,p} $ and $ C^{p-1} $ smoothness across other knots of $ \Xi_{K_j}$.
	\end{itemize}
	For the cases $ p=1,2,3,4 $, the B-spline basis functions of a B-spline element  are illustrated in Fig.~\ref{figse}.
	\begin{figure}[htbp]
		\centering
		\subfigure[$ p=1 $]{
			\begin{minipage}{.45\textwidth}
				\centering
				\includegraphics[width=140pt]{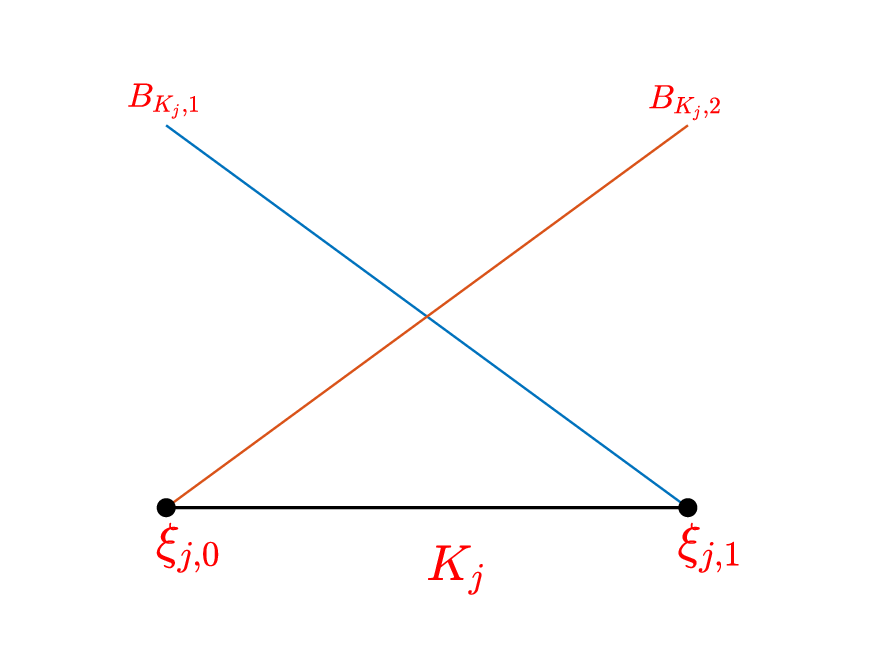}
				\label{figse1}
			\end{minipage}
		}
		\subfigure[$ p=2 $]{
			\begin{minipage}{.45\textwidth}
				\centering
				\includegraphics[width=140pt]{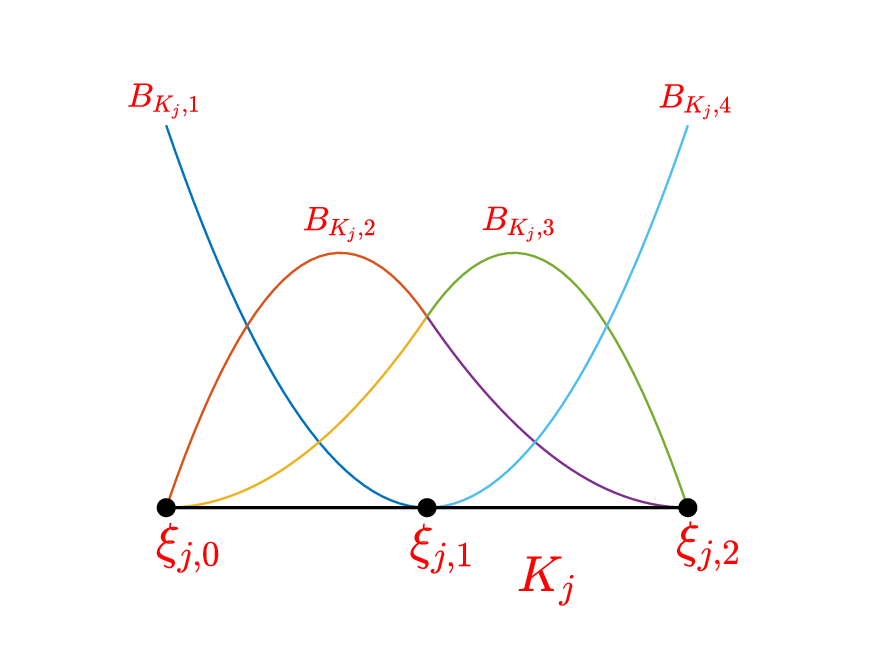}
				\label{figse2}
			\end{minipage}
		}\\
		\subfigure[$ p=3$]{
			\begin{minipage}{.45\textwidth}
				\centering
				\includegraphics[width=140pt]{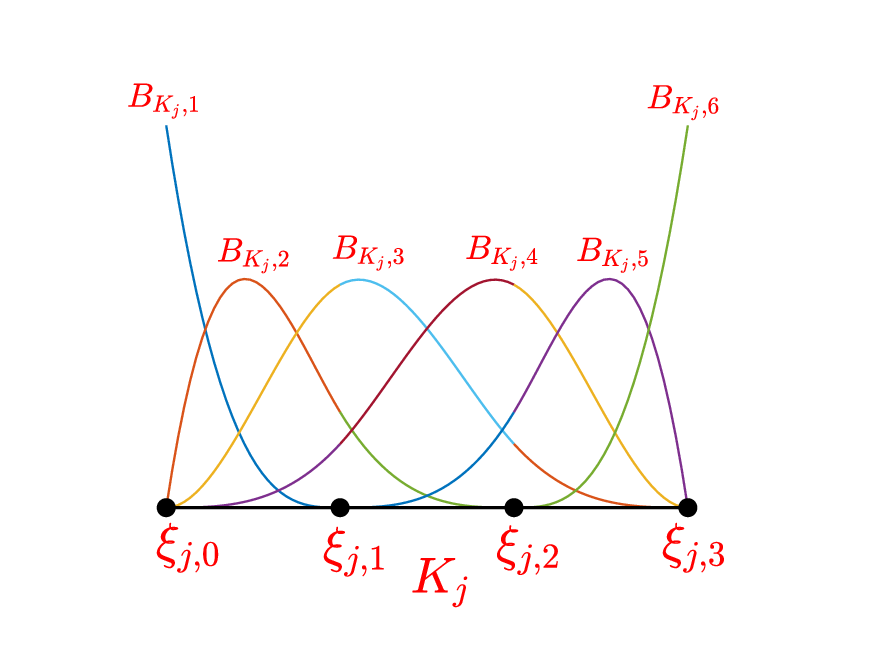}
				\label{figse3}
			\end{minipage}
		}
		\subfigure[$ p=4 $]{
			\begin{minipage}{.45\textwidth}
				\centering
				\includegraphics[width=140pt]{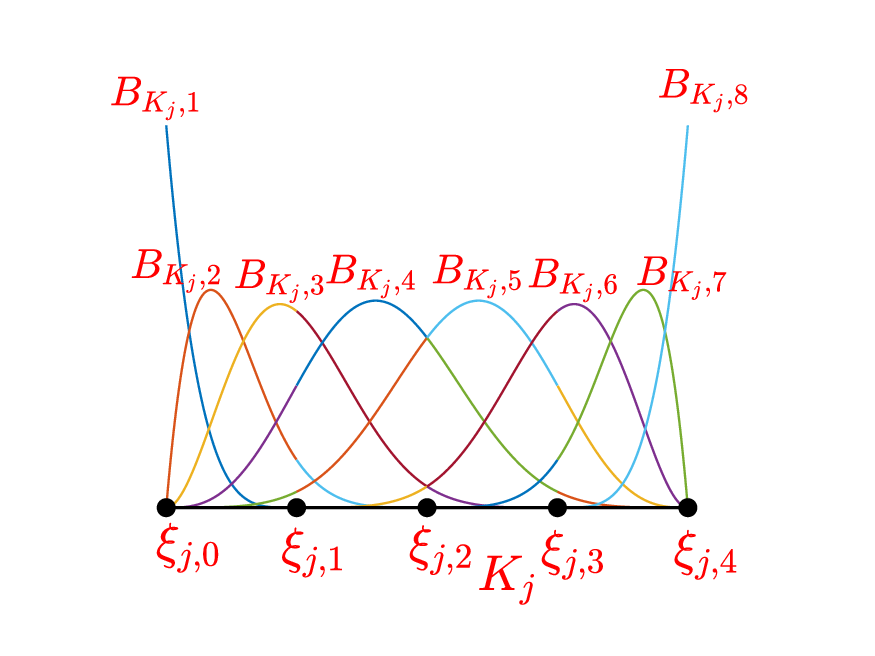}
				\label{figse4}
			\end{minipage}
		}
		\caption{Illustration of basis functions for the B-spline element $(K_j,\mathcal{S}_{p}^{\boldsymbol{k}}(\Xi_{K_j}),\mathcal{B}_{p}(\Xi_{K_j}))  $.}\label{figse}
	\end{figure} 
	
	With respect to $ \mathcal{T}_{h} $, the objective here is to derive $ \mathcal{B}_{p}(\Xi) $ using $ \mathcal{B}_{p}(\Xi_{K_j}) $. To this end, we consider the combinations of  B-spline element basis functions as the inverse process of knot insertion in Lemma~\ref{lemma,knot insertion}, aiming to derive B-splines with higher smoothness. We have the following Theorem~\ref{theorem, generate Bspline}.
	\begin{theorem}\label{theorem, generate Bspline}
		Let $ K_{j-1} $ and $ K_{j} $ be two neighboring elements, and consider the  B-spline element basis functions $ \{B_{K_{j-1},i}(x)\} _{i=p+1}^{2p}  $ and $ \{B_{K_{j},i}(x)\} _{i=1}^{p}  $  of associated B-spline elements. By setting
		\begin{eqnarray*}
			B_{m,0,j}(x) = B_{K_{j-1},2p-m}(x), \quad B_{m,m+1,j}(x) = B_{K_{j},m+1}(x) \quad \forall 0\leq m \leq p-1,
		\end{eqnarray*}
		and using the recurrence relation (see Fig.~\ref{fig: recurrence relation})
		\begin{equation}\label{eq,recurrence relation smoothness}
			\begin{split}
				B_{m,n,j}(x) =H_{m,n-1,j}&B_{m-1,n-1,j}(x)+(1-H_{m,n,j})B_{m-1,n,j}(x)\,\,\,\\ &\forall 1\leq n \leq m, 1\leq m \leq p,
			\end{split}
		\end{equation}
		where 
		\begin{eqnarray*}
			H_{m,n,j} = \frac{(m-n)h_{j-1}}{(m-n)h_{j-1}+nh_{j}},
		\end{eqnarray*}
		we have the resulting functions
		\begin{eqnarray*}
			B_{m,n,j}(x)\in C^{m-1}(K_{j-1}\cup K_{j}) \quad \forall 1\leq n \leq m, \,1\leq m \leq p,
		\end{eqnarray*}
		which are B-splines.
	\end{theorem}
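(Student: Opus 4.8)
\section*{Proof proposal}

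The plan is to prove, by induction on $m$, the sharper statement that each $B_{m,n,j}$ is precisely the normalized B-spline whose $p+2$ consecutive knots are
\[
\mathbf{s}^{(m,n)}:=\bigl(\xi_{j-1,p-m-1+n},\ldots,\xi_{j-1,p-1},\underbrace{\xi_j,\ldots,\xi_j}_{p-m+1},\xi_{j,1},\ldots,\xi_{j,n}\bigr),
\]
in which $\xi_j$ occurs with multiplicity exactly $p-m+1$. Granting this identification, the asserted regularity is immediate: $B_{m,n,j}$ is polynomial on each subinterval and meets its neighbours with $C^{p-1}$ continuity at the simple interior knots $\xi_{j-1,l},\xi_{j,l}$, whereas at $\xi_j$ the multiplicity $p-m+1$ forces exactly $C^{p-(p-m+1)}=C^{m-1}$ continuity; since $m-1\le p-1$ this is the governing regularity on $K_{j-1}\cup K_j$, and $B_{m,n,j}$ is by construction a B-spline.

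For the base of the induction I would check the two families supplied as initial data. Reading off the element knot vectors \eqref{def:element knot basis 1}--\eqref{def:element knot basis 2}, the function $B_{m,0,j}=B_{K_{j-1},2p-m}$ carries exactly the knots $\mathbf{s}^{(m,0)}$ and $B_{m,m+1,j}=B_{K_j,m+1}$ carries $\mathbf{s}^{(m,m+1)}$; these are the endpoints $n=0$ and $n=m+1$ of each level, so they already have the claimed form, and the level $m=0$ consists solely of such boundary data.

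The heart of the argument is to recognise the recurrence \eqref{eq,recurrence relation smoothness} as the single-knot-insertion splitting of one B-spline into two. Dualizing the coefficient identity \eqref{eq:identity} of Lemma~\ref{lemma,knot insertion} yields the standard decomposition of a single normalized B-spline under insertion of a knot $\hat t$ with $\tau_\nu\le\hat t<\tau_{\nu+1}$,
\[
B[\tau_0,\ldots,\tau_{p+1}]=\frac{\hat t-\tau_0}{\tau_p-\tau_0}\,B[\tau_0,\ldots,\tau_{\nu},\hat t,\tau_{\nu+1},\ldots,\tau_{p}]+\frac{\tau_{p+1}-\hat t}{\tau_{p+1}-\tau_1}\,B[\tau_1,\ldots,\tau_{\nu},\hat t,\tau_{\nu+1},\ldots,\tau_{p+1}].
\]
Applying this with $\tau=\mathbf{s}^{(m,n)}$ and $\hat t=\xi_j$ raises the multiplicity of $\xi_j$ to $p-m+2$ and splits the two resulting windows into exactly the knot vectors $\mathbf{s}^{(m-1,n-1)}$ (left) and $\mathbf{s}^{(m-1,n)}$ (right). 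It then remains to match the weights. Exploiting that the element knots are equispaced, $\xi_{j-1,l}=\xi_{j-1}+l\,h_{j-1}/p$ and $\xi_{j,l}=\xi_j+l\,h_j/p$, an explicit evaluation of $\tau_0,\tau_1,\tau_p,\tau_{p+1}$ for $\mathbf{s}^{(m,n)}$ gives $\frac{\hat t-\tau_0}{\tau_p-\tau_0}=H_{m,n-1,j}$ and $\frac{\tau_{p+1}-\hat t}{\tau_{p+1}-\tau_1}=1-H_{m,n,j}$, which are precisely the coefficients in \eqref{eq,recurrence relation smoothness}. Assuming the level-$(m-1)$ functions are the asserted B-splines, this identifies $B_{m,n,j}$ for $1\le n\le m$; together with the boundary data of the preceding paragraph, the induction closes.

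I expect the main obstacle to lie in the bookkeeping of this weight computation rather than in any analytic difficulty. The knots $\tau_1$ and $\tau_p$ of $\mathbf{s}^{(m,n)}$ collapse onto $\xi_j$ in the extreme subcases $n=m$ and $n=1$ respectively (via the conventions $\xi_{j-1,p}=\xi_j=\xi_{j,0}$), so the closed forms $\tau_1=\xi_{j-1,p-m+n}$, $\tau_p=\xi_{j,n-1}$ must be read with care there to confirm that the weights degenerate correctly to $1-H_{m,m,j}=1$ and $H_{m,0,j}=1$; away from these subcases the two knots sit in the interior element meshes and the equispacing delivers the stated fractions directly.
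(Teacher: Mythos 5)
Your proposal is correct and follows essentially the same route as the paper: both reduce the recurrence \eqref{eq,recurrence relation smoothness} to the knot-insertion identity of Lemma~\ref{lemma,knot insertion} specialized to raising the multiplicity of $\xi_j$ by one, and iterate from multiplicity $p+1$ down to $p-m+1$ (the paper's formula \eqref{eq,generation process B} with $w_i=\frac{(p+1-i)h_{j-1}}{(i-r)h_j+(p+1-i)h_{j-1}}$ is exactly your single-B-spline splitting with $i=r+n$, $m=p-r+1$). Your explicit identification of each $B_{m,n,j}$ with the B-spline on the knot window $\mathbf{s}^{(m,n)}$, and the verification of the degenerate weights at $n=1$ and $n=m$, correctly fills in the bookkeeping that the paper compresses into the closing sentence ``in consideration of the combination process from $r=p$ to $r=1$.''
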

	\begin{figure}[htbp]
		\centering
		\includegraphics[width=240pt]{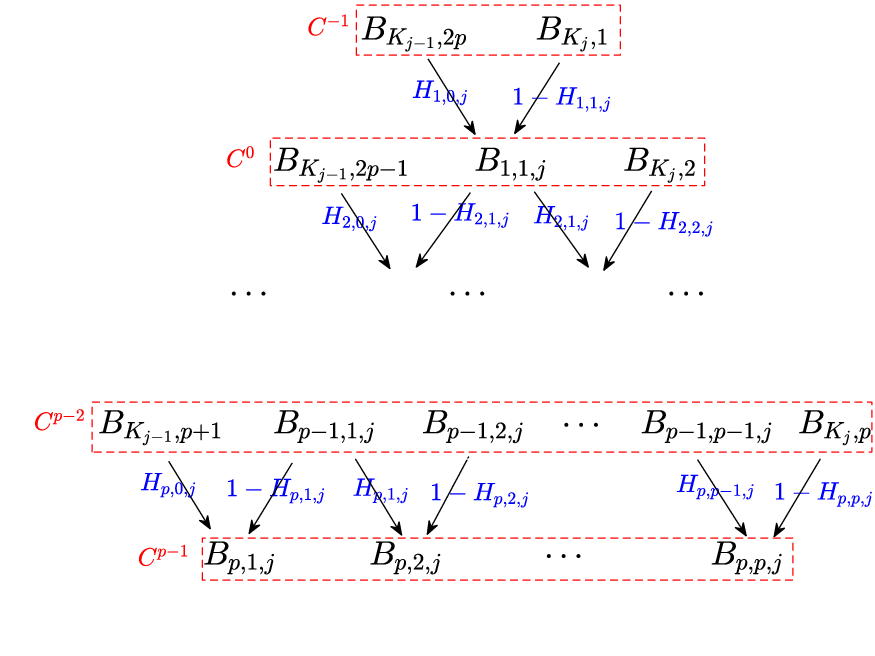}
		\caption{Recurrence relation via B-spline elements}
		\label{fig: recurrence relation}
	\end{figure}
	\begin{proof}
		For a positive integer $ r\leq p $, consider the knot vector $ \Xi_{r}:=\{t_i\}_{i=1}^{2p+r} $ which is dependent on $ r $, with
		\begin{eqnarray*}
			(\xi_{j-1}=)t_1<...<t_{p}<t_{p+1}=...=t_{p+r}(=\xi_{j})<t_{p+r+1}<...<t_{2p+r}(=\xi_{j+1}),
		\end{eqnarray*}
		and the refined knot vector $ \Xi^{*}_{r}:=\{t^*_i\}_{i=1}^{2p+r+1} $, dependent on $ r $, with
		\begin{eqnarray*}
			t^{*}_1<...<t^{*}_{p}<t^{*}_{p+1}=...=t^{*}_{p+r+1}<t^{*}_{p+r+2}<...<t^{*}_{2p+r+1},
		\end{eqnarray*}
		where $ t^{*}_{p+r+1} $ is the  inserted knot. Here, $ \{t_i\}_{i=1}^{p+1} $ (or $ \{t^*_i\}_{i=1}^{p+1} $) and $ \{t_i\}_{i=p+r}^{2p+r} $ (or $ \{t^*_i\}_{i=p+r+1}^{2p+r+1} $) are equidistant knots in $ K_{j-1} $ and $ K_{j} $, i.e.,
		\begin{equation*}
			t_{i+1}- t_{i} = \frac{h_{j-1}}{p}, \quad  t_{p+r+i}- t_{p+r+i-1} = \frac{h_{j}}{p},\quad\,\,\, i = 1,2,...,p.
		\end{equation*}
		Let $ \{B_i(x)\}_{i=1}^{p+r-1} $ and $ \{B^*_i(x)\}_{i=1}^{p+r} $ be the B-spline sequences for  $ \Xi_{r} $ and  $ \Xi_{r}^* $, respectively. It follows from Lemma~\ref{lemma,knot insertion} that
		\begin{eqnarray}\label{eq: equi S and S*}
			\sum_{i=1}^{p+r-1}a_iB_i=\sum_{i=1}^{r}a_iB^*_i+\sum_{i=r+1}^{p}\big((1-w_i)a_{i-1}+w_ia_i\big)B^*_i+\sum_{i=p+1}^{p+r}a_{i-1}B^*_i,
		\end{eqnarray}
		where, for $ i=r+1,r+2,...,p $,
		\begin{eqnarray*}
			w_i = \frac{t_{p+1}-t_i}{t_{p+i}-t_i}=\frac{t_{p+1}-t_i}{(t_{p+i}-t_{p+r})+(t_{p+1}-t_i)}=\frac{(p+1-i)h_{j-1}}{(i-r)h_{j}+(p+1-i)h_{j-1}}.
		\end{eqnarray*}
		Note that, for $ 1\leq i\leq r-1 $, the B-splines $ B_i $ and $ B_i^* $ are defined by the same knot vectors $ \left\lbrace t_{i},t_{i+1},...,t_{p+i+1} \right\rbrace   $, and for $ p+1\leq i\leq p+r-1 $, the  B-splines $ B_{i} $ and $ B_{i+1}^* $ are defined by the same knot vectors $ \left\lbrace t_{i},t_{i+1},...,t_{p+i+1} \right\rbrace   $. Thus, \eqref{eq: equi S and S*} is equivalent to
		\begin{eqnarray*}
			\sum_{i=r}^{p}a_iB_i=a_rB^*_r+\sum_{i=r+1}^{p}\big((1-w_i)a_{i-1}+w_ia_i\big)B^*_i+a_{p}B^*_{p+1}.
		\end{eqnarray*}
		Given that the coefficients $ a_i $ are arbitrary, a comparison of the two sides of the above equation yields 
		\begin{equation}\label{eq,generation process B}
			B_i = w_{i}B^*_i+(1-w_{i+1})B^*_{i+1}\quad \forall r\leq i\leq p.
		\end{equation}
		This formula can be regarded as a combination of $ p+2-r $ functions $ B_i^*(x) $ $ (r\leq i\leq p+1) $, each having $ C^{p-1-r} $ smoothness across $ t_{p+r+1}^* $. These functions collectively generate $ p+1-r $ functions $ B_i(x) $ $ (r\leq i\leq p) $, each having $ C^{p-r} $ smoothness across $ t_{p+r+1}^* $. In consideration of the combination process from $ r=p $ to $ r=1 $, the proof of Theorem~\ref{theorem, generate Bspline} can be derived.
	\end{proof}
	\begin{remark}
		Note that the recurrence relation \eqref{eq,recurrence relation smoothness}  involves only the second half of B-spline element basis functions  $\{B_{K_{j-1},i}(x)\}_{i=p+1}^{2p}$ on $ K_{j-1}$ and the first half of B-spline element basis functions  $\{B_{K_{j},i}(x)\}_{i=1}^{p}$ on $ K_{j}$, which do not have optimal $ C^{p-1} $ smoothness across the common endpoint of $ K_{j-1} $ and $ K_j $.
	\end{remark}
	In the proof of Theorem~\ref{theorem, generate Bspline}, the B-spline sequence defined on $ \Xi_{r} $  shall hereafter be denoted  by $ \mathcal{B}_{\xi_j,r} $, i.e., (see Fig.~\ref{fig: recurrence relation})
	\begin{equation*}
		\mathcal{B}_{\xi_j,r}:=	\{B_{K_{j-1},p+1},...,B_{K_{j-1},p+r-1},B_{p-r+1,1,j},...,B_{p-r+1,p-r+1,j},B_{K_{j},p-r+2},...,B_{K_{j},p} \}.
	\end{equation*}
	Clearly,  the B-splines of  $ \mathcal{B}_{\xi_j,r} $ exhibit a range of at least $ C^{p-r} $ and at most $ C^{p-1} $ smoothness across the common knot $ \xi_j $ of elements $ K_{j-1} $ and $ K_{j} $. We introduce two virtual elements $K_{0}:=(\xi_1,\xi_1)$ and  $K_{m}: =(\xi_{m},\xi_{m})$ with the lengths $h_{0}=h_{m}=0$. This ensures that every knot can be described as the common knot of a pair of elements. It is now possible to represent the B-spline basis functions of $ \mathcal{S}_p^{\boldsymbol{k}}(\Xi) $ in the following manner
	\begin{equation}\label{eq:element-based basis}
		\mathcal{B}_p(\Xi) = \{\mathcal{B}_{\xi_1,r_1},\mathcal{B}_{\xi_2,r_2},...,\mathcal{B}_{\xi_m,r_m}  \},
	\end{equation}
	where each basis function is a linear combination of B-spline element basis functions, with coefficients being related only to the lengths of elements.
	It is easy to find that the dimension of $ \mathcal{S}_p^{\boldsymbol{k}}(\Xi) $ is
	\begin{equation*}
		\dim(\mathcal{S}_p^{\boldsymbol{k}}(\Xi)) = (p+r_1-1) +...+(p+r_m-1) = m(p-1)+\sum_{a=1}^{m}r_a.
	\end{equation*}
	In the case of $ r_2=...=r_{m-1}=1 $, we have $ \mathcal{S}_p^{\boldsymbol{k}}(\Xi)\subset C^{p-1} (\langle \Xi \rangle) $.
	We also denote  by $ \mathcal{S}_p^{p-1}(\mathcal{T}_{h}) $ such a $ C^{p-1} $ finite element space and $ \mathcal{B}_p(\mathcal{T}_{h}) $ the set of B-spline basis functions.
	
	\subsection{Explicit representations of B-spline element bases}\label{subsec1.3}
	The objective here is to derive the explicit representations of B-spline element basis functions $ \mathcal{B}_{p}(\Xi_{K_j}) $ given by \eqref{eq:element B-spline}. Let
	\begin{equation*}
		\lambda_1(x) = \frac{\xi_{j,p} -x}{h_j},\quad\quad \lambda_2(x) = \frac{x-\xi_{j,0} }{h_j} \quad \forall x\in K_j.
	\end{equation*}
	Then the knots $ \xi_{j,0},\xi_{j,1},...,\xi_{j,p} $  can be represented as
	\begin{equation*}
		\xi_{j,s}:\,s\lambda_1-(p-s)\lambda_2=0,\quad 0\leq s\leq p.
	\end{equation*}
	Set
	\begin{equation}\label{def:P}
		P_{p,s}(\lambda_1,\lambda_2): = s\lambda_1-(p-s)\lambda_2,\quad 0\leq s\leq p,
	\end{equation}
	and let $ I_s := (\xi_{j,s} , \xi_{j,s+1}) $ $ (0 \leq s \leq p-1) $ be the subintervals of $ K_j $. It is expected that the B-spline basis function $ B_{K_j,i}(\cdot) $ on each subinterval $ I_s $ will be uniquely presented in the following manner
	\begin{eqnarray}\label{eq,expression of Sij}
		B_{K_j,i}(\lambda_1,\lambda_2)|_{I_{s}}=\sum_{m=0}^{p}c_{s,m}\left( P_{p,m}(\lambda_1,\lambda_2)\right) ^p,\quad 0 \leq s \leq p-1,
	\end{eqnarray}
	where $ c_{s,m} $ are some constants to be determined.
	Prior to exhibiting the representation \eqref{eq,expression of Sij} in question, it is necessary to undertake some preliminary preparation. 
	
	We first introduce the stirling numbers of the second kind \cite[Chapter 6]{Ronald1994}, for any nonnegative integers $ i,s $,
	\begin{eqnarray}\label{eq,stirling numbers of the second kind}
		\begin{Bmatrix}
			s \\
			i
		\end{Bmatrix} = \sum_{m=0}^{i}\frac{(-1)^{i-m}m^s}{(i-m)!m!},
	\end{eqnarray}
	which stands for the number of ways to partition a set of $ s $ things into $ i $ nonempty subsets. We have the following Lemma~\ref{lemma,stirling numbers}.
	\begin{lemma}\label{lemma,stirling numbers}
		For the  stirling numbers of the second kind \eqref{eq,stirling numbers of the second kind}, there holds
		\begin{eqnarray*}
			\begin{Bmatrix}s \\i\end{Bmatrix}=\left\{
			\begin{aligned}
				&0,& 0\leq s\leq i-1,\\
				&1,& s=i.
			\end{aligned}
			\right. 
		\end{eqnarray*}
	\end{lemma}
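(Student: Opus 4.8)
The plan is to reduce the defining sum \eqref{eq,stirling numbers of the second kind} to a finite-difference identity and then invoke the classical fact that the $i$-th forward difference annihilates low-degree polynomials. First I would clear the factorials in the summand: since $\frac{1}{(i-m)!\,m!}=\frac{1}{i!}\binom{i}{m}$, the definition becomes
\[
\begin{Bmatrix}s\\i\end{Bmatrix}=\frac{1}{i!}\sum_{m=0}^{i}(-1)^{i-m}\binom{i}{m}\,m^{s}.
\]
The inner sum is precisely the $i$-th forward difference $\Delta^{i}f$ of $f(x)=x^{s}$ evaluated at $x=0$, where $\Delta f(x):=f(x+1)-f(x)$ and, by induction, $\Delta^{i}f(x)=\sum_{m=0}^{i}(-1)^{i-m}\binom{i}{m}f(x+m)$. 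Here $m^{s}$ is read with the convention $0^{0}=1$, consistent with $f(x)=x^{s}$ at $x=0$.

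The key step is the elementary observation that $\Delta$ lowers the degree of a polynomial by exactly one while multiplying its leading coefficient by the old degree, since $\Delta(x^{n})=(x+1)^{n}-x^{n}=n\,x^{n-1}+(\text{lower order terms})$. Iterating this, for $0\le s\le i-1$ the polynomial $x^{s}$ has degree strictly less than $i$, so $i$ successive applications of $\Delta$ produce the zero polynomial; hence the inner sum vanishes and $\begin{Bmatrix}s\\i\end{Bmatrix}=0$. For $s=i$, applying $\Delta$ a total of $i$ times to $x^{i}$ successively strips off the factors $i,\,i-1,\dots,1$, leaving the constant $i!$; dividing by $i!$ then yields $\begin{Bmatrix}i\\i\end{Bmatrix}=1$.

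I do not expect a genuine obstacle here; the only point requiring care is the leading-coefficient bookkeeping in the degree-reduction claim, which I would make precise by a short induction on $k$ showing that $\Delta^{k}(x^{n})$ is a polynomial of degree $n-k$ with leading coefficient $n(n-1)\cdots(n-k+1)$ when $k\le n$, and is identically zero when $k>n$. Alternatively, one could bypass the algebra entirely by appealing to the combinatorial meaning noted after \eqref{eq,stirling numbers of the second kind}: a set of $s$ elements admits no partition into $i$ nonempty blocks when $s<i$, and exactly one partition (all singletons) when $s=i$. I would prefer the finite-difference route, since it argues directly from the given closed form rather than presupposing the combinatorial interpretation.
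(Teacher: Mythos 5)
Your proof is correct. Note, however, that the paper itself offers no proof of this lemma at all: it simply states the identity as a known property of the Stirling numbers of the second kind, relying on the citation to \cite[Chapter 6]{Ronald1994} and on the combinatorial interpretation (the number of partitions of an $s$-set into $i$ nonempty blocks) recalled immediately before the lemma, from which both cases are immediate. Your finite-difference argument is therefore not a restatement of the paper's reasoning but a self-contained derivation directly from the closed form \eqref{eq,stirling numbers of the second kind}: rewriting the sum as $\frac{1}{i!}\Delta^{i}(x^{s})\big|_{x=0}$ and using that $\Delta$ reduces degree by one with leading coefficient multiplied by the old degree is standard and airtight, including the $0^{0}=1$ convention you flag and the leading-coefficient induction you propose to make precise. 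What your route buys is independence from the combinatorial interpretation (which the paper asserts but does not verify against the formula); what the paper's implicit route buys is brevity, since the partition-counting statement makes both cases ($s<i$ gives no partitions, $s=i$ gives only the all-singletons partition) one-line observations. Either is acceptable; yours is the more rigorous choice given that the lemma is used purely as an algebraic identity in the proofs of Lemma~\ref{lemma,expansion} and Theorem~\ref{lemma,expression of Sij}.
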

	Then, a special expansion of $ (\lambda_1+\lambda_2)^p $ is provided in  Lemma~\ref{lemma,expansion}.
	\begin{lemma}\label{lemma,expansion}
		We have the following expansion
		\begin{eqnarray}\label{eq,expansion}
			(\lambda_1+\lambda_2)^{p}=\sum_{m=0}^{p}\frac{(-1)^{p-m}}{(p-m)!m!}\left( P_{p,m}(\lambda_1,\lambda_2)\right)^{p} ,
		\end{eqnarray}
		where $ P_{p,m}(\lambda_1,\lambda_2) $ is given by \eqref{def:P}.
	\end{lemma}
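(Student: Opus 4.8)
The plan is to reduce the identity to the defining property of the Stirling numbers recorded in Lemma~\ref{lemma,stirling numbers}. The crucial observation is that the $m$-dependence of $P_{p,m}$ becomes \emph{linear} after a change of variables: writing $u:=\lambda_1+\lambda_2$ and $v:=\lambda_2$, the definition \eqref{def:P} gives $P_{p,m}(\lambda_1,\lambda_2)=m\lambda_1-(p-m)\lambda_2=mu-pv$. Since the left-hand side of \eqref{eq,expansion} is simply $u^p$, it suffices to show that $\sum_{m=0}^{p}\frac{(-1)^{p-m}}{(p-m)!\,m!}(mu-pv)^p=u^p$ as a polynomial identity in $u$ and $v$. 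First I would expand each $(mu-pv)^p$ by the binomial theorem,
\[
(mu-pv)^p=\sum_{k=0}^{p}\binom{p}{k}(-p)^{p-k}\,m^k\,u^k v^{p-k},
\]
which isolates the factor $m^k$ carrying all the $m$-dependence.

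Next I would substitute this expansion into the sum over $m$ and interchange the two (finite) summations, collecting the coefficient of each monomial $u^k v^{p-k}$. This gives
\[
\sum_{m=0}^{p}\frac{(-1)^{p-m}}{(p-m)!\,m!}(mu-pv)^p
=\sum_{k=0}^{p}\binom{p}{k}(-p)^{p-k}\,u^k v^{p-k}\!\left(\sum_{m=0}^{p}\frac{(-1)^{p-m}\,m^k}{(p-m)!\,m!}\right).
\]
The inner sum over $m$ is, by the definition \eqref{eq,stirling numbers of the second kind} with upper index $p$ and power $k$, exactly the Stirling number $\begin{Bmatrix} k \\ p \end{Bmatrix}$. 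Matching these indices correctly is the one place to be careful.

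Finally I would invoke Lemma~\ref{lemma,stirling numbers}, which gives $\begin{Bmatrix} k \\ p \end{Bmatrix}=0$ for $0\leq k\leq p-1$ and $\begin{Bmatrix} p \\ p \end{Bmatrix}=1$. Hence every term with $k<p$ vanishes, and only the $k=p$ term survives, contributing $\binom{p}{p}(-p)^{0}u^p v^{0}=u^p=(\lambda_1+\lambda_2)^p$, which is precisely \eqref{eq,expansion}. I do not expect any genuine obstacle here: once the reparametrization $P_{p,m}=mu-pv$ is in hand, the argument is pure bookkeeping, and the only substantive content is the identification of the coefficient sum with a Stirling number together with its vanishing property from Lemma~\ref{lemma,stirling numbers}.
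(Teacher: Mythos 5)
Your proof is correct and follows essentially the same route as the paper: expand by the binomial theorem, interchange the two finite sums, and kill all but one term with the vanishing property of Lemma~\ref{lemma,stirling numbers}. The substitution $u=\lambda_1+\lambda_2$, $v=\lambda_2$, which makes $P_{p,m}=mu-pv$ linear in $m$, is a pleasant streamlining — each monomial coefficient is identified directly with a single Stirling number $\begin{Bmatrix} k \\ p \end{Bmatrix}$ — whereas the paper compares coefficients of $\lambda_1^i\lambda_2^{p-i}$ and therefore needs a second binomial expansion of $(m-p)^{p-i}$ before invoking the same lemma.
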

	\begin{proof}
		The desired expansion also reads
		\begin{eqnarray*}
			\sum_{i=0}^{p}C_p^i\lambda_1^i\lambda_2^{p-i}=\sum_{m=0}^{p}\frac{(-1)^{p-m}}{(p-m)!m!}\sum_{i=0}^{p}C_p^im^i(m-p)^{p-i}\lambda_1^i\lambda_2^{p-i},
		\end{eqnarray*}
		where $ C_p^i=\frac{p!}{i!(p-i)!} $. A comparison of the coefficients of $ \lambda_1^i\lambda_2^{p-i} $ on both sides of the above equation  yields
		\begin{eqnarray*}
			C_p^i = \sum_{m=0}^{p}\frac{(-1)^{p-m}}{(p-m)!m!}C_p^im^i(m-p)^{p-i},\quad i = 0,1,...,p.
		\end{eqnarray*}
		It is sufficient to demonstrate
		\begin{eqnarray*} \sum_{m=0}^{p}\frac{(-1)^{p-m}}{(p-m)!m!}m^i(m-p)^{p-i}=1,\quad i =  0,1,...,p.
		\end{eqnarray*}
		In fact, for $ i=0,1,...,p $, we have
		\begin{align*}
			\sum_{m=0}^{p}\frac{(-1)^{p-m}}{(p-m)!m!}m^i(m-p)^{p-i}&=\sum_{m=0}^{p}\frac{(-1)^{p-m}}{(p-m)!m!}m^i\sum_{j=0}^{p-i}C_{p-i}^{j}(-p)^{j}m^{p-i-j}\\
			&=\sum_{j=0}^{p-i}C_{p-i}^{j}(-p)^j\sum_{m=0}^{p}\frac{(-1)^{p-m}m^{p-j}}{(p-m)!m!}\\
			&=1,
		\end{align*} 
		where the last equation follows from  Lemma~\ref{lemma,stirling numbers}.
	\end{proof}
	
	The coefficients in the representation \eqref{eq,expression of Sij} are now ready for presentation. We end this subsection with the following Theorem~\ref{lemma,expression of Sij}.
	\begin{theorem}\label{lemma,expression of Sij}
		For	the B-spline element basis functions $ B_{K_j,i}(\cdot) $ $ (1\leq i\leq 2p) $, we have, for $ 1\leq i\leq p $,
		\begin{eqnarray}\label{eq,lemma,expression of Sij,1}
			B_{K_j,i}(\lambda_1,\lambda_2)|_{I_s} = \left\{
			\begin{aligned}
				&\sum_{m=s+1}^{i}\dfrac{(-1)^{i-m} i}{(i-m)!m!m^{p+1-i}}\left( P_{p,m}(\lambda_1,\lambda_2)\right)^{p},& 0\leq s\leq i-1,\\
				&0,& \mbox{otherwise},
			\end{aligned}
			\right. 
		\end{eqnarray}
		where $ P_{p,m}(\lambda_1,\lambda_2) $ is given by \eqref{def:P}, and  
		\begin{eqnarray}\label{eq,lemma,expression of Sij,2}
			B_{K_j,2p+1-i}(\lambda_1,\lambda_2)|_{I_{p-1-s}} = B_{K_j,i}(\lambda_2,\lambda_1)|_{I_s}\quad \forall 0\leq s \leq p-1.
		\end{eqnarray}
	\end{theorem}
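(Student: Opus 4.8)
The plan is to establish the first identity \eqref{eq,lemma,expression of Sij,1} by showing that the piecewise polynomial prescribed on its right-hand side has exactly the properties that characterize the B-spline $B_{K_j,i}$, and then to deduce \eqref{eq,lemma,expression of Sij,2} from a reflection symmetry. Throughout I would work in the coordinates of \eqref{def:P}, using $\lambda_1+\lambda_2=1$ on $K_j$, so that each $P_{p,m}$ is an affine function of $x$ with $P_{p,m}(\xi_{j,s})=m-s$; in particular $P_{p,m}$ vanishes to first order at $\xi_{j,m}$ and $P_{p,m}(\xi_{j,0})=m$. Since the $p+1$ forms $\{(P_{p,m})^{p}\}_{m=0}^{p}$ are $p$-th powers of pairwise non-proportional linear forms, they are a basis of the degree-$p$ polynomials, so the ansatz \eqref{eq,expression of Sij} is legitimate (Lemma~\ref{lemma,expansion} being the explicit expansion of the particular element $(\lambda_1+\lambda_2)^{p}$). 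The decisive structural feature of the claimed coefficients is that they are independent of $s$ apart from the lower summation limit: passing from $I_s$ to $I_{s-1}$ merely reinstates the single term $m=s$, so adjacent pieces differ by a constant multiple of $(P_{p,s})^{p}$.

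First I would dispose of support and interior smoothness, both immediate from this structure. For $s\geq i$ the sum in \eqref{eq,lemma,expression of Sij,1} is empty and gives $0$, so the candidate is supported on $[\xi_{j,0},\xi_{j,i}]=\langle T_{K_j,i}\rangle$. Across each interior knot $\xi_{j,s}$ (for $1\leq s\leq i-1$, and at $s=i$ where the right piece is zero) the jump between the two polynomial pieces is a multiple of $(P_{p,s})^{p}$, which vanishes to order $p$ at $\xi_{j,s}$; hence the candidate is $C^{p-1}$ there, as required of $B_{K_j,i}$ at a simple knot.

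The heart of the argument is the smoothness at the left endpoint $\xi_{j,0}$, where $B_{K_j,i}$ must be $C^{i-2}$ but not $C^{i-1}$. On $I_0$ the candidate equals $\sum_{m=1}^{i}\frac{(-1)^{i-m}i}{(i-m)!\,m!\,m^{p+1-i}}(P_{p,m})^{p}$, and since $\frac{dP_{p,m}}{dx}=-p/h_j$ is independent of $m$ one finds $\frac{d^{k}}{dx^{k}}(P_{p,m})^{p}\big|_{\xi_{j,0}}\propto m^{p-k}$. Factoring the $m$-independent constants leaves the sum $\sum_{m=1}^{i}\frac{(-1)^{i-m}}{(i-m)!\,m!}\,m^{\,i-1-k}$. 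For $0\leq k\leq i-2$ the exponent $i-1-k$ is positive, so the $m=0$ term vanishes and the sum equals the Stirling number $\begin{Bmatrix} i-1-k\\ i\end{Bmatrix}$, which is $0$ by Lemma~\ref{lemma,stirling numbers} because $0\leq i-1-k\leq i-1$; thus derivatives of orders $0,\dots,i-2$ vanish. For $k=i-1$ the exponent is $0$ and the omitted $m=0$ term $\tfrac{(-1)^{i}}{i!}$ (with $0^{0}=1$) is nonzero, so the sum is $-\tfrac{(-1)^{i}}{i!}\neq 0$; the $(i-1)$-st derivative therefore does not vanish and the continuity is sharp. This delicate dependence on the $m=0$ term is precisely what separates $C^{i-2}$ from $C^{i-1}$, and I expect it to be the most error-prone computation.

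These properties pin down the candidate up to a single constant $c_i$, so the remaining—and I believe hardest—task is to show $c_i=1$, i.e. that the formula reproduces the de Boor normalization rather than a multiple of it. For $i=1$ this is immediate, since \eqref{eq,lemma,expression of Sij,1} reduces to $(P_{p,1})^{p}$ with value $1$ at $\xi_{j,0}$, matching $B_{K_j,1}(\xi_{j,0})=1$. For general $i$ I would verify the partition-of-unity identity $\sum_{i=1}^{2p}\tilde B_i\equiv 1$ directly from the explicit pieces: on each subinterval the sum of the prescribed expressions should collapse, via the representation $1=(\lambda_1+\lambda_2)^{p}=\sum_{m}\frac{(-1)^{p-m}}{(p-m)!\,m!}(P_{p,m})^{p}$ of Lemma~\ref{lemma,expansion}, to the constant $1$. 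Writing $\tilde B_i=c_iB_{K_j,i}$ and invoking linear independence of the element B-splines nonzero on a given subinterval then forces every $c_i=1$. Finally, \eqref{eq,lemma,expression of Sij,2} follows without further computation from the reflection $x\mapsto \xi_{j,0}+\xi_{j,p}-x$, which fixes the symmetric knot vector $\Xi_{K_j}$, interchanges $\lambda_1\leftrightarrow\lambda_2$ and $I_s\leftrightarrow I_{p-1-s}$, and reverses the ordering $B_{K_j,i}\leftrightarrow B_{K_j,2p+1-i}$ of the element B-splines.
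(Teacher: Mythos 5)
Your proposal is correct and follows essentially the same route as the paper: both verify that the explicit piecewise formula has the minimal support, the $C^{p-1}$ interior smoothness, the $C^{i-2}$ smoothness at $\xi_{j,0}$ via the Stirling-number identity of Lemma~\ref{lemma,stirling numbers}, and the partition-of-unity normalization via Lemma~\ref{lemma,expansion}, with the symmetry relation \eqref{eq,lemma,expression of Sij,2} read off from the symmetry of the knot vectors \eqref{def:element knot basis 1}--\eqref{def:element knot basis 2}. The only substantive step you leave as ``should collapse'' --- the partition-of-unity sum --- is exactly where the paper invests its remaining effort, namely the interchange of summation and the induction proving $\sum_{i=m}^{p}\frac{(-1)^{i}i}{(i-m)!\,m^{p+1-i}}=\frac{(-1)^{p}}{(p-m)!}$.
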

	\begin{proof}
		For $ 1\leq i \leq p $, by the symmetry of knot vectors $ T_{K_j,i} $ and $ T_{K_j,2p+1-i} $ given in  \eqref{def:element knot basis 1} and \eqref{def:element knot basis 2},  it is evident that the related B-spline element basis functions satisfy the desired equation \eqref{eq,lemma,expression of Sij,2}. It remains to be stated that the equation \eqref{eq,lemma,expression of Sij,1} is true. The distinctive nature of B-spline basis functions enables us to restrict our verification to ensuring that the piecewise polynomials \eqref{eq,lemma,expression of Sij,1}  satisfy the following two properties.
		\begin{itemize}
			\item[(\textit{P1})] Each B-spline  basis function has  a minimal support with respect to a given degree and smoothness. That is, for $ 1\leq i\leq p $, the support of $ B_{K_j,i}(\cdot) $ is $ (\xi_{j,0},\xi_{j,i}) $, and  $ B_{K_j,i}(\cdot) $ has $ C^{p-1} $ smoothness acorss the knots $\xi_{j,1},...,\xi_{j,i}  $ and $ C^{i-2} $ smoothness acorss the knot $\xi_{j,0}  $.
			\item[(\textit{P2})] B-spline  basis functions on an open knot vector constitute a partition of unity. That is, $ \sum_{i=1}^{2p}B_{K_j,i}(x)=1 $ $ \forall x\in K_j $,  or, alternatively, $ \sum_{i=s+1}^{s+p+1}B_{K_j,i}(x)=1 $ on each subinterval $ I_s $.
		\end{itemize}
		
		It is easy to find that the support of $ B_{K_j,i}(\cdot) $ given by \eqref{eq,lemma,expression of Sij,1}  is $ (\xi_{j,0},\xi_{j,i}) $, and it  has $ C^{p-1} $ smoothness acorss the knots $\xi_{j,1},...,\xi_{j,i}  $. To show the property (\textit{P1}), taking the $ l $-th derivatives  at ($\lambda_1=1,\lambda_2=0$) gives
		\begin{eqnarray*}
			\dfrac{\mathrm{d}^{l} B_{K_j,i}(\cdot)}{\mathrm{d}x^l}\bigg|_{(\lambda_1=1,\lambda_2=0)}= \frac{p!i}{(p-l)!}\left(\frac{-p}{h_j}\right)^{l}\sum_{m=1}^{i}\frac{(-1)^{i-m}m^{i-1-l}}{(i-m)!m!}\quad \forall l\geq 0.
		\end{eqnarray*}
		Notice that, for $ i-1-l\geq 1 $, 
		\begin{eqnarray*}
			\sum_{m=1}^{i}\frac{(-1)^{i-m}m^{i-1-l}}{(i-m)!m!}=\sum_{m=0}^{i}\frac{(-1)^{i-m}m^{i-1-l}}{(i-m)!m!}=
			\begin{Bmatrix}
				i-1-l \\
				i
			\end{Bmatrix}.
		\end{eqnarray*}
		It follows from Lemma~\ref{lemma,stirling numbers} and $ i-1-l\geq 1 $ that
		\begin{eqnarray*}
			\dfrac{\mathrm{d}^{l} B_{K_j,i}(\cdot)}{\mathrm{d}x^l}\bigg|_{(\lambda_1=1,\lambda_2=0)} = 0\quad  \forall 0 \leq l\leq i-2.
		\end{eqnarray*}
		This indicates that $ B_{K_j,i}(\cdot) $ has $ C^{i-2} $ smoothness acorss the knot $\xi_{j,0}  $.
		
		We proceed to demonstrate that $ B_{K_j,i}(\cdot) $ $ (1\leq i\leq 2p) $ given by \eqref{eq,lemma,expression of Sij,1} and \eqref{eq,lemma,expression of Sij,2} satisfy the property (\textit{P2}). We start by proving the following result.
		\begin{equation}\label{eq,equi of eqs}
			\sum_{i=s+1}^{p}B_{K_j,i}(\lambda_1,\lambda_2)|_{I_s} = \sum_{m=s+1}^{p}\frac{(-1)^{p-m}}{m!(p-m)!}\left( P_{p,m}(\lambda_1,\lambda_2)\right)^{p}\,\,\forall0\leq s \leq p-1.
		\end{equation} 
		Regarding the left-hand side of the above equation, we have
		\begin{align*}
			\sum_{i=s+1}^{p}B_{K_j,i}(\lambda_1,\lambda_2)|_{I_s} &=\sum_{s+1\leq i\leq p,s+1\leq m\leq i}\dfrac{(-1)^{i-m} i}{(i-m)!m!m^{p+1-i}}\left( P_{p,m}(\lambda_1,\lambda_2)\right)^{p}\\
			&=\sum_{s+1\leq m\leq p,m\leq i\leq p}\dfrac{(-1)^{i-m} i}{(i-m)!m!m^{p+1-i}}\left( P_{p,m}(\lambda_1,\lambda_2)\right)^{p}\\
			&=\sum_{m=s+1}^{p}\sum_{i=m}^{p}\dfrac{(-1)^{i-m} i}{(i-m)!m!m^{p+1-i}}\left( P_{p,m}(\lambda_1,\lambda_2)\right)^{p}.
		\end{align*}
		Consequently, in order to prove \eqref{eq,equi of eqs}, it is necessary to show
		\begin{eqnarray}\label{eq,induce result}
			\sum_{i=m}^{p}\dfrac{(-1)^{i} i}{(i-m)!m^{p+1-i}}=\frac{(-1)^{p}}{(p-m)!}\quad \forall m\leq p.
		\end{eqnarray}
		We induce on $ p (\geq m) $. Obviously, \eqref{eq,induce result} holds for $ p=m $. Assume that \eqref{eq,induce result} is true for $ p=n(\geq m) $. Then, for $  p=n+1 $, we have
		\begin{align*}
			\sum_{i=m}^{n+1}\dfrac{(-1)^{i} i}{(i-m)!m^{n+2-i}}&=\frac{1}{m}\sum_{i=m}^{n}\dfrac{(-1)^{i} i}{(i-m)!m^{n+1-i}}+\dfrac{(-1)^{n+1} (n+1)}{(n+1-m)!m}\\
			&=\frac{1}{m}\left( \frac{(-1)^{n}}{(n-m)!} +\dfrac{(-1)^{n+1} (n+1)}{(n+1-m)!}\right)\\
			&=\frac{(-1)^{n+1}}{(n+1-m)!},
		\end{align*}
		which completes the induction step. Using   the symmetry relation \eqref{eq,lemma,expression of Sij,2} and the established equation \eqref{eq,equi of eqs} gives
		\begin{align}
			\sum_{i=1}^{s+1}B_{K_j,p+i}(\lambda_1,\lambda_2)|_{I_s} &= \sum_{i=p-s}^{p}B_{K_j,i}(\lambda_2,\lambda_1)|_{I_{p-1-s}}\nonumber\\
			&=\sum_{m=p-s}^{p}\frac{(-1)^{p-m}}{m!(p-m)!}\left( P_{p,m}(\lambda_2,\lambda_1)\right)^{p}\nonumber\\
			&=\sum_{m=0}^{s}\frac{(-1)^{p-m}}{m!(p-m)!}\left( P_{p,m}(\lambda_1,\lambda_2)\right)^{p}.\label{eq,equi of eqs,2}
		\end{align}
		It follows from \eqref{eq,equi of eqs} and \eqref{eq,equi of eqs,2} that, for $ 0\leq s\leq p-1 $, 
		\begin{align*}
			\sum_{i=s+1}^{s+p+1}B_{K_j,i}(\lambda_1,\lambda_2)|_{I_s}&=\sum_{i=s+1}^{p}B_{K_j,i}(\lambda_1,\lambda_2)|_{I_s}+\sum_{i=1}^{s+1}B_{K_j,p+i}(\lambda_1,\lambda_2)|_{I_s}\\
			&=\sum_{m=0}^{p}\frac{(-1)^{p-m}}{m!(p-m)!}\left( P_{p,m}(\lambda_1,\lambda_2)\right)^{p} \\
			&=1,
		\end{align*}
		where the last equation is the consequence of Lemma~\ref{lemma,expansion} and the fact $ \lambda_1+\lambda_2=1 $.
	\end{proof}
	
	\section{Properties of element-based B-spline basis function spaces}
	\label{sec2}
	In this section, the focus will be on the properties of the element-based B-spline basis function space $ \mathcal{S}_{p}^{\boldsymbol{k}}(\Xi):= \text{span}\{\mathcal{B}_{p}(\Xi)\}$, where the basis functions $ \mathcal{B}_{p}(\Xi) $ are defined by B-spline element basis functions.
	
	Subsection~\ref{subsec2.1} discusses the Hermite interpolation property and employs element analysis to prove optimal error estimates, where the regularity $ \boldsymbol{k} $ can be optimal for degree $ p $ (compared with the interpolation error results in \cite{Beirao2011}). Subsection~\ref{subsec2.2} deals with the knot vector $ \Xi $ in \eqref{eq:knot vector} with $ r_2=...=r_{m-1}=1 $. The element-wise spline functions will be used as new basis functions to construct the $ C^{p-1} $ finite element space $ \mathcal{S}_p^{p-1}(\mathcal{T}_{h}) $, with the objective of later presenting certain advantages in applications of isogeometric analysis.
	\subsection{Hermite interpolation and optimal error estimates}\label{subsec2.1}
	\begin{definition}\label{def: pik}
		Given the element-based B-spline basis function space $ \mathcal{S}_{p}^{\boldsymbol{k}}(\Xi)$ and a sufficiently smooth function $ u(x)$,
		the Hermite interpolant $ \pi_{p}^{\boldsymbol{k}}u\in \mathcal{S}_{p}^{\boldsymbol{k}}(\Xi)  $ of $u(x)$ is defined such that, for $  j=1,...,m $,
		\begin{equation*}\label{eq:SE interpolation,condition}
			(\pi_{p}^{\boldsymbol{k}}u)^{(l)}(\xi_j)=u^{(l)}(\xi_j),\quad  l=0,1,...,p-r_j,
		\end{equation*}
		and 
		\begin{equation*}\label{eq:SE interpolation,condition,2}
			(\pi_{p}^{\boldsymbol{k}}u)^{(l)}(\xi_j^{-})=u^{(l)}(\xi_j^{-}),\quad (\pi_{p}^{\boldsymbol{k}}u)^{(l)}(\xi_j^{+})=u^{(l)}(\xi_j^{+}),\quad  l=p-r_j+1,...,p-1,
		\end{equation*}
		where $ f^{(l)}(\xi_j^{-}) $ denotes the $ l $-th left-hand derivative of $ f $ at $ \xi_j $, and $ f^{(l)}(\xi_j^{+}) $ denotes the $ l $-th right-hand  derivative of $ f $ at $ \xi_j $. In the case of $ r_2=...=r_{m-1}=1 $ for $ \Xi $,  the  Hermite interpolation operator $ \pi_{p}^{\boldsymbol{k}}\cdot $ is simply denoted by $ \pi_{p}^{p-1}\cdot $.
	\end{definition}
	\begin{theorem}
		The Hermite interpolation operator $ \pi_{p}^{\boldsymbol{k}}\cdot $ in Definition~\ref{def: pik} is well-defined.
	\end{theorem}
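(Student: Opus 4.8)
The plan is to decouple the global interpolation problem into one local Hermite problem per element and to establish unisolvence locally. The key observation is that the conditions of Definition~\ref{def: pik} prescribe, for every element $K_j=(\xi_j,\xi_{j+1})$, the full one-sided endpoint data
\[
(\pi_p^{\boldsymbol{k}}u)^{(l)}(\xi_j^{+}),\quad (\pi_p^{\boldsymbol{k}}u)^{(l)}(\xi_{j+1}^{-}),\qquad l=0,1,\dots,p-1.
\]
At an interior knot $\xi_j$ the first group fixes the single-valued derivatives of orders $0,\dots,p-r_j$ (and, since every element of $\mathcal{S}_p^{\boldsymbol{k}}(\Xi)$ is $C^{p-r_j}$ there, this supplies the same value on both sides), while the second group fixes the orders $p-r_j+1,\dots,p-1$ separately from the left and the right; at $\xi_1,\xi_m$ the virtual elements $K_0,K_m$ render the exterior data vacuous. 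Consequently each restriction $(\pi_p^{\boldsymbol{k}}u)|_{K_j}\in\mathcal{S}_p^{\boldsymbol{k}}(\Xi_{K_j})$ is constrained by exactly these $2p$ endpoint conditions, and the shared low-order values enforce the required $C^{p-r_j}$ matching across each $\xi_j$. Hence well-definedness reduces to unique solvability of the local endpoint problem on each $K_j$.

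Since $\dim\mathcal{S}_p^{\boldsymbol{k}}(\Xi_{K_j})=2p$ coincides with the number of local conditions, it suffices to show that the homogeneous local problem admits only the trivial solution: if $w\in\mathcal{S}_p^{\boldsymbol{k}}(\Xi_{K_j})$ satisfies $w^{(l)}(\xi_{j,0}^{+})=w^{(l)}(\xi_{j,p}^{-})=0$ for $0\le l\le p-1$, then $w\equiv 0$. I would argue using the identification of $\mathcal{S}_p^{\boldsymbol{k}}(\Xi_{K_j})$ with the space of $C^{p-1}$ piecewise polynomials of degree $p$ on the subdivision $\xi_{j,0}<\xi_{j,1}<\dots<\xi_{j,p}$, rather than through the explicit basis of Theorem~\ref{lemma,expression of Sij}.

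The decisive step is this local uniqueness, which I would settle by a truncated-power argument. Any such $w$ is representable on $K_j$ as $w=q+\sum_{t=1}^{p-1}a_t\,(x-\xi_{j,t})_+^{p}$ with $q$ a polynomial of degree $p$, the truncated powers carrying the admissible jumps of the $p$-th derivative at the interior knots. The left conditions involve only $q$ and force $q(x)=a_0(x-\xi_{j,0})^{p}$, so $w(x)=\sum_{t=0}^{p-1}a_t(x-\xi_{j,t})^{p}$ on the last subinterval. The right conditions then give, with $d_t:=\xi_{j,p}-\xi_{j,t}>0$,
\[
\sum_{t=0}^{p-1}a_t\,d_t^{\,p-l}=0,\qquad l=0,1,\dots,p-1,
\]
equivalently $\sum_{t=0}^{p-1}(a_td_t)\,d_t^{\,k}=0$ for $k=0,\dots,p-1$. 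As the $d_t$ are distinct and positive this is a nonsingular Vandermonde system, so $a_td_t=0$ and hence $a_t=0$ for all $t$, giving $w\equiv 0$. I expect this unisolvence of the $2p$ endpoint functionals on the $2p$-dimensional local space to be the part demanding the most care; the Vandermonde structure is precisely what makes it succeed for any spacing of the element knots.

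Granting local unisolvence, the theorem follows at once. Existence is obtained by solving the $2p\times 2p$ local system on each $K_j$ and gluing, the common low-order endpoint values guaranteeing that the assembled function lies in $\mathcal{S}_p^{\boldsymbol{k}}(\Xi)$; uniqueness follows because vanishing interpolation data forces each $(\pi_p^{\boldsymbol{k}}u)|_{K_j}$ to vanish, whence $\pi_p^{\boldsymbol{k}}u\equiv 0$. Therefore $\pi_p^{\boldsymbol{k}}\cdot$ is well-defined.
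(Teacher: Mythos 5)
Your proof is correct, but it takes a genuinely different route from the paper's. The paper groups the degrees of freedom knot by knot: for each $\xi_j$ it matches the $p+r_j-1$ interpolation conditions against the $p+r_j-1$ B-splines in $\mathcal{B}_{\xi_j,r_j}$, rewrites the homogeneous system in terms of the element basis functions $B_{K_{j-1},p+i}$ and $B_{K_j,i}$ via the positive lower-triangular relation \eqref{eq:Li,M_i}, and then kills it using the explicit one-sided derivative values of $B_{K_j,i}$ at the element endpoints computed from Theorem~\ref{lemma,expression of Sij} (the Stirling-number formula), which make the resulting matrix triangular with nonzero diagonal. You instead decompose element by element, observe that Definition~\ref{def: pik} prescribes the full one-sided jet of order $p-1$ at both endpoints of every $K_j$ (the low-order data being shared across $\xi_j$, which is exactly what restores $C^{p-r_j}$ matching when you glue), and settle the resulting $2p\times 2p$ local unisolvence by a truncated-power representation and a Vandermonde determinant. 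Your argument is more elementary in that it never needs the explicit basis formulas, and it is slightly more general: the Vandermonde step works for arbitrary distinct interior element knots, whereas Theorem~\ref{lemma,expression of Sij} exploits their equispacing. The price is that you implicitly invoke the Curry--Schoenberg identification of $\mathcal{S}_{p}^{\boldsymbol{k}}(\Xi_{K_j})$ and $\mathcal{S}_{p}^{\boldsymbol{k}}(\Xi)$ with the full smoothness-constrained piecewise-polynomial spaces (both for the truncated-power basis and for concluding that the glued function lies in the span of the global B-splines); this is classical but is not stated in the paper, and a one-line dimension count confirming it would make the gluing step airtight. The paper's route, by contrast, stays entirely inside its own basis machinery, which is what later yields the explicit coefficient formulas \eqref{interpolation, k1}--\eqref{interpolation, k3}.
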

	\begin{proof}
		For any fixed $ j $  $ (1\leq j\leq m) $, it is observed that the total constraints  are $ p+r_j-1 $, which match the total degrees of freedom, that is, the number of the B-splines $ \mathcal{B}_{\xi_j,r_j} $ given by (see Fig.~\ref{fig: recurrence relation})
		\begin{align*}
			\{B_{K_{j-1},p+1},...,B_{K_{j-1},p+r_j-1},B_{p-r_j+1,1,j},...,B_{p-r_j+1,p-r_j+1,j},B_{K_{j},p-r_j+2},...,B_{K_{j},p} \}.
		\end{align*}
		Using Definition~\ref{def: pik} gives a square linear system of size $ p+r_j-1 $. Therefore, the uniqueness implies the existence. We are now left to show the uniqueness of the solution. If not, there exist $ p+r_j-1 $ constants $ L_i $,  $ R_i $ ($ 1\leq i\leq r_j-1  $) and $ M_i $  ($ 1\leq i\leq p-r_j+1  $), not all of which are zero, such that
		\begin{eqnarray*}
			\left\lbrace 
			\begin{aligned}
				&\sum_{i=1}^{p-r_j+1}M_iB^{(l)}_{p-r_j+1,i,j}(\xi_j)=0,\qquad l=0,1,...,p-r_j, \\
				&\sum_{i=1}^{r_j-1}L_iB^{(l)}_{K_{j-1},p+i}(\xi^{-}_j) + \sum_{i=1}^{p-r_j+1}M_iB^{(l)}_{p-r_j+1,i,j}(\xi^{-}_j)=0,\qquad l=p-r_j+1,...,p-1,\\
				&\sum_{i=1}^{r_j-1}R_iB^{(l)}_{K_{j},p-r_j+1+i}(\xi^{+}_j) + \sum_{i=1}^{p-r_j+1}M_iB^{(l)}_{p-r_j+1,i,j}(\xi^{+}_j)=0,\qquad l=p-r_j+1,...,p-1,\\
			\end{aligned}
			\right.
		\end{eqnarray*}
		As illustrated in Fig.~\ref{fig: recurrence relation}, the B-splines $ B_{p-r_j+1,i,j} $ ($ 1\leq i\leq p-r_j+1 $) are linear combinations of $ B_{K_{j-1},i} $ ($ p+r_j\leq i\leq 2p $) and $ B_{K_{j},i} $ ($ 1\leq i\leq p-r_j+1 $). Thus, the above system of equations also reads
		\begin{eqnarray}\label{eq:Li,Ri}
			\left\lbrace 
			\begin{aligned}
				&\sum_{i=1}^{p}L_iB^{(l)}_{K_{j-1},p+i}(\xi^{-}_j)=0,\qquad l=0,1,...,p-1,\\
				&\sum_{i=r_j-p}^{r_j-1}R_iB^{(l)}_{K_{j},p-r_j+1+i}(\xi^{+}_j)=0,\qquad l=0,1,...,p-1,
			\end{aligned}
			\right.
		\end{eqnarray}
		where $ L_i $ ($r_j \leq i\leq p  $) and $ R_i $ ($r_j-p \leq i\leq 0  $) are linear combinations of $ M_i $ ($ 1 \leq i\leq p-r_j+1  $), with the coefficients only related to $ h_{j-1}$ and $h_j $. For instance,
		\begin{eqnarray}\label{eq:Li,M_i}
			\begin{pmatrix}
				L_{r_j}\\
				L_{r_j+1}\\
				\vdots\\
				L_{p}\\
			\end{pmatrix}=   \left(\begin{array}{cccc}
				H_{1,1}& 0& \cdots & 0 \\
				H_{2,1}& H_{2,2} & \cdots& 0\\
				\vdots & \vdots  & \ddots & \vdots \\
				H_{p-r_j+1,1} &H_{p-r_j+1,2}&\cdots & H_{p-r_j+1,p-r_j+1}\\
			\end{array}
			\right)
			\begin{pmatrix}
				M_{1}\\
				M_{2}\\
				\vdots\\
				M_{p-r_j+1}\\
			\end{pmatrix},
		\end{eqnarray}
		where the elements $ H_{m,n}>0 $ for $ m,n:=1,2,..,p-r_j+1 $ and $ m\geq n $.   For  the first equation of \eqref{eq:Li,Ri}, it follows from the proof of  Theorem~\ref{lemma,expression of Sij} that
		\begin{align*}
			B^{(l)}_{K_{j-1},2p+1-i}(\xi^{-}_j) 
			& = \frac{(-1)^{p}p!i}{(p-l)!}\left(\frac{p}{h_{j-1}}\right)^{l}\sum_{m=1}^{i}\frac{(-1)^{i-m}m^{i-1-l}}{(i-m)!m!}\\
			&=
			\begin{cases}
				0,&0\leq l< i-1\leq p-1,\\
				\frac{(-1)^{p}p!i}{(p-l)!}\left(\frac{p}{h_{j-1}}\right)^{l},& 0\leq l= i-1\leq p-1.
			\end{cases}
		\end{align*}
		Therefore, the linear system given by the first equation of \eqref{eq:Li,Ri} admits only the trivial solution. Consequently, all coefficients $ M_i $ ($ 1\leq i\leq p-r_j+1  $) derived from the system \eqref{eq:Li,M_i}  must vanish. For the second equation of \eqref{eq:Li,Ri}, an analogous conclusion holds. This contradicts the non-vanishing of the constants $ L_i $,  $ R_i $ ($ 1\leq i\leq r_j-1  $) and $ M_i $  ($ 1\leq i\leq p-r_j+1  $). Thus, the operator $ \pi_{p}^{\boldsymbol{k}}\cdot $ is well-defined.
	\end{proof}
	
	For a function $ u(x) $, we say that $u(x)\in C^{\boldsymbol{k}}(\langle \Xi \rangle) $, $ \boldsymbol{k} = (k_1,...,k_{m}) $, if $ u(x) $ has $ C^{k_j} $ smoothness across the knot $ \xi_j $ for $ j=1,...,m $. 
	Define the element-wise  $ H^{p+1} $ space over $ \langle \Xi \rangle $ as 
	\begin{equation*}
		H_{h}^{p+1}(\langle \Xi \rangle): = \{u\in  C^{\boldsymbol{k}}(\Xi): \,\,\, u|_{K_j}\in H^{p+1}(K_j)\quad \forall j = 1,...,m   \}.
	\end{equation*}
	We have the following  optimal error estimates.
	\begin{theorem}\label{lemma: 1}
		For a function $ u(x)\in H_{h}^{p+1}(\langle \Xi \rangle)  $ and the  Hermite interpolant $ \pi_{p}^{\boldsymbol{k}}u\in \mathcal{S}_{p}^{\boldsymbol{k}}(\Xi)  $, we have the optimal local  approximation  error
		\begin{eqnarray}\label{eq: local interpolation error}
			|u-\pi_{p}^{\boldsymbol{k}}u|_{l,K_j} \leq Ch_j^{p+1-l}|u|_{p+1,K_j} \quad 
			\forall 0\leq l\leq p.
		\end{eqnarray}
		Consequently, 
		\begin{equation}\label{eq: global interpolation error}
			|u-\pi_{p}^{\boldsymbol{k}}u|_{l} \leq Ch^{p+1-l}|u|_{p+1,h} \quad 
			\forall 0\leq l\leq \min(\boldsymbol{k})+1.
		\end{equation}
		where $ |u|_{p+1,h} = (\sum_{j=1}^{m}|u|^2_{p+1,K_j})^{\frac{1}{2}} $.
	\end{theorem}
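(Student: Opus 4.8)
The plan is to reduce the global statement to an element-local Hermite interpolation problem and then apply a standard Bramble--Hilbert/scaling argument on a reference element. The entire point is that the element-based construction renders the interpolation genuinely local, so that optimal polynomial reproduction on each $K_j$ is available even when the smoothness $\boldsymbol{k}$ is maximal for the degree $p$.

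First I would show that $(\pi_p^{\boldsymbol{k}}u)|_{K_j}$ coincides with a purely local Hermite interpolant $\pi_{K_j}(u|_{K_j})\in\mathcal{S}_p^{\boldsymbol{k}}(\Xi_{K_j})$. The key observation is that the $2p$-dimensional element space $\mathcal{S}_p^{\boldsymbol{k}}(\Xi_{K_j})$ is exactly the restriction of $\mathcal{S}_p^{\boldsymbol{k}}(\Xi)$ to $K_j$: any element-local function extends to a global one because a neighbouring element imposes only $C^{k}$ matching across the shared knot, leaving the higher one-sided derivatives free. Moreover, the conditions defining $\pi_p^{\boldsymbol{k}}$ in Definition~\ref{def: pik}, when read from within $K_j$, amount to prescribing all one-sided derivatives $u^{(l)}(\xi_j^{+})$ and $u^{(l)}(\xi_{j+1}^{-})$ for $0\le l\le p-1$ --- precisely $2p$ conditions depending only on $u|_{K_j}$ (the two-sided conditions at a knot read, from one side, as a one-sided condition supplied by $u|_{K_j}$). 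Unique solvability of this local system is already contained in the preceding well-definedness theorem, whose proof treats each knot independently through the bases $\mathcal{B}_{\xi_j,r_j}$. This establishes the locality $(\pi_p^{\boldsymbol{k}}u)|_{K_j}=\pi_{K_j}(u|_{K_j})$.

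Next I would pass to the reference element $\hat{K}=(0,1)$ via the affine map $x=\xi_j+h_j\hat{x}$, under which $\pi_{K_j}$ becomes a fixed operator $\hat{\pi}$ independent of $j$. Two facts drive the estimate. (i) $\hat{\pi}$ reproduces $\mathbb{P}_p$: since $\mathbb{P}_p$ is contained in the element space (the element basis reproduces total degree $p$) and every $q\in\mathbb{P}_p$ trivially satisfies the $2p$ Hermite conditions, uniqueness forces $\hat{\pi}q=q$. (ii) $\hat{\pi}$ is bounded on $H^{p+1}(\hat{K})$, because its interpolation functionals are point evaluations of derivatives up to order $p-1$, continuous on $H^{p+1}(\hat{K})$ by the one-dimensional Sobolev embedding $H^{p+1}(\hat{K})\hookrightarrow C^{p}(\overline{\hat{K}})$, and the image is finite dimensional. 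Writing $\hat{u}-\hat{\pi}\hat{u}=(\hat{u}-q)-\hat{\pi}(\hat{u}-q)$ for arbitrary $q\in\mathbb{P}_p$ and taking the infimum, the Bramble--Hilbert (Deny--Lions) lemma gives $|\hat{u}-\hat{\pi}\hat{u}|_{l,\hat{K}}\le C|\hat{u}|_{p+1,\hat{K}}$ for $0\le l\le p$. Undoing the affine scaling through $|v|_{l,K_j}=h_j^{1/2-l}|\hat{v}|_{l,\hat{K}}$ then yields the local bound~\eqref{eq: local interpolation error}.

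Finally, for the global estimate I would sum the local bounds. For $0\le l\le\min(\boldsymbol{k})+1$ the difference $u-\pi_p^{\boldsymbol{k}}u$ lies in $H^l(\langle\Xi\rangle)$ globally, since both $u$ and $\pi_p^{\boldsymbol{k}}u$ are $C^{\min(\boldsymbol{k})}$ across the knots and hence exhibit no derivative jumps of order $\le l-1$; therefore $|u-\pi_p^{\boldsymbol{k}}u|_l^2=\sum_{j}|u-\pi_p^{\boldsymbol{k}}u|_{l,K_j}^2$. Bounding each term by~\eqref{eq: local interpolation error} and replacing every $h_j$ by $h$ produces~\eqref{eq: global interpolation error}. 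The main obstacle is the first paragraph: one must argue precisely that the global interpolation conditions decouple into the $2p$ one-sided conditions on each $K_j$, so that $(\pi_p^{\boldsymbol{k}}u)|_{K_j}$ depends on $u$ only through $u|_{K_j}$. Once this locality and the embedding $\mathbb{P}_p\subset\mathcal{S}_p^{\boldsymbol{k}}(\Xi_{K_j})$ are in hand, the optimal rate follows from the classical argument even when $\boldsymbol{k}$ is maximal, which is exactly the improvement over \cite{Beirao2011}.
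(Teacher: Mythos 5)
Your proposal is correct and follows the same overall architecture as the paper's proof: reduce to a single element, show that the interpolant reproduces polynomials of degree $p$ on $K_j$, and conclude by the Bramble--Hilbert lemma, with the global bound obtained by summing the local ones. Where you diverge is the mechanism for polynomial reproduction. You obtain it abstractly from unisolvence of the local $2p$-condition Hermite problem on the $2p$-dimensional space $\mathcal{S}_p^{\boldsymbol{k}}(\Xi_{K_j})$ together with the inclusion $\mathbb{P}_p\subset\mathcal{S}_p^{\boldsymbol{k}}(\Xi_{K_j})$; the paper instead argues explicitly, writing $\pi_p^{\boldsymbol{k}}w-w$ on the first subinterval as $c_0\left(P_{p,0}\right)^p$, the jump across each interior subknot $\xi_{j,s}$ as $c_s\left(P_{p,s}\right)^p$ (using the $C^{p-1}$ smoothness of the interpolant inside $K_j$), telescoping to $\sum_{s=0}^{p}c_s\left(P_{p,s}\right)^p=0$, and invoking linear independence to force all $c_s=0$. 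Your route obliges you to actually verify that the element-local system is nonsingular: the paper's well-definedness theorem is organized knot-by-knot rather than element-by-element, so local unisolvence is not literally ``already contained'' there, although it does follow readily from the triangular structure of the one-sided endpoint derivatives of the $B_{K_j,i}$ computed in Theorem~\ref{lemma,expression of Sij}. In exchange, you make explicit two points the paper leaves implicit: that $(\pi_p^{\boldsymbol{k}}u)|_{K_j}$ depends on $u$ only through the one-sided endpoint data of $u|_{K_j}$, and the boundedness and affine-scaling bookkeeping needed to apply Bramble--Hilbert on a reference element. Both arguments are sound, and the constants and rates you obtain match \eqref{eq: local interpolation error} and \eqref{eq: global interpolation error}.
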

	\begin{proof}
		The global  approximation  error \eqref{eq: global interpolation error} follows directly the local  approximation  error \eqref{eq: local interpolation error}. To prove \eqref{eq: local interpolation error}, we first show 
		\begin{eqnarray}\label{eq: interpolation operator, exact}
			(\pi_{p}^{\boldsymbol{k}}w)(x)\equiv w(x)\quad \forall x\in K_j,
		\end{eqnarray}
		for any  polynomial $ w(x) $ of degree $ p $ on $ K_j $. By Definition~\ref{def: pik}, one gets that $ w $ and $ \pi_{p}^{\boldsymbol{k}}w $ have the same function values and the first to $ (p-1) $-th right-hand derivatives at $ \xi_j $ and left-hand derivatives at $ \xi_{j+1} $, which yields
		\begin{eqnarray}\label{eq:lemma 1,1}
			\left\{
			\begin{aligned}
				&\pi_{p}^{\boldsymbol{k}}w|_{I_0}-w = c_0\left( P_{p,0}(\lambda_1,\lambda_2)\right)^p,\\[1mm]
				&w - \pi_{p}^{\boldsymbol{k}}w|_{I_{p-1}} = c_{p}\left( P_{p,p}(\lambda_1,\lambda_2)\right)^p,
			\end{aligned}
			\right.
		\end{eqnarray}
		where $c_0,c_{p}  $ are two constants.
		Since $\pi_{p}^{\boldsymbol{k}}w\in C^{p-1}(K_j) $, it follows  that
		\begin{equation}\label{eq:lemma 1,2}
			\pi_{p}^{\boldsymbol{k}}w|_{I_{s}} - \pi_{p}^{\boldsymbol{k}}w|_{I_{s-1}} = c_{s}\left( P_{p,s}(\lambda_1,\lambda_2)\right)^p\quad \forall 1\leq s \leq p-1,
		\end{equation}
		where $c_s  $ are some  constants. Summing \eqref{eq:lemma 1,1} and \eqref{eq:lemma 1,2} yields
		\begin{eqnarray*}
			c_{0}\left( P_{p,0}(\lambda_1,\lambda_2)\right)^p+ c_{1}\left( P_{p,1}(\lambda_1,\lambda_2)\right)^p+...+ c_{p}\left( P_{p,p}(\lambda_1,\lambda_2)\right)^p=0.
		\end{eqnarray*}
		Clearly, $ c_{s}\left( P_{p,s}(\lambda_1,\lambda_2)\right)^p  $ $ (0\leq s\leq p) $ are linearly independent. Thus,
		\begin{eqnarray*}
			c_{0}=c_{1}=...=c_{p}=0,
		\end{eqnarray*}
		by which, we get the equation \eqref{eq: interpolation operator, exact}. The desired equation \eqref{eq: local interpolation error} follows by applying the Bramble-Hilbert lemma.
	\end{proof}
	
	\subsection{Reconstruction of basis via element-by-element representations}\label{subsec2.2}
	In this subsection, we consider the knot vector $ \Xi $ with $ r_2=...=r_{m-1}=1 $ and the related B-spline basis functions $ \mathcal{B}_p(\mathcal{T}_{h}) $ as well as the space $ \mathcal{S}_p^{p-1}(\mathcal{T}_{h}) $. The specific forms of the Hermite interpolant  $ \pi_{p}^{p-1}u $ of a function $ u\in C^{p-1}(\langle \Xi \rangle) $ will be provided. By representing $ \pi_{p}^{p-1}u $ element-by-element, the alternative spline basis functions of  $ \mathcal{S}_p^{p-1}(\mathcal{T}_{h}) $ are obtained.
	
	For each knot $ \xi_j $ ($ j=1,...,m $), we denote by $ B_{\xi_j,i}(x) $ ($ i=1,..., p$) the B-spline element basis functions in $ \mathcal{B}_{\xi_j,r_j} $. Then the Hermite interpolant  $ \pi_{p}^{p-1}u $ can be represented as follows 
	\begin{equation}\label{interpolation property,d1}
		(\pi_{p}^{p-1}u)(x)=\sum_{j=1}^{m}\sum_{i=1}^{p}c_{j,i}(u) B_{\xi_j,i}(x),
	\end{equation}
	where $c_{j,i}(u)$ are some constants to be determined with respect to $ u $. Recall the definition of $ B_{\xi_j,i}(x) $ given by  Theorem~\ref{theorem, generate Bspline} and Theorem~\ref{lemma,expression of Sij}. Then, for $ j=1,...,m $, solving 
	\begin{equation*}
		\sum_{i=1}^{p}c_{j,i}(u) B^{(l)}_{\xi_j,i}(\xi_j) = u^{(l)}(\xi_j), \quad l=0,1,...,p-1,
	\end{equation*}
	yields
	\begin{itemize}
		\item for $p=1$, 
		\begin{eqnarray}\label{interpolation, k1}
			c_{j,1}(u)=u(\xi_j),
		\end{eqnarray}
		\item for $p=2$, 
		\begin{eqnarray}\label{interpolation, k2}
			\left\{
			\begin{aligned}
				&c_{j,1}(u)=u(\xi_j)-\frac{h_{j-1}}{4}u^{(1)}(\xi_j),\\[1.5mm]
				&c_{j,2}(u)=u(\xi_j)+\frac{h_{j}}{4}u^{(1)}(\xi_j),
			\end{aligned}
			\right.
		\end{eqnarray}
		\item for $p=3$,
		\begin{eqnarray}\label{interpolation, k3}
			\left\{
			\begin{aligned}
				&c_{j,1}(u)=u(\xi_j)-\frac{h_{j-1}}{3}u^{(1)}(\xi_j)+\frac{h_{j-1}^{2}}{27}u^{(2)}(\xi_j),\\[1.5mm]
				&c_{j,2}(u)=u(\xi_j)+\frac{h_{j}-h_{j-1}}{9}u^{(1)}(\xi_j)-\frac{h_{j-1}h_{j}}{54}u^{(2)}(\xi_j),\\[1.5mm]
				&c_{j,3}(u)=u(\xi_j)+\frac{h_{j}}{3}u^{(1)}(\xi_j)+\frac{h_{j}^{2}}{27}u^{(2)}(\xi_j).
			\end{aligned}
			\right.
		\end{eqnarray} 
	\end{itemize}
	It is evident that, for $ p>1 $, the Hermite interpolation $ \pi_{p}^{p-1}u $ of $ \mathcal{S}_p^{p-1}(\mathcal{T}_{h}) $ differs from the interpolation of classical Hermite finite element spaces in two main ways. Firstly, the polynomial space degree is reduced for a same space regularity. Secondly, the interpolation coefficients $ c_{j,i}(u) $ are linear combinations of function values and derivative values, rather than being solely functions of these values.
	
	By the exact form \eqref{interpolation property,d1} of the Hermite interpolant  $ \pi_{p}^{p-1}u $ where the coefficients are given by \eqref{interpolation, k1}, \eqref{interpolation, k2} and \eqref{interpolation, k3}, representing it  element-by-element yields
	\begin{align*}
		(\pi_{p}^{p-1}u)(x)=&\sum_{j=1}^{m-1}(\pi_{p}^{p-1}u)(x)|_{K_j}\\
		=&\sum_{j=1}^{m-1}\sum_{i=1}^{p}\left( c_{j,i}(u) B_{\xi_j,i}(x)+c_{j+1,i}(u) B_{\xi_{j+1},i}(x)\right)|_{K_j}\\
		=&\sum_{j=1}^{m-1}\sum_{i=1}^{p}\left(u^{(i-1)}(\xi_j)\phi^{K_j}_{\xi_j,i}(x)+u^{(i-1)}(\xi_{j+1})\phi^{K_j}_{\xi_{j+1},i}(x)\right),
	\end{align*}
	where, using the B-spline element basis functions $ B_{K_j,i}(x) $ ($ i=1,...,2p $),
	\begin{itemize}
		\item for $p=1$, 
		\begin{eqnarray}\label{new basis, k1}
			\phi^{K_j}_{\xi_{j},1}(x)=B_{K_j,1}(x),\quad \phi^{K_j}_{\xi_{j+1},1}(x)=B_{K_j,2}(x),
		\end{eqnarray}
		\item for $p=2$, 
		\begin{eqnarray}\label{new basis, k2}
			\left\{
			\begin{aligned}
				&\phi^{K_j}_{\xi_{j},1}(x)=B_{K_j,1}(x)+B_{K_j,2}(x),\quad \phi^{K_j}_{\xi_{j},2}(x)=\frac{h_j}{4}B_{K_j,2}(x),\\[1.5mm]
				&\phi^{K_j}_{\xi_{j+1},1}(x)=B_{K_j,3}(x)+B_{K_j,4}(x),\quad \phi^{K_j}_{\xi_{j+1},2}(x)=-\frac{h_j}{4}B_{K_j,3}(x),
			\end{aligned}
			\right.
		\end{eqnarray}
		\item for $p=3$,
		\begin{eqnarray}\label{new basis, k3}
			\left\{
			\begin{aligned}
				&\qquad \phi^{K_j}_{\xi_{j},1}(x)=B_{K_j,1}(x)+B_{K_j,2}(x)+B_{K_j,3}(x),\\
				& \phi^{K_j}_{\xi_{j},2}(x)=h_j(\frac{1}{9}B_{K_j,2}(x)+\frac{1}{3}B_{K_j,3}(x)),\,\,\, \phi^{K_j}_{\xi_{j},3}(x)=\frac{h_j^{2}}{27}B_{K_j,3}(x),\\[1.5mm]
				&\qquad \phi^{K_j}_{\xi_{j+1},1}(x)=B_{K_j,4}(x)+B_{K_j,5}(x)+B_{K_j,6}(x),\\
				&\phi^{K_j}_{\xi_{j+1},2}(x)=-h_j(\frac{1}{3}B_{K_j,4}(x)+\frac{1}{9}B_{K_j,5}(x)),\,\,\,
				\phi^{K_j}_{\xi_{j+1},3}(x)=\frac{h_j^{2}}{27}B_{K_j,4}(x).\\[1.5mm]
			\end{aligned}
			\right.
		\end{eqnarray} 
	\end{itemize}
	It can be seen that the interpolant $ \pi_{p}^{p-1}u $, when expressed element-by-element, presents a very clean form.  For $ j=1,...,m $ and $ i = 1,...,p $, define
	\begin{eqnarray}\label{eq:new basis}
		B^*_{\xi_j,i}(x): = \left\{
		\begin{aligned}
			&\phi^{K_{j-1}}_{\xi_{j},i}, &x\in K_{j-1},\\
			&\phi^{K_{j}}_{\xi_{j},i}, &x\in K_{j},\\
			& 0,&  \mbox{otherwise}.
		\end{aligned}
		\right.
	\end{eqnarray}
	Then, the Hermite interpolant  $ \pi_{p}^{p-1}u $ has the following clean form expressed knot-by-knot
	\begin{equation}\label{eq:new basis interpolant}
		(\pi_{p}^{p-1}u)(x)=\sum_{j=1}^{m}\sum_{i=1}^{p}u^{(i-1)}(\xi_j)B^*_{\xi_j,i}(x).
	\end{equation}
	Denote 
	\begin{equation*}
		\mathcal{B}^*_p(\mathcal{T}_{h}):=\{B^*_{\xi_j,i}(x),\,\,\,j=1,...,m-1,\,\,i = 1,...,p\}.
	\end{equation*}
	It is not difficult to find that
	\begin{equation*}
		\text{span}\{	\mathcal{B}^*_p(\mathcal{T}_{h}) \} = \text{span}\{	\mathcal{B}_p(\mathcal{T}_{h}) \} = \mathcal{S}_p^{p-1}(\mathcal{T}_{h}).
	\end{equation*}
	\section{Element-based B-spline basis function spaces with applications to isogeometric analysis}
	\label{sec3}
	In this section, we consider the two-dimensional physical domain  obtained from the two-dimensional parameter domain via a geometric mapping in Computer-Aided Design (CAD). Within the context of isogeometric analysis (IgA), a discussion is then undertaken on the division of computations into those that are carried out on elements using element geometric mapping and the Hermite interpolation property.
	
	Subsection~\ref{subsec3.1} converts the geometric mapping in CAD to element geometric mappings by means of element analysis.
	Subsection~\ref{subsec3.2} deals with  the computation in IgA and divides it into element-by-element computations.
	\subsection{Geometric mapping with element analysis}
	\label{subsec3.1}
	We first introduce the parameter domain $ \hat{\Omega}:=\langle\Xi_{\xi}\rangle\otimes\langle\Xi_{\zeta}\rangle  $ in the $ \hat{x}\hat{y} $-plane, where
	\begin{equation*}
		\Xi_{\xi} =\{ r_1\xi_{1},\xi_{1,1},...,\xi_{1,p_1-1},r_2\xi_{2},...,r_{m_1-1}\xi_{m_1-1},\xi_{m_1-1,1},...,\xi_{m_1-1,p_1-1},r_{m_1}\xi_{m_1} \},
	\end{equation*}
	with  $ r_1=r_{m_1}=p_1+1 $, $ r_2=...=r_{m_1-1}\leq p_1 $, and
	\begin{equation*}
		\Xi_{\zeta} =\{ s_1\zeta_{1},\zeta_{1,1},...,\zeta_{1,p_2-1},s_2\zeta_{2},...,s_{m_2-1}\zeta_{m_2-1},\zeta_{m_2-1,1},...,\zeta_{m_2-1,p_2-1},s_{m_2}\zeta_{m_2} \},
	\end{equation*}
	with  $ s_1=s_{m_2}=p_2+1 $, $ s_2=...=s_{m_2-1}\leq p_2 $. The B-spline basis functions on  $ \Xi_{\xi} $ and $ \Xi_{\zeta} $ are denoted, respectively, by
	\begin{equation*}
		\mathcal{B}_{p_1}(\Xi_{\xi}) = \{\mathcal{B}_{\xi_1,r_1},\mathcal{B}_{\xi_2,r_2},...,\mathcal{B}_{\xi_{m_1},r_{m_1}}  \},
	\end{equation*}
	and
	\begin{equation*}
		\mathcal{B}_{p_2}(\Xi_{\zeta}) = \{\mathcal{B}_{\zeta_1,s_1},\mathcal{B}_{\zeta_2,s_2},...,\mathcal{B}_{\zeta_{m_2},s_{m_2}}  \},
	\end{equation*}
	where basis $ \mathcal{B}_{\xi_{j_1},r_{j_1}} $ $ (j_1=1,...,m_1) $ and $ \mathcal{B}_{\zeta_{j_2},s_{j_2}} $  $ (j_2=1,...,m_2) $ are  obtained from B-spline element basis functions by Theorem~\ref{theorem, generate Bspline}. There is a partition of $ \hat{\Omega} $ into rectangles:
	\begin{equation*}
		\mathcal{T}_{h}:= \{K_{j_1,j_2}=K_{\xi,j_1}\otimes K_{\zeta,j_2}, \,\, j_1=1,...,m_1-1,\,\, j_2=1,...,m_2-1\}, 
	\end{equation*}
	where $ K_{\xi,j_1} $ and $ K_{\zeta,j_2} $ are elements of $ \Xi_{\xi} $ and $ \Xi_{\zeta} $, and $ h $ is the maximum diameter of $ K_{j_1,j_2} $.
	The set of tensor-product B-splines basis functions on $ \Xi_{\xi}\otimes \Xi_{\zeta} $ is $ \mathcal{B}_{p_1,p_2}(\Xi_{\xi}\otimes \Xi_{\zeta}):= \mathcal{B}_{p_1}(\Xi_{\xi})\otimes\mathcal{B}_{p_2}(\Xi_{\zeta}) $, and the associated space is $ \mathcal{S}_{p_1,p_2}^{\boldsymbol{k_1},\boldsymbol{k_2}}(\Xi_{\xi}\otimes \Xi_{\zeta}) :=\text{span}\{\mathcal{B}_{p_1,p_2}(\Xi_{\xi}\otimes \Xi_{\zeta})\} $.
	
	Here and thereafter, we only consider the case $ p_1=p_2=p $, $ r_2=...=r_{m_1-1} =1 $ and $ s_2=...=s_{m_1-1} =1 $  to streamline the presentation. However, we shold keep in mind that the subsequent  discussion remains valid for the general case of knot vectors $ \Xi_{\xi} $ and $ \Xi_{\zeta} $. In this special case, we will also refer to  $ \mathcal{B}_{p_1,p_2}(\Xi_{\xi}\otimes \Xi_{\zeta})$ by  $ \mathcal{B}_{p,p}(\mathcal{T}_{h})$ and $ \mathcal{S}_{p_1,p_2}^{\boldsymbol{k_1},\boldsymbol{k_2}}(\Xi_{\xi}\otimes \Xi_{\zeta}) $ by $ \mathcal{S}_{p,p}^{p-1,p-1}(\mathcal{T}_{h}) $. We have 
	\begin{equation*}
		\mathcal{B}_{p,p}(\mathcal{T}_{h}):= \{ B_{\xi_{j_1},i_1}(\hat{x}) B_{\zeta_{j_2},i_2}(\hat{y}),\,\,\, j_a=1,...,m_a,\,\, i_a = 1,...,p,\,\, a =1,2 \},
	\end{equation*}
	where $ B_{\xi_{j_1},i_1}(\hat{x}) $ ($ i_1=1,...,p $) and $ B_{\zeta_{j_2},i_2}(\hat{y}) $ ($ i_2=1,...,p $)  are the basis of $ \mathcal{B}_{\xi_{j_1},r_{j_1}} $ and $ \mathcal{B}_{\zeta_{j_2},s_{j_2}} $, respectively. Recalling Subsection~\ref{subsec2.2}, another basis of $\mathcal{S}_{p,p}^{p-1,p-1}(\mathcal{T}_{h}) $ can be obtained  as follows
	\begin{equation}\label{def:new basis,2D}
		\{ B^*_{\xi_{j_1},i_1}(\hat{x}) B^*_{\zeta_{j_2},i_2}(\hat{y}),\,\,\, j_a=1,...,m_a,\,\, i_a = 1,...,p,\,\, a =1,2 \},
	\end{equation}
	where $ B^*_{\xi_{j_1},i_1}(\hat{x}) $ ($ i_1=1,...,p $) and $ B^*_{\zeta_{j_2},i_2}(\hat{y}) $ ($ i_2=1,...,p $) are defined by \eqref{eq:new basis} on each of the knot vectors $ \Xi_{\xi} $ and $ \Xi_{\zeta} $.
	
	In CAD, the physical domain $ \Omega $ in the $ xy $-plane is parameterized by a geometry mapping $ \boldsymbol{F}: \hat{\Omega}\rightarrow\Omega $, which is typically represented by rational B-splines. We consider this mapping generated directly by B-splines as follows 
	\begin{equation}\label{eq:mapping, B}
		\boldsymbol{F}(\hat{x},\hat{y}) = \sum_{j_1=1,j_2=1}^{m_1,m_2}\sum_{i_1=1,i_2=1}^{p,p}\boldsymbol{C}_{j_1,i_1;j_2,i_2}\boldsymbol{B}_{j_1,i_1;j_2,i_2}(\hat{x},\hat{y}),
	\end{equation}
	where $ \boldsymbol{B}_{j_1,i_1;j_2,i_2}(\hat{x},\hat{y}):= B_{\xi_{j_1},i_1}(\hat{x}) B_{\zeta_{j_2},i_2}(\hat{y})$, and the coefficients $ \boldsymbol{C}_{j_1,i_1;j_2,i_2}\in \mathbb{R}^{2} $ are referred as control points for the physical domain $ \Omega $. In the majority of cases, the non-interpolatory nature of the B-spline basis functions  precludes the interpretation of the control point values. However, as can be seen from Section~\ref{subsec2.2}, we can get some interpretations of these control points by the Hermite interpolation.
	Furthermore, using the new basis \eqref{def:new basis,2D} for the mapping \eqref{eq:mapping, B} enables us to obtain
	\begin{equation}\label{eq:mapping, B,*}
		\boldsymbol{F}(\hat{x},\hat{y}) = \sum_{j_1=1,j_2=1}^{m_1,m_2}\sum_{i_1=1,i_2=1}^{p,p}\boldsymbol{U}_{j_1,i_1;j_2,i_2}\boldsymbol{B}^{*}_{j_1,i_1;j_2,i_2}(\hat{x},\hat{y}),
	\end{equation}
	where $ \boldsymbol{B}^*_{j_1,i_1;j_2,i_2}(\hat{x},\hat{y}):= B^*_{\xi_{j_1},i_1}(\hat{x}) B^*_{\zeta_{j_2},i_2}(\hat{y})$. For each point $ (\xi_{j_1},\zeta_{j_2}) $ on $ \hat{\Omega} $, the $ p^2 $ coefficients $ \boldsymbol{U}_{j_1,i_1;j_2,i_2} $ ($ i_a = 1,...,p,\, a =1,2 $) can be uniquely obtained from the $ p^2 $ coefficients $ \boldsymbol{C}_{j_1,i_1;j_2,i_2}$ ($ i_a = 1,...,p,\, a =1,2 $). Taking $ p=2 $ as an example,  using the interpolation property \eqref{interpolation, k2} yields
	{\small \begin{eqnarray*}
			\left\{
			\begin{aligned}
				&\boldsymbol{C}_{j_1,1;j_2,1} = \boldsymbol{U}_{j_1,1;j_2,1}-\frac{h_{\xi,j_1-1}}{4}\boldsymbol{U}_{j_1,2;j_2,1}-\frac{h_{\zeta,j_2-1}}{4}\boldsymbol{U}_{j_1,1;j_2,2}+\frac{h_{\xi,j_1-1}h_{\zeta,j_2-1}}{16}\boldsymbol{U}_{j_1,2;j_2,2},\\
				&\boldsymbol{C}_{j_1,2;j_2,1} = \boldsymbol{U}_{j_1,1;j_2,1}+\frac{h_{\xi,j_1}}{4}\boldsymbol{U}_{j_1,2;j_2,1}-\frac{h_{\zeta,j_2-1}}{4}\boldsymbol{U}_{j_1,1;j_2,2}-\frac{h_{\xi,j_1}h_{\zeta,j_2-1}}{16}\boldsymbol{U}_{j_1,2;j_2,2},\\
				&\boldsymbol{C}_{j_1,1;j_2,2} = \boldsymbol{U}_{j_1,1;j_2,1}-\frac{h_{\xi,j_1-1}}{4}\boldsymbol{U}_{j_1,2;j_2,1}+\frac{h_{\zeta,j_2}}{4}\boldsymbol{U}_{j_1,1;j_2,2}-\frac{h_{\xi,j_1-1}h_{\zeta,j_2}}{16}\boldsymbol{U}_{j_1,2;j_2,2},\\
				&\boldsymbol{C}_{j_1,2;j_2,2} = \boldsymbol{U}_{j_1,1;j_2,1}+\frac{h_{\xi,j_1}}{4}\boldsymbol{U}_{j_1,2;j_2,1}+\frac{h_{\zeta,j_2}}{4}\boldsymbol{U}_{j_1,1;j_2,2}+\frac{h_{\xi,j_1}h_{\zeta,j_2}}{16}\boldsymbol{U}_{j_1,2;j_2,2}.
			\end{aligned}
			\right.
	\end{eqnarray*}}
	where $ h_{\xi,j_1} $ and $ h_{\zeta,j_2} $ are the lengths of elements $ K_{\xi,j_1} $ and $ K_{\zeta,j_2} $, respectively. 	It is obvious that $ \boldsymbol{U}_{j_1,1;j_2,1} $ stands for the exact location  in the physical domain $ \Omega $, which is mapped from $ (\xi_{j_1},\zeta_{j_2}) $ of the parameter domain $ \hat{\Omega} $ through $ \boldsymbol{F} $. In view of the element-wise definition of the basis functions $ B^*_{\xi_{j_1},i_1}$ and $B^*_{\zeta_{j_2},i_2}$, as displayed in \eqref{eq:new basis}, the geometric mapping $  \boldsymbol{F}  $ can be reduced to a number of element geometric mappings as follows:
	\begin{equation}\label{eq:mapping, B,*,element}
		\boldsymbol{F}(\hat{x},\hat{y}) = \sum_{j_1=1,j_2=1}^{m_1-1,m_2-1}\boldsymbol{F}_{K_{j_1,j_2}}(\hat{x},\hat{y}),
	\end{equation}
	with
	\begin{equation*}
		\boldsymbol{F}_{K_{j_1,j_2}}(\hat{x},\hat{y}):=\sum_{n_1=0,n_2=0}^{1,1}\sum_{i_1=1,i_2=1}^{p,p}\boldsymbol{U}_{j_1+n_1,i_1;j_2+n_2,i_2}\phi^{K_{\xi,j_1}}_{\xi_{j_1+n_1},i_1}(\hat{x})\phi^{K_{\zeta,j_2}}_{\zeta_{j_2+n_2},i_2}(\hat{y})
	\end{equation*}
	where functions $ \phi^{K_{\xi,j_1}}_{\xi_{j_1},i_1}$ and $\phi^{K_{\zeta,j_2}}_{\zeta_{j_2},i_2} $ are linear combinations of B-spline element basis functions in each direction, e.g., see 
	\eqref{new basis, k1}, \eqref{new basis, k2} and \eqref{new basis, k3}.
	
	In the end of this subsection, we briefly outline the refinement. In CAD, the refinement of physical domains is often done by updating control points through the process of knot insertion. It is a general observation that, in most cases, only the newly control points at the corners remain the same as the original control points.   Note that the uniform refinements of the two knot vectors $ \Xi_{\xi} $ and $ \Xi_{\zeta} $ result in an identical mesh to that produced by the uniform refinement of the mesh $ \mathcal{T}_h $. Using the equivalent mapping \eqref{eq:mapping, B,*}, we can consider the refinement of the mesh $ \mathcal{T}_h $ to update the coefficients $ \boldsymbol{U}_{j_1,i_1;j_2,i_2} $ associated with the vertices. Due to the interpolatory nature of the original coefficients, they remain unchanged in the refinement. Consequently, it is only necessary to provide the coefficients associated with the newly generated vertices following the refinement of the mesh $ \mathcal{T}_h $. This process is straightforward and we omit the discussion here.
	\begin{remark}
		We assume that the physical domain is parameterized directly by B-splines, although it is well known that rational B-splines are employed in CAD. Indeed, the discussion can be extended to the case of using rational B-splines, since the Hermite interpolation can still be well defined, but in a more complex form. In addition, non-rational B-splines are sufficient to represent the geometry in many cases. 
	\end{remark}
	\subsection{Computations in isogeometric analysis via elements}
	\label{subsec3.2}
	In isogeometric analysis, the approximation space $ \mathcal{V}_h $ of the space $ \mathcal{V} $ on the physical domain $ \Omega $ is defined by
	\begin{equation*}
		\mathcal{V}_h:=\text{span}\{\boldsymbol{D}_{j_1,i_1;j_2,i_2}:=\boldsymbol{B}_{j_1,i_1;j_2,i_2}\circ\boldsymbol{F}^{-1}\}_{j_1=1,i_1=1;j_2=1,i_2=1}^{m_1,p;m_2,p},
	\end{equation*}
	where the geometric mapping $ \boldsymbol{F} $ is given by \eqref{eq:mapping, B}, which can be reduced to element geometric mappings \eqref{eq:mapping, B,*,element}.
	We consider the use of the approximation space  as follows
	\begin{equation}\label{approxiamtion space,B,*}
		\mathcal{V}^*_h:=\text{span}\{\boldsymbol{D}^*_{j_1,i_1;j_2,i_2}:=\boldsymbol{B}^*_{j_1,i_1;j_2,i_2}\circ\boldsymbol{F}^{-1}\}_{j_1=1,i_1=1;j_2=1,i_2=1}^{m_1,p;m_2,p}.
	\end{equation}
	In the one-dimensional case, the interpolation form \eqref{eq:new basis interpolant} exhibits a more simplified expression than the interpolation form \eqref{interpolation property,d1}, both in terms of the element-by-element representations of the basis functions and the coefficients. This will facilitate the subsequent description, where the same discussion makes no difference for $ \mathcal{V}_h $ instead.
	
	For the exact solution $ u(x,y) $ of a model problem, the approximate solution $ u_h(x,y) $ is written as
	\begin{equation}\label{u_h,B,*}
		u_h(x,y)=\sum_{j_1=1,j_2=1}^{m_1,m_2}\sum_{i_1=1,i_2=1}^{p,p}u_{j_1,i_1;j_2,i_2}(\boldsymbol{B}^*_{j_1,i_1;j_2,i_2}\circ\boldsymbol{F}^{-1})(x,y),
	\end{equation}
	with unknown coefficients $ u_{j_1,i_1;j_2,i_2} $. In considering the relationship between the exact solution and the numerical solution (sometimes helping us to deal with the boundary conditions of the model problem),  we draw 
	\begin{equation*}
		u(\boldsymbol{F}(\hat{x},\hat{y}) ) \approx \sum_{j_1=1,j_2=1}^{m_1,m_2}\sum_{i_1=1,i_2=1}^{p,p}u_{j_1,i_1;j_2,i_2}\boldsymbol{B}^*_{j_1,i_1;j_2,i_2}(\hat{x},\hat{y}).
	\end{equation*}
	For each point $ (\xi_{j_1},\zeta_{j_2}) $ on $ \hat{\Omega} $,  we obtain
	\begin{equation}\label{eq: boundary imposition,1}
		u(\boldsymbol{F}(\xi_{j_1},\zeta_{j_2}) ) \approx u_{j_1,1;j_2,1},
	\end{equation}
	and 
	\begin{equation}\label{eq: boundary imposition,2}
		\frac{\partial u(\boldsymbol{F}(\hat{x},\hat{y}) )}{\partial \hat{x}}\big|_{(\hat{x},\hat{y})=(\xi_{j_1},\zeta_{j_2})}=\nabla u\cdot\frac{\partial \boldsymbol{F}}{\partial \hat{x}}\big|_{(\hat{x},\hat{y})=(\xi_{j_1},\zeta_{j_2})}=\nabla u\cdot\boldsymbol{U}_{j_1,2;j_2,1} \approx u_{j_1,2;j_2,1},
	\end{equation}
	where the left-hand side denotes the derivative of $ u $ at $ \boldsymbol{F}(\xi_{j_1},\zeta_{j_2}) $ along the image of $\hat{y}=\zeta_{j_2}  $ through $ \boldsymbol{F} $. Other relationships can also be obtained  from the chain rule.
	
	Given the possibility of constructing the geometric mapping and the B-spline basis functions element by element, the numerical calculations in isogeometric analysis can be performed by element-by-element calculations in the same way as in finite element analysis. We take the following inner product as an example.
	\begin{equation*}
		\int_{\Omega}\nabla\boldsymbol{D}^*_{j_1,i_1;j_2,i_2}\cdot\nabla\boldsymbol{D}^*_{j_1+1,i_3;j_2,i_4} \mathrm{d} \boldsymbol{x},
	\end{equation*}
	with $ \mathrm{d} \boldsymbol{x}:=\mathrm{d}x\mathrm{d}y $. Letting $ \mathrm{d} \hat{\boldsymbol{x}}:=\mathrm{d}\hat{x}\mathrm{d}\hat{y} $, we have
	\begin{align*}
		&\int_{\Omega}\nabla\boldsymbol{D}^*_{j_1,i_1;j_2,i_2}\cdot\nabla\boldsymbol{D}^*_{j_1+1,i_3;j_2,i_4} \mathrm{d} \boldsymbol{x}\\
		=&\int_{\hat{\Omega}}(\hat{\nabla}\boldsymbol{F}^{-T}\hat{\nabla}\boldsymbol{B}^*_{j_1,i_1;j_2,i_2})\cdot(\hat{\nabla}\boldsymbol{F}^{-T}\hat{\nabla}\boldsymbol{B}^*_{j_1+1,i_3;j_2,i_4})|\det \hat{\nabla}\boldsymbol{F}| \mathrm{d} \hat{\boldsymbol{x}}\\
		=&\int_{\hat{\Omega}}\hat{\nabla}\boldsymbol{B}^{*T}_{j_1,i_1;j_2,i_2}\mathcal{H}(\boldsymbol{F})\hat{\nabla}\boldsymbol{B}^*_{j_1+1,i_3;j_2,i_4} \mathrm{d} \hat{\boldsymbol{x}}
	\end{align*}
	where $ \hat{\nabla} $ denotes the gradient operator in the parameter domain $ \hat{\Omega} $, and 
	\begin{equation*}
		\mathcal{H}(\boldsymbol{F}):=\hat{\nabla}\boldsymbol{F}^{-1}\hat{\nabla}\boldsymbol{F}^{-T}|\det \hat{\nabla}\boldsymbol{F}|.
	\end{equation*}
	By the element-by-element representations of the mapping $ \boldsymbol{F} $ in \eqref{eq:mapping, B,*,element} and the spline basis functions $ \boldsymbol{B}^{*T}_{j_1,i_1;j_2,i_2} $ in \eqref{eq:new basis} for each direction, we obtain
	\begin{align*}
		&\int_{\hat{\Omega}}\hat{\nabla}\boldsymbol{B}^{*T}_{j_1,i_1;j_2,i_2}\mathcal{H}(\boldsymbol{F})\hat{\nabla}\boldsymbol{B}^*_{j_1+1,i_3;j_2,i_4} \mathrm{d} \hat{\boldsymbol{x}}\\
		=&\int_{K_{j_1,j_2}+K_{j_1,j_2-1}}\hat{\nabla}\boldsymbol{B}^{*T}_{j_1,i_1;j_2,i_2}\mathcal{H}(\boldsymbol{F})\hat{\nabla}\boldsymbol{B}^*_{j_1+1,i_3;j_2,i_4} \mathrm{d} \hat{\boldsymbol{x}}\\
		=&\int_{K_{j_1,j_2}}\hat{\nabla}(\phi^{K_{\xi,j_1}}_{\xi_{j_1},i_1}(\hat{x})\phi^{K_{\zeta,j_2}}_{\zeta_{j_2},i_2}(\hat{y}))^{T}\mathcal{H}(\boldsymbol{F}_{K_{j_1,j_2}})\hat{\nabla}(\phi^{K_{\xi,j_1}}_{\xi_{j_1+1},i_3}(\hat{x})\phi^{K_{\zeta,j_2}}_{\zeta_{j_2},i_4}(\hat{y})) \mathrm{d} \hat{\boldsymbol{x}}\\
		&+\int_{K_{j_1,j_2-1}}\hat{\nabla}(\phi^{K_{\xi,j_1}}_{\xi_{j_1},i_1}(\hat{x})\phi^{K_{\zeta,j_2-1}}_{\zeta_{j_2},i_2}(\hat{y}))^{T}\mathcal{H}(\boldsymbol{F}_{K_{j_1,j_2-1}})\hat{\nabla}(\phi^{K_{\xi,j_1}}_{\xi_{j_1+1},i_3}(\hat{x})\phi^{K_{\zeta,j_2-1}}_{\zeta_{j_2},i_4}(\hat{y})) \mathrm{d} \hat{\boldsymbol{x}}.
	\end{align*}
	It is evident that the computations on all elements follow the same pattern, irrespective of the variable lengths $ h_{\xi,j_1} $ and $ h_{\zeta,j_2} $. Consequently, the table look-up operation  of matrix assembly in isogeometric numerical simulations can be performed for non-uniform knots, not only for uniform knots, e.g., in the literature \cite{Antolin2015,Calabro2017,Pan2020}.
	\section{Numerical experiments}\label{sec4}
%
This section investigates the numerical behavior of element-based B-spline basis function spaces within isogeometric analysis. We resolve the lack of interpolatory properties in conventional control points by constructing interpolatory points via the Hermite scheme, enabling element-wise representation of the geometric mapping $ \boldsymbol{F} $. In this way, such element-wise parametrization of IgA facilitates matrix assembly procedures analogous to those of FEM regardless of knot uniformity.

In Subsection~\ref{subsec5.1}, we demonstrate reduced points updates during uniform refinements for 2D CAD models and identify superconvergence of solution coefficients toward exact function/derivative values; In Subsection~\ref{subsec5.2}, computational superiority over conventional implementations is verified through two 3D CAD models with non-uniform parameterizations.

\subsection{Numerical examples in two dimension}\label{subsec5.1}
	\subsubsection{2D Possion equation}
	\label{subsec4.1}
	\begin{figure}[htbp]
		\centering
		\subfigure[Control net and physical domain $ \Omega $]{
			\begin{minipage}{.45\textwidth}
				\centering
				\includegraphics[width=180pt]{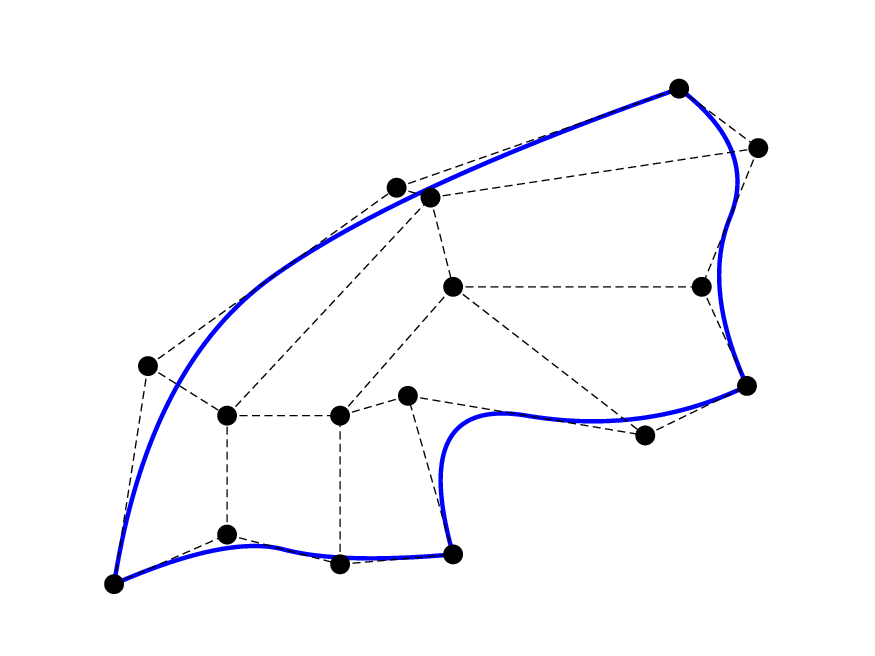}
				\label{fig2.1}
			\end{minipage}
		}
		\subfigure[Exact solution]{
			\begin{minipage}{.45\textwidth}
				\centering
				\includegraphics[width=180pt]{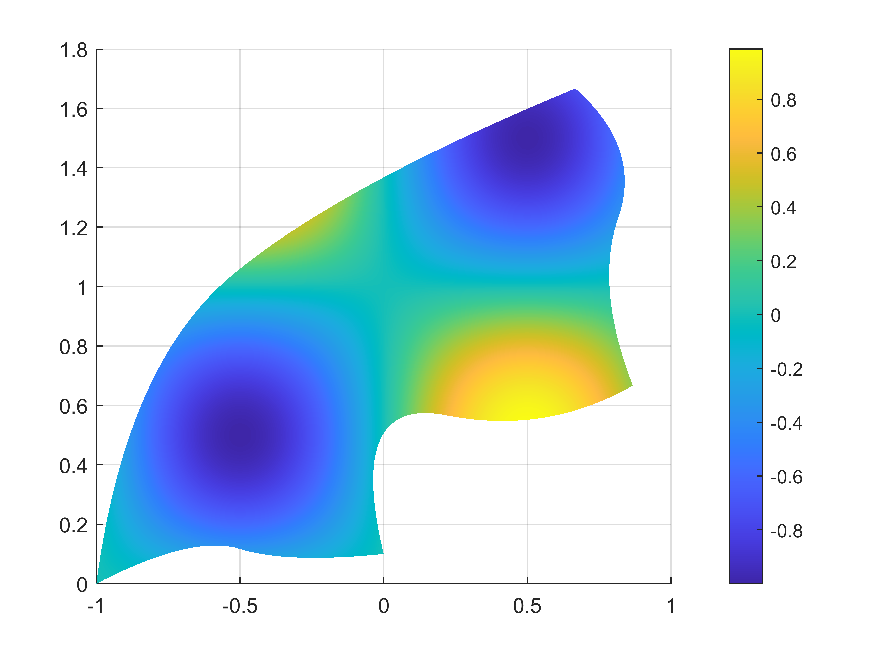}
				\label{fig2.2}
			\end{minipage}
		}
		\caption{The physical domain $ \Omega $ and exact solution  of Example~\ref{example,2}.}\label{fig2}
	\end{figure}
	\begin{table}[htbp]
		\centering
		{\footnotesize 	\begin{tabular}{c c c}
				\hline
				$i_1,i_2 $ & $ \boldsymbol{C}_{1,i_1;1,i_2} $ &$ \boldsymbol{U}_{1,i_1;1,i_2} $ \\
				\hline
				1,1 & (-1.00,0.00) & (-1.00, 0.00)  \\ 
				2,1 & (-0.67,0.17) & ( 0.40, 2.93)\\
				1,2 & (-0.90,0.73) & ( 1.33, 0.67)\\
				2,2 & (-0.67,0.57) & (-1.60,-5.33)\\
				\hline
			\end{tabular}\,\,\,
			\begin{tabular}{c c c}
				\hline
				$i_1,i_2 $ & $ \boldsymbol{C}_{2,i_1;1,i_2} $ &$ \boldsymbol{U}_{2,i_1;1,i_2} $ \\
				\hline
				1,1 & (-0.33,0.07) & ( 0.00, 0.10) \\
				2,1 & ( 0.00,0.10) & (-0.53, 2.13)  \\
				1,2 & (-0.33,0.57) & ( 1.33, 0.13)\\
				2,2 & (-1.33,0.63) & (-2.13, 0.53)\\
				\hline
			\end{tabular}\\[2mm]
			\begin{tabular}{c c c}
				\hline
				$i_1,i_2 $ & $ \boldsymbol{C}_{1,i_1;2,i_2} $ &$ \boldsymbol{U}_{1,i_1;2,i_2} $ \\
				\hline
				1,1 & (-0.17,1.33) & ( 0.67, 1.67)  \\ 
				2,1 & (-0.07,1.30) & ( 3.33, 1.33)\\
				1,2 & ( 0.67,1.67) & ( 0.93,-0.80)\\
				2,2 & ( 0.90,1.47) & ( 2.13,-2.67)\\
				\hline
			\end{tabular}\,\,\,
			\begin{tabular}{c c c}
				\hline
				$i_1,i_2 $ & $ \boldsymbol{C}_{2,i_1;2,i_2} $ &$ \boldsymbol{U}_{2,i_1;2,i_2} $ \\
				\hline
				1,1 & ( 0.00,1.00) & ( 0.87, 0.67) \\
				2,1 & ( 0.57,0.50) & ( 1.20, 0.67)  \\
				1,2 & ( 0.73,1.00) & ( 0.53,-1.33)\\
				2,2 & ( 0.87,0.67) & (-6.93, 2.67)\\
				\hline
		\end{tabular}}
		\caption{The control points $ \boldsymbol{C}_{j_1,i_1;j_2,i_2} $ for the control net in Fig.~\ref{fig2}, and the related interpolatory points $ \boldsymbol{U}_{j_1,i_1;j_2,i_2} $ derived from the Hermite interpolation property.}
		\label{table,example2}
	\end{table}
	\begin{example}\label{example,2}
		In this example, we consider the parameter domain $ \hat{\Omega}:=\langle\Xi_{\xi}\rangle\otimes\langle\Xi_{\zeta}\rangle  $ in the $ \hat{x}\hat{y} $-plane with
		\begin{equation*}
			\Xi_{\xi} =\Xi_{\zeta}=\{ 0,0,0,\frac{1}{2},1,1,1\}.
		\end{equation*}
		The physical domain $ \Omega$ is obtained through the geometric mapping
		\begin{equation*}
			\boldsymbol{F}(\hat{x},\hat{y}) = \sum_{j_1=1,j_2=1}^{2,2}\sum_{i_1=1,i_2=1}^{2,2}\boldsymbol{C}_{j_1,i_1;j_2,i_2}\boldsymbol{B}_{j_1,i_1;j_2,i_2}(\hat{x},\hat{y}),
		\end{equation*}
		where the control points $ \boldsymbol{C}_{j_1,i_1;j_2,i_2} $ are given  in Table~\ref{table,example2}. The control net and the physical domain are illustrated in Fig.~\ref{fig2.1}.
		In addition, using the new basis \eqref{def:new basis,2D} for the mapping \eqref{eq:mapping, B} gives 
		\begin{equation*}
			\boldsymbol{F}(\hat{x},\hat{y}) = \sum_{j_1=1,j_2=1}^{2,2}\sum_{i_1=1,i_2=1}^{2,2}\boldsymbol{U}_{j_1,i_1;j_2,i_2}\boldsymbol{B}^{*}_{j_1,i_1;j_2,i_2}(\hat{x},\hat{y}),
		\end{equation*}
		where the points $ \boldsymbol{U}_{j_1,i_1;j_2,i_2} $ with interpolatory properties are given in Table~\ref{table,example2}.	We solve the 2D Possion equation 
		\begin{eqnarray*}
			\left\{
			\begin{aligned}
				&-\Delta u =f\quad \mbox{in $\Omega$},\\
				&u=g \quad \mbox{on $\partial\Omega$},
			\end{aligned}
			\right.
		\end{eqnarray*}  
		with the following exact solution:
		\begin{equation*}
			u(x,y) = \sin(\pi x)\sin{(\pi y)},
		\end{equation*}
		where the functions $ f $ and $ g $ are given by the exact solution. The image of the exact solution is presented in Fig.~\ref{fig2.2}. 
	\end{example}
	\begin{figure}[htbp]
		\centering
		\subfigure[$ \Omega:=\boldsymbol{F}(\hat{\Omega}) $]{
			\begin{minipage}{.3\textwidth}
				\centering
				\includegraphics[width=130pt]{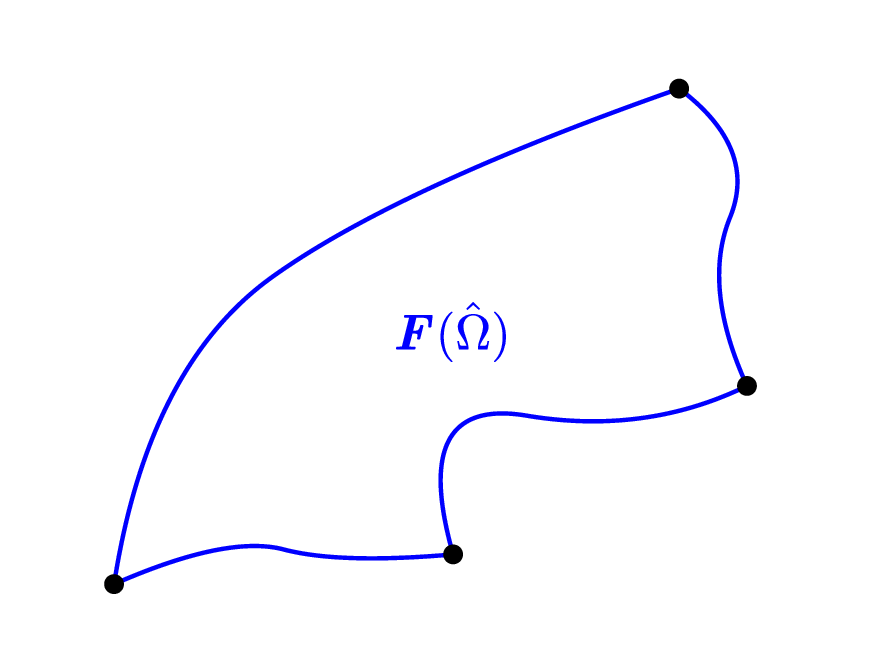}
				\label{fig5.1}
			\end{minipage}
		}
		\subfigure[$ \boldsymbol{F}(\mathcal{T}^{(1)}_{h,ur}) $]{
			\begin{minipage}{.3\textwidth}
				\centering
				\includegraphics[width=130pt]{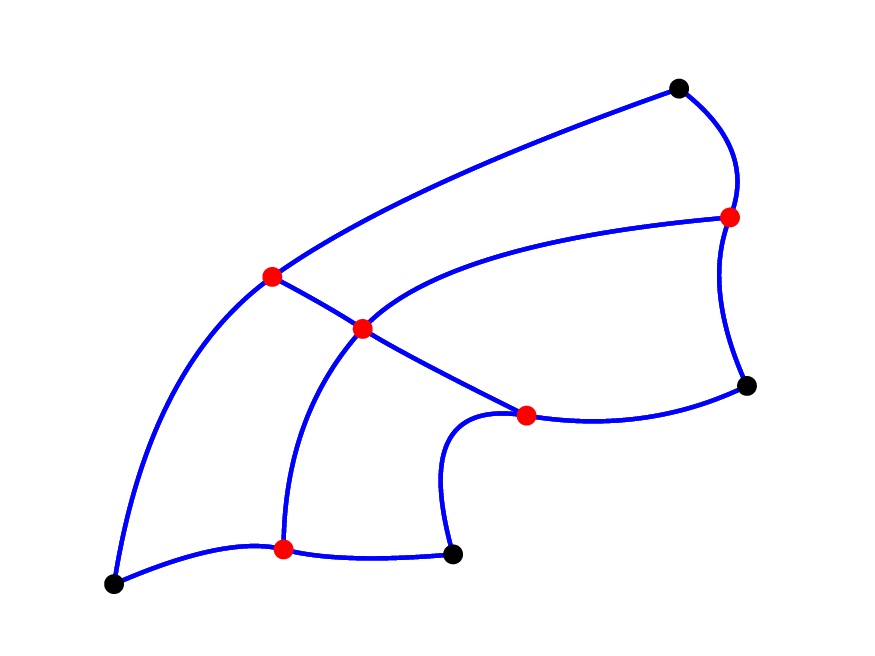}
				\label{fig5.2}
			\end{minipage}
		}
		\subfigure[$ \boldsymbol{F}(\mathcal{T}^{(2)}_{h,ur}) $]{
			\begin{minipage}{.3\textwidth}
				\centering
				\includegraphics[width=130pt]{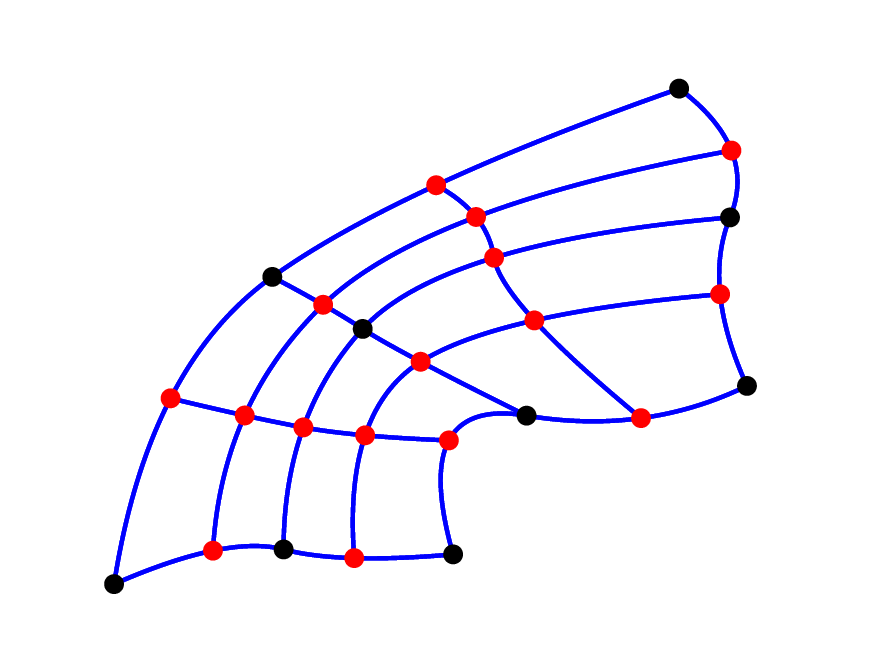}
				\label{fig5.3}
			\end{minipage}
		}
		\caption{The images of the meshes $ \mathcal{T}^{(n)}_{h,ur} $ through the geometric mapping $ \boldsymbol{F} $ in Example~\ref{example,2}, where the red dots are the images of the newly generated vertices from $ \mathcal{T}^{(n-1)}_{h,ur} $.}\label{fig5}
	\end{figure}
	Obviously, for $ p=2 $, the parameter domain $ \hat{\Omega} $ in Example~\ref{example,2} admits a partition $ \mathcal{T}_h:=\{K_{1,1}\} $ with $ K_{1,1}=\hat{\Omega} $. It is considered that the mesh $ \mathcal{T}_h $  is uniformly refined, and the $ n $-th uniform refinement is denoted by $ \mathcal{T}^{(n)}_{h,ur} $. The $ n $-th refinement of the physical domain $ \Omega $ is the image of the refined mesh $ \mathcal{T}^{(n)}_{h,ur} $ through the geometric mapping $ \boldsymbol{F} $. See Fig.~\ref{fig5} for two refinements of the physical domain. As demonstrated in the end of Subsubsection~\ref{subsec3.1}, the coefficients in the mapping form \eqref{eq:mapping, B,*} associated with the newly generated vertices  in each refinement are the only elements that undergo updating. 
	
	We use the approximation solution $ u_h(x,y) $ in $ \mathcal{V}_h^*\subset H^1(\Omega) $, which takes the form 
	\begin{equation*}
		u_h(x,y)=\sum_{j_1=1,j_2=1}^{2^{n},2^{n}}\sum_{i_1=1,i_2=1}^{2,2}u_{j_1,i_1;j_2,i_2}(\boldsymbol{B}^*_{j_1,i_1;j_2,i_2}\circ\boldsymbol{F}^{-1})(x,y).
	\end{equation*} 
	By the Hermite interpolation property, the boundary condition on the physical domain $ \Omega $ can be used to determine certain unknows $ u_{j_1,i_1;j_2,i_2} $ associated with the boundary points on the parameter domain $ \hat{\Omega} $, e.g., see \eqref{eq: boundary imposition,1} and \eqref{eq: boundary imposition,2}. The $ L^2 $-norm error $ \|u-u_h\|_{0} $ is not only taken into consideration; the following maximum-norm errors for the mesh vertices are also evaluated.
	\begin{equation*}
		\|u-u_h\|_{0,\infty,\mathcal{T}_{h,ur}^{(n)}}:= \max\{|u_{j_1,1;j_2,1}-u(\boldsymbol{F}(\xi_{j_1},\zeta_{j_2}) )|,\,\,\,j_1,j_2=1,2,3,...,2^n \},
	\end{equation*}
	and 
	\begin{align*}
		&|u-u_h|_{1,\infty,\mathcal{T}_{h,ur}^{(n)}}:= \max\bigg\{\bigg|u_{j_1,2;j_2,1}-\frac{\partial u(\boldsymbol{F}(\hat{x},\hat{y}) )}{\partial \hat{x}}\big|_{(\hat{x},\hat{y})=(\xi_{j_1},\zeta_{j_2})}\bigg|+\\
		&\bigg|u_{j_1,1;j_2,2}-\frac{\partial u(\boldsymbol{F}(\hat{x},\hat{y}) )}{\partial \hat{y}}\big|_{(\hat{x},\hat{y})=(\xi_{j_1},\zeta_{j_2})}\bigg|,\,\,\,j_1,j_2=1,2,3,...,2^n \bigg\},
	\end{align*}
	The error results under consideration are displayed in Table~\ref{table1}. The global $ L^2 $-norm errors show the optimal approximation power. Furthermore, the maximum-norm error $ \|u-u_h\|_{0,\infty,\mathcal{T}_{h,ur}^{(n)}} $ show the superconvergence property of function values for $ p = 2 $, which is consistent with the conclusion of the literature, e.g., \cite{Cosmin2015,Kumar2017}. Although control points in the physical domain may exhibit irregular distributions (see Fig.~\ref{fig2.1}), superconvergence still occurs as long as the parameter domain remains uniformly partitioned. While rigorous proof remains elusive, numerical results in isogeometric analysis validate this behavior, revealing a striking contrast with finite element analysis where uniform physical meshes are usually required for superconvergence. This demonstrates the superior practical value of the isogeometric framework in high-precision computational theories.
	\begin{table}[htbp]
		\centering
		\begin{tabular}{c c c c c c c}
			\hline
			n & $  \|u-u_h\|_{0} $ & order & $ \|u-u_h\|_{0,\infty,\mathcal{T}_{h,ur}^{(n)}} $ & order &$ |u-u_h|_{1,\infty,\mathcal{T}_{h,ur}^{(n)}} $& order \\
			\hline
			3 & 1.7136e-03 & $\setminus$ & 2.1246e-03  & $\setminus$ & 2.6996e-01&$\setminus$ \\ 
			4 & 1.2227e-04 &3.8089 &2.5645e-04  & 3.0504 & 8.0462e-02&1.7464\\
			5 & 1.1578e-05 &3.4005 &1.9775e-05  & 3.6970 & 2.2329e-02 &1.8494\\
			6 & 1.3027e-06 &3.1518 &1.4327e-06  & 3.7869 & 5.6319e-03&1.9872\\
			7 & 1.5689e-07 &3.0537 &9.7855e-08  & 3.8719 & 1.4095e-03&1.9984\\
			\hline
		\end{tabular}
		\caption{Error results for Example~\ref{example,2}.}\label{table1}
	\end{table}
	\subsubsection{2D biharmonic equation}
	\label{subsec4.2}
	\begin{figure}[htbp]
		\centering
		\subfigure[Control net and physical domain $ \Omega $]{
			\begin{minipage}{.45\textwidth}
				\centering
				\includegraphics[width=180pt]{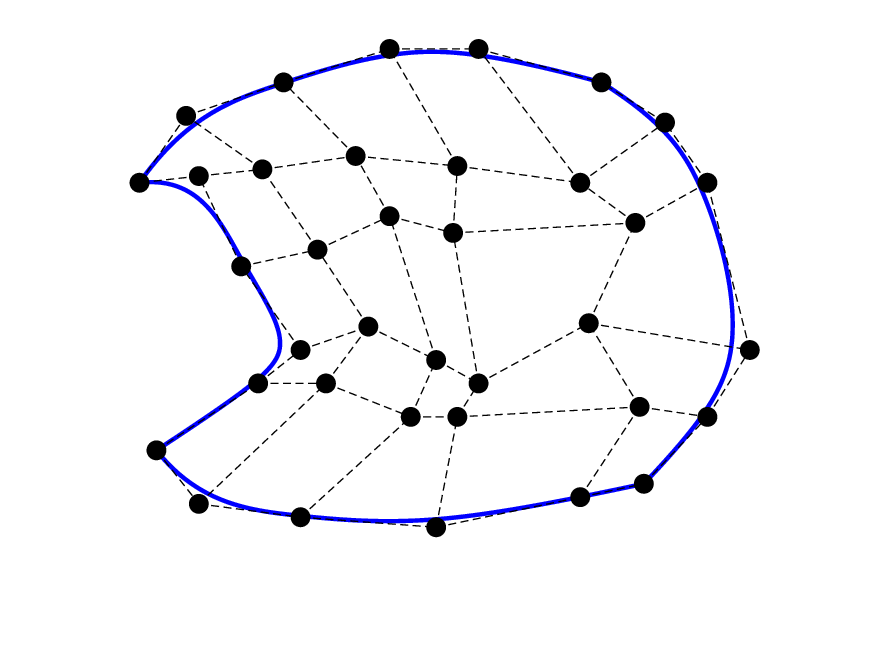}
				\label{fig4.1}
			\end{minipage}
		}
		\subfigure[Exact solution]{
			\begin{minipage}{.45\textwidth}
				\centering
				\includegraphics[width=180pt]{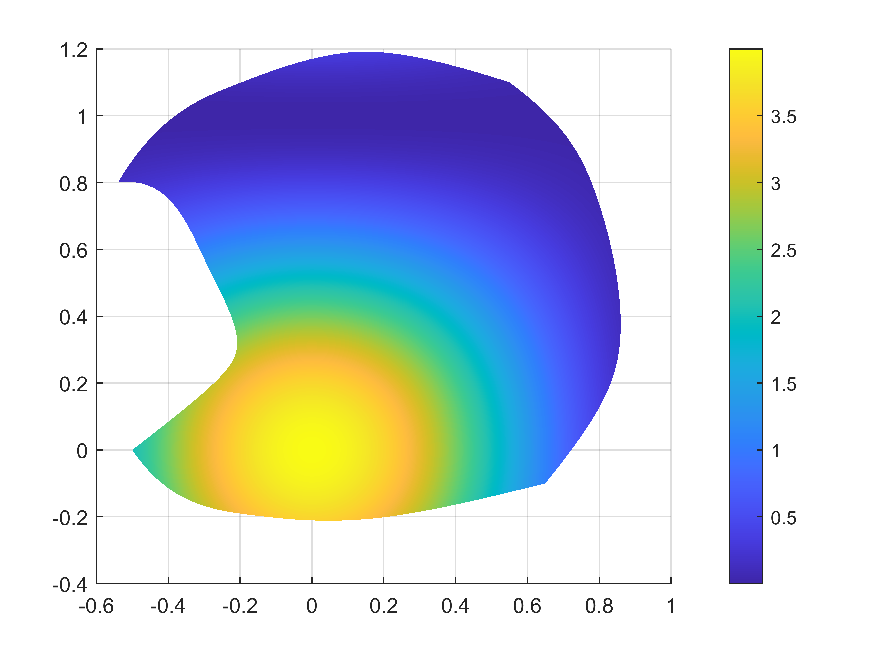}
				\label{fig4.2}
			\end{minipage}
		}
		\caption{The physical domain $ \Omega $ and exact solution  of Example~\ref{example,4}.}\label{fig4}
	\end{figure}
	\begin{table}[htbp]
		\centering
		{\scriptsize  \begin{tabular}{c c c}
				\hline
				$i_1,i_2 $ & $ \boldsymbol{C}_{1,i_1;1,i_2} $ &$ \boldsymbol{U}_{1,i_1;1,i_2} $ \\
				\hline
				1,1 & (-0.50, 0.00) & (-0.50, 0.00)  \\ 
				2,1 & (-0.26, 0.20) & ( 0.90,-1.44)\\
				3,1 & (-0.16, 0.30) & ( 1.08, 7.56)\\
				1,2 & (-0.40,-0.16) & ( 2.16, 1.80)\\
				2,2 & (-0.10, 0.20) & ( 4.86, 13.0)\\ 
				3,2 & ( 0.00, 0.37) & (-38.9,-92.3)\\
				1,3 & (-0.16,-0.20) & (-10.3,-8.1)\\
				2,3 & ( 0.10, 0.10) & (-29.2,-60.8)\\
				3,3 & ( 0.16, 0.27) & ( 204, 452)\\ 
				\hline
			\end{tabular}\,\,\,
			\begin{tabular}{c c c}
				\hline
				$i_1,i_2 $ & $ \boldsymbol{C}_{2,i_1;1,i_2} $ &$ \boldsymbol{U}_{2,i_1;1,i_2} $ \\
				\hline
				1,1 & (-0.30,0.55) & (-0.54, 0.80)  \\ 
				2,1 & (-0.40,0.82) & ( 0.99, 1.80)\\
				3,1 & (-0.54,0.80) & ( 0.27,-8.10)\\
				1,2 & (-0.12,0.60) & (-1.26,-0.18)\\
				2,2 & (-0.25,0.84) & (-3.24, 14.6)\\ 
				3,2 & (-0.43,1.00) & ( 21.9,-72.9)\\
				1,3 & ( 0.05,0.70) & (-4.86,-8.37)\\
				2,3 & (-0.03,0.88) & (-12.2, 94.8)\\
				3,3 & (-0.20,1.10) & ( 51.0, 437)\\ 
				\hline
			\end{tabular}\\[2mm]
			\begin{tabular}{c c c}
				\hline
				$i_1,i_2 $ & $ \boldsymbol{C}_{1,i_1;2,i_2} $ &$ \boldsymbol{U}_{1,i_1;2,i_2} $ \\
				\hline
				1,1 & (0.16,-0.23) & ( 0.65,-0.10)  \\ 
				2,1 & (0.21, 0.10) & ( 1.35, 0.36)\\
				3,1 & (0.26, 0.20) & (-1.08,-0.27)\\
				1,2 & (0.50,-0.14) & ( 1.35, 1.80)\\
				2,2 & (0.64, 0.13) & ( 0.81,-5.67)\\ 
				3,2 & (0.52, 0.38) & (-1.70,-1.94)\\
				1,3 & (0.65,-0.10) & (-5.40,-5.40)\\
				2,3 & (0.80, 0.10) & ( 48.6, 21.9)\\
				3,3 & (0.90, 0.30) & ( 547, -6.56)\\ 
				\hline
			\end{tabular}\,\,\,
			\begin{tabular}{c c c}
				\hline
				$i_1,i_2 $ & $ \boldsymbol{C}_{2,i_1;2,i_2} $ &$ \boldsymbol{U}_{2,i_1;2,i_2} $ \\
				\hline
				1,1 & (0.20,0.65) & ( 0.55, 1.10)  \\ 
				2,1 & (0.21,0.85) & ( 2.61,-0.90)\\
				3,1 & (0.05,1.20) & ( 9.99,-5.40)\\
				1,2 & (0.63,0.68) & (-1.35, 1.08)\\
				2,2 & (0.50,0.80) & ( 7.29,-22.7)\\ 
				3,2 & (0.26,1.20) & ( 63.2,-148)\\
				1,3 & (0.80,0.80) & (-5.40, 1.62)\\
				2,3 & (0.70,0.98) & ( 36.5, -151)\\
				3,3 & (0.55,1.10) & ( 233,-1035)\\ 
				\hline
		\end{tabular}}
		\caption{The control points $ \boldsymbol{C}_{j_1,i_1;j_2,i_2} $ for the control net in Fig.~\ref{fig4}, and the related interpolatory points $ \boldsymbol{U}_{j_1,i_1;j_2,i_2} $ derived from the Hermite interpolation property.}
		\label{table,example,4}
	\end{table}
	\begin{example}\label{example,4}
		In this example, we consider the parameter domain $ \hat{\Omega}:=\langle\Xi_{\xi}\rangle\otimes\langle\Xi_{\zeta}\rangle  $ in the $ \hat{x}\hat{y} $-plane, where
		\begin{equation*}
			\Xi_{\xi} =\Xi_{\zeta}=\{ 0,0,0,0,\frac{1}{3},\frac{2}{3},1,1,1,1\}.
		\end{equation*}
		The physical domain $ \Omega$ is obtained through the geometric mapping
		\begin{equation*}
			\boldsymbol{F}(\hat{x},\hat{y}) = \sum_{j_1=1,j_2=1}^{2,2}\sum_{i_1=1,i_2=1}^{3,3}\boldsymbol{C}_{j_1,i_1;j_2,i_2}\boldsymbol{B}_{j_1,i_1;j_2,i_2}(\hat{x},\hat{y}),
		\end{equation*}
		where the control points $ \boldsymbol{C}_{j_1,i_1;j_2,i_2} $ are given in Table~\ref{table,example,4}. The control net and the physical domain are illustrated in Fig.~\ref{fig2.1}. Moreover, using the new basis \eqref{def:new basis,2D} for the mapping \eqref{eq:mapping, B} gives 
		\begin{equation*}
			\boldsymbol{F}(\hat{x},\hat{y}) = \sum_{j_1=1,j_2=1}^{2,2}\sum_{i_1=1,i_2=1}^{3,3}\boldsymbol{U}_{j_1,i_1;j_2,i_2}\boldsymbol{B}^{*}_{j_1,i_1;j_2,i_2}(\hat{x},\hat{y}),
		\end{equation*}
		where  the points $ \boldsymbol{U}_{j_1,i_1;j_2,i_2} $ with interpolatory properties in the form \eqref{eq:mapping, B,*} are given in Table~\ref{table,example,4}.	We solve the 2D biharmonic equation 
		\begin{eqnarray*}\label{4th-order PDE}
			\left\{
			\begin{aligned}
				&\Delta^2 u +u =f\quad \mbox{in $ \Omega $},\\
				&u=g_1 \quad \mbox{on $ \partial\Omega $},\\
				&\nabla u\cdot \textbf{n} =g_2\quad \mbox{on $ \partial\Omega $},
			\end{aligned}
			\right.
		\end{eqnarray*}
		with the following exact solution:
		\begin{equation*}
			u(x,y) = (1+\cos(\pi x))(1+\cos{(\pi y)}),
		\end{equation*}
		where the functions $ f $, $ g_1 $ and $ g_2 $ are given by the exact solution. The image of the exact solution is presented in Fig.~\ref{fig4.2}. 
	\end{example}
	\begin{figure}[htbp]
		\centering
		\subfigure[$ \Omega:=\boldsymbol{F}(\hat{\Omega}) $]{
			\begin{minipage}{.3\textwidth}
				\centering
				\includegraphics[width=130pt]{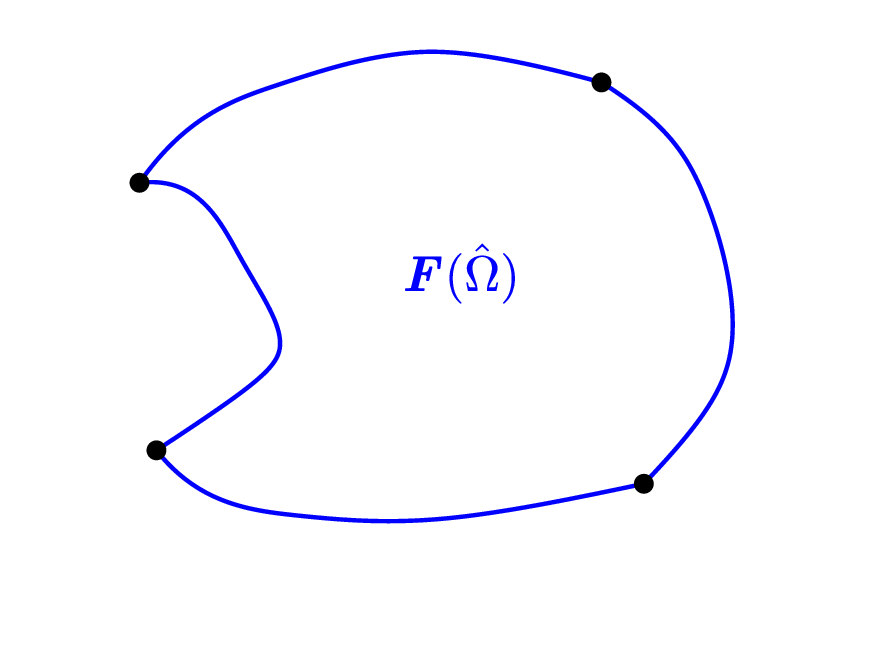}
				\label{fig6.1}
			\end{minipage}
		}
		\subfigure[$ \boldsymbol{F}(\mathcal{T}^{(1)}_{h,ur}) $]{
			\begin{minipage}{.3\textwidth}
				\centering
				\includegraphics[width=130pt]{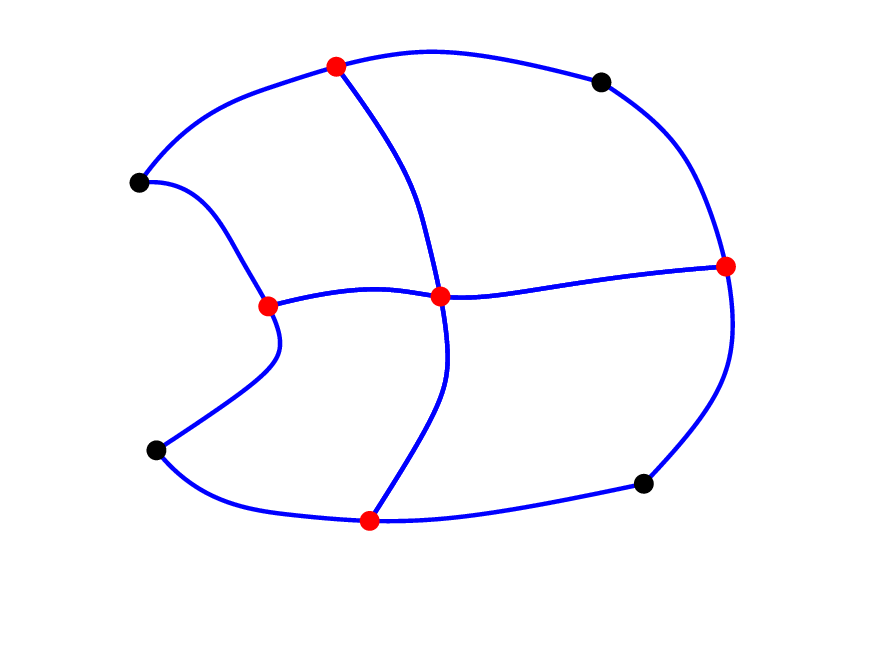}
				\label{fig6.2}
			\end{minipage}
		}
		\subfigure[$ \boldsymbol{F}(\mathcal{T}^{(2)}_{h,ur}) $]{
			\begin{minipage}{.3\textwidth}
				\centering
				\includegraphics[width=130pt]{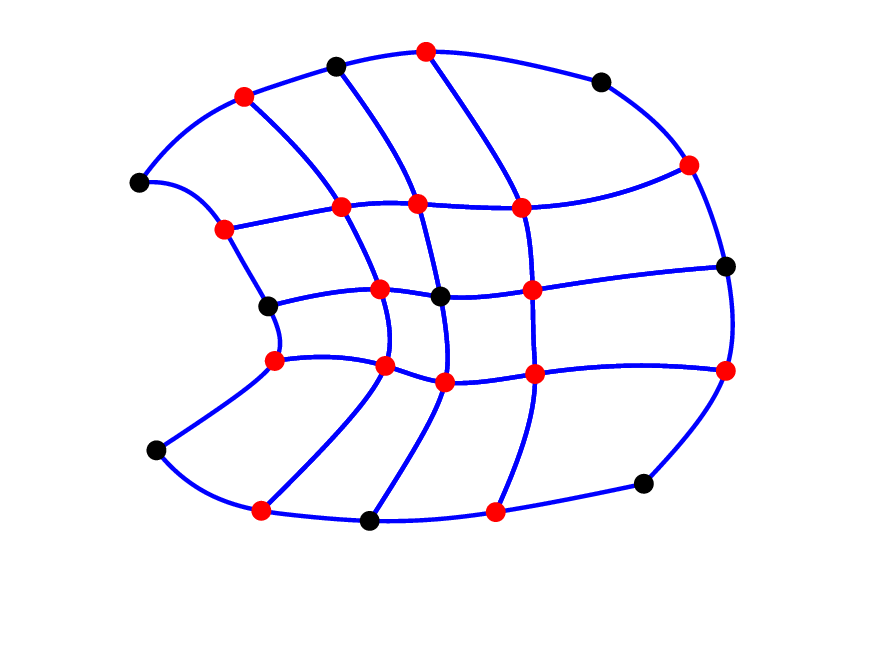}
				\label{fig6.3}
			\end{minipage}
		}
		\caption{The images of the meshes $ \mathcal{T}^{(n)}_{h,ur} $ through the geometric mapping $ \boldsymbol{F} $ in Example~\ref{example,4}, where the red dots are the images of the newly generated vertices from $ \mathcal{T}^{(n-1)}_{h,ur} $.}\label{fig6}
	\end{figure}
	Obviously, for $ p=3 $, the parameter domain $ \hat{\Omega} $ in Example~\ref{example,4} admits a partition $ \mathcal{T}_h:=\{K_{1,1}\} $ with $ K_{1,1}=\hat{\Omega} $. Consider the same notations as Subsubsection~\ref{subsec4.1}. Then, we use the approximation solution $ u_h(x,y) $ taking the  form 
	\begin{equation*}
		u_h(x,y)=\sum_{j_1=1,j_2=1}^{2^{n},2^{n}}\sum_{i_1=1,i_2=1}^{3,3}u_{j_1,i_1;j_2,i_2}(\boldsymbol{B}^*_{j_1,i_1;j_2,i_2}\circ\boldsymbol{F}^{-1})(x,y).
	\end{equation*} 
	The following two maximum-norm errors for the mesh vertices are also in consideration.
	\begin{align*}
		&|u-u_h|_{2,\infty,\mathcal{T}_{h,ur}^{(n)}}:= \max\bigg\{\bigg|u_{j_1,3;j_2,1}-\frac{\partial^2 u(\boldsymbol{F}(\hat{x},\hat{y}) )}{\partial \hat{x}^2}\big|_{(\hat{x},\hat{y})=(\xi_{j_1},\zeta_{j_2})}\bigg|+\\
		&\bigg|u_{j_1,1;j_2,3}-\frac{\partial^2 u(\boldsymbol{F}(\hat{x},\hat{y}) )}{\partial \hat{y}^2}\big|_{(\hat{x},\hat{y})=(\xi_{j_1},\zeta_{j_2})}\bigg|,\,\,\,j_1,j_2=1,2,3,...,2^n \bigg\},
	\end{align*}
	and
	\begin{align*}
		&|u-u_h|_{1,\infty,\mathcal{T}_{h,ur}^{(n)}\cap \hat{\Omega}_{in}}:= \max\bigg\{\bigg|u_{j_1,2;j_2,1}-\frac{\partial u(\boldsymbol{F}(\hat{x},\hat{y}) )}{\partial \hat{x}}\big|_{(\hat{x},\hat{y})=(\xi_{j_1},\zeta_{j_2})}\bigg|+\\
		&\bigg|u_{j_1,1;j_2,2}-\frac{\partial u(\boldsymbol{F}(\hat{x},\hat{y}) )}{\partial \hat{y}}\big|_{(\hat{x},\hat{y})=(\xi_{j_1},\zeta_{j_2})}\bigg|,\,\mbox{$ j_1,j_2=1,2,3,...,2^n $, s.t., $ (\xi_{j_1},\zeta_{j_2})\in\hat{\Omega}_{in} $} \bigg\},
	\end{align*}
	where $ \hat{\Omega}_{in}:=[0.1,0.9]^2 $.
	The error results under consideration are displayed in Table~\ref{table2}. The global $ L^2 $-norm errors show the optimal approximation power. Furthermore, the maximum-norm error $ |u-u_h|_{1,\infty,\mathcal{T}_{h,ur}^{(n)}\cap \hat{\Omega}_{in}} $ in the inner domain $ \boldsymbol{F}(\hat{\Omega}_{in}) $ show the  superconvergence property of the first derivative for $ p = 3 $. This is also consistent with the conclusion of Subsubsection~\ref{subsec4.1} and previous studies \cite{Cosmin2015,Kumar2017}.
	\begin{table}[htbp]
		\centering
		\begin{tabular}{c c c c c c c}
			\hline
			n & $  \|u-u_h\|_{0} $ & order & $ \|u-u_h\|_{0,\infty,\mathcal{T}_{h,ur}^{(n)}} $ & order &$ |u-u_h|_{1,\infty,\mathcal{T}_{h,ur}^{(n)}} $& order \\
			\hline
			3 &1.7239e-04& $\setminus$ &    2.2023e-04  & $\setminus$ &    4.5743e-03&$\setminus$ \\ 
			4 &9.8334e-06& 4.1318&   2.5762e-05& 3.0957&    8.3218e-04&   2.4586\\
			5 &5.7779e-07& 4.0891&   2.2805e-06 & 3.4979&    1.0658e-04&   2.9650\\
			6 &3.5150e-08& 4.0389 & 1.9161e-07 & 3.5731 &    1.4148e-05&   2.9132\\
			\hline
		\end{tabular}\\[2mm]
		\begin{tabular}{ c c c c}
			\hline
			$ |u-u_h|_{2,\infty,\mathcal{T}_{h,ur}^{(n)}} $ & order &$ |u-u_h|_{1,\infty,\mathcal{T}_{h,ur}^{(n)}\cap \hat{\Omega}_{in} } $& order \\
			\hline 
			1.0410e+01  & $\setminus$ &    4.5743e-03 &$\setminus$ \\ 
			3.1491e+00 &    1.7250&    3.2349e-04&  3.8218\\
			8.0843e-01&    1.9617&   2.0052e-05 &   4.0119\\
			2.0355e-01&    1.9898&    1.4754e-06 &  3.7646\\
			\hline
		\end{tabular}
		\caption{Error results for Example~\ref{example,4}.}\label{table2}
	\end{table}
\subsection{Numerical examples in three dimension}\label{subsec5.2}
This subsection investigates the computational efficiency of element-based B-spline spaces in 3D isogeometric analysis under non-uniform parametric grids. By leveraging the element-wise representations of both the geometric mapping  $ \boldsymbol{F} $ and basis functions, our implementation achieves a unified assembly pattern across all elements, mirroring the procedural efficiency of finite element methods. Numerical benchmarks are performed using an in-house MATLAB framework following the proposed methodology, with comparative results obtained from the MATLAB GeoPDEs library \cite{GeoPDEs} under identical hardware configurations (11th Gen Intel Core i5-11300H, 16 GB RAM).

The proposed isogeometric workflow based on element-based B-spline spaces introduces only one extra step compared to standard isoparametric finite element analysis: converting CAD control points into interpolatory points via Hermite interpolation. This preprocessing step adds minimal computational cost to the overall simulation. 

We test the method on two 3D CAD models: a duck and a human face, see Fig.~\ref{fig: 3D models}. Both 3D models share an identical parameter domain $ \hat{\Omega}:=\langle\Xi_{\xi}\rangle\otimes\langle\Xi_{\zeta}\rangle\otimes\langle\Xi_{\eta}\rangle  $ in the $ \hat{x}\hat{y}\hat{z}$-space with the non-uniform configuration:
\begin{align*}
\Xi_{\xi} = \{&0,0,0,0,0.04,0.08,0.12,0.18,0.24,0.3,0.3233,0.3467,0.37,0.4133,0.4567,0.5,\\
	   &0.5333,0.5667,0.6,0.65,0.7,0.75,0.8,0.85,0.9,0.9333,0.9667,1,1,1,1\},\\[1.5mm]
	\Xi_{\zeta} = \{&0,0,0,0,0.04,0.08,0.12,0.1467,0.1733,0.2,0.2567,0.3133, 0.37,0.4133,0.4567,0.5,\\
	&0.5167,0.5333,0.55,0.6167,0.6833,0.75,0.7833,0.8167,0.85,0.9,0.95,1,1 ,1,1\},\\[1.5mm]
	\Xi_{\eta} = \{&0,0,0,0,0.0333,0.0667,0.1,0.15,0.2,0.25,0.2667,0.2833, 0.3,0.3667,0.4333,0.5,\\
	&0.54,0.58,0.62,0.6633,0.7067,0.75,0.79,0.83,0.87,0.9133,0.9567,1,1,1, 1 \}.
\end{align*}
The physical domain $ \Omega_{duck}$ and $ \Omega_{face}$ are obtained through the geometric mappings
\begin{equation*}
	\boldsymbol{F}_{duck}(\hat{x},\hat{y},\hat{z}) = \sum_{j_1=1,j_2=1,j_3=1}^{9,9,9}\sum_{i_1=1,i_2=1,i_3=1}^{3,3,3}\boldsymbol{C}^{duck}_{j_1,i_1;j_2,i_2;j_3,i_3}\boldsymbol{B}_{j_1,i_1;j_2,i_2;j_3,i_3}(\hat{x},\hat{y},\hat{z}),
\end{equation*}
and 
\begin{equation*}
	\boldsymbol{F}_{face}(\hat{x},\hat{y},\hat{z}) = \sum_{j_1=1,j_2=1,j_3=1}^{9,9,9}\sum_{i_1=1,i_2=1,i_3=1}^{3,3,3}\boldsymbol{C}^{face}_{j_1,i_1;j_2,i_2;j_3,i_3}\boldsymbol{B}_{j_1,i_1;j_2,i_2;j_3,i_3}(\hat{x},\hat{y},\hat{z}),
\end{equation*}
respectively, where $ \boldsymbol{C}^{duck}_{j_1,i_1;j_2,i_2;j_3,i_3}$ and  $ \boldsymbol{C}^{face}_{j_1,i_1;j_2,i_2;j_3,i_3}$ are the control points  of the 3D CAD models.  For the above geometric mapping, using the new basis \eqref{def:new basis,2D} in the 3D settings gives 
\begin{equation*}
	\boldsymbol{F}_{duck}(\hat{x},\hat{y},\hat{z}) = \sum_{j_1=1,j_2=1,j_3=1}^{9,9,9}\sum_{i_1=1,i_2=1,i_3=1}^{3,3,3}\boldsymbol{U}^{duck}_{j_1,i_1;j_2,i_2;j_3,i_3}\boldsymbol{B}^*_{j_1,i_1;j_2,i_2;j_3,i_3}(\hat{x},\hat{y},\hat{z}),
\end{equation*}
and 
\begin{equation*}
	\boldsymbol{F}_{face}(\hat{x},\hat{y},\hat{z}) = \sum_{j_1=1,j_2=1,j_3=1}^{9,9,9}\sum_{i_1=1,i_2=1,i_3=1}^{3,3,3}\boldsymbol{U}^{face}_{j_1,i_1;j_2,i_2;j_3,i_3}\boldsymbol{B}^*_{j_1,i_1;j_2,i_2;j_3,i_3}(\hat{x},\hat{y},\hat{z}),
\end{equation*}
where $ \boldsymbol{U}^{duck}_{j_1,i_1;j_2,i_2;j_3,i_3}$ and  $ \boldsymbol{U}^{face}_{j_1,i_1;j_2,i_2;j_3,i_3}$ are the interpolatory  points  of the 3D CAD models. Then, each of the global mappings admits an element-wise decomposition following the formulation in the equation \eqref{eq:mapping, B,*,element}, which serves as its two-dimensional counterpart.
	\begin{figure}[htbp]
	\centering
	\subfigure[duck]{
		\begin{minipage}{.45\textwidth}
			\centering
			\includegraphics[width=100pt]{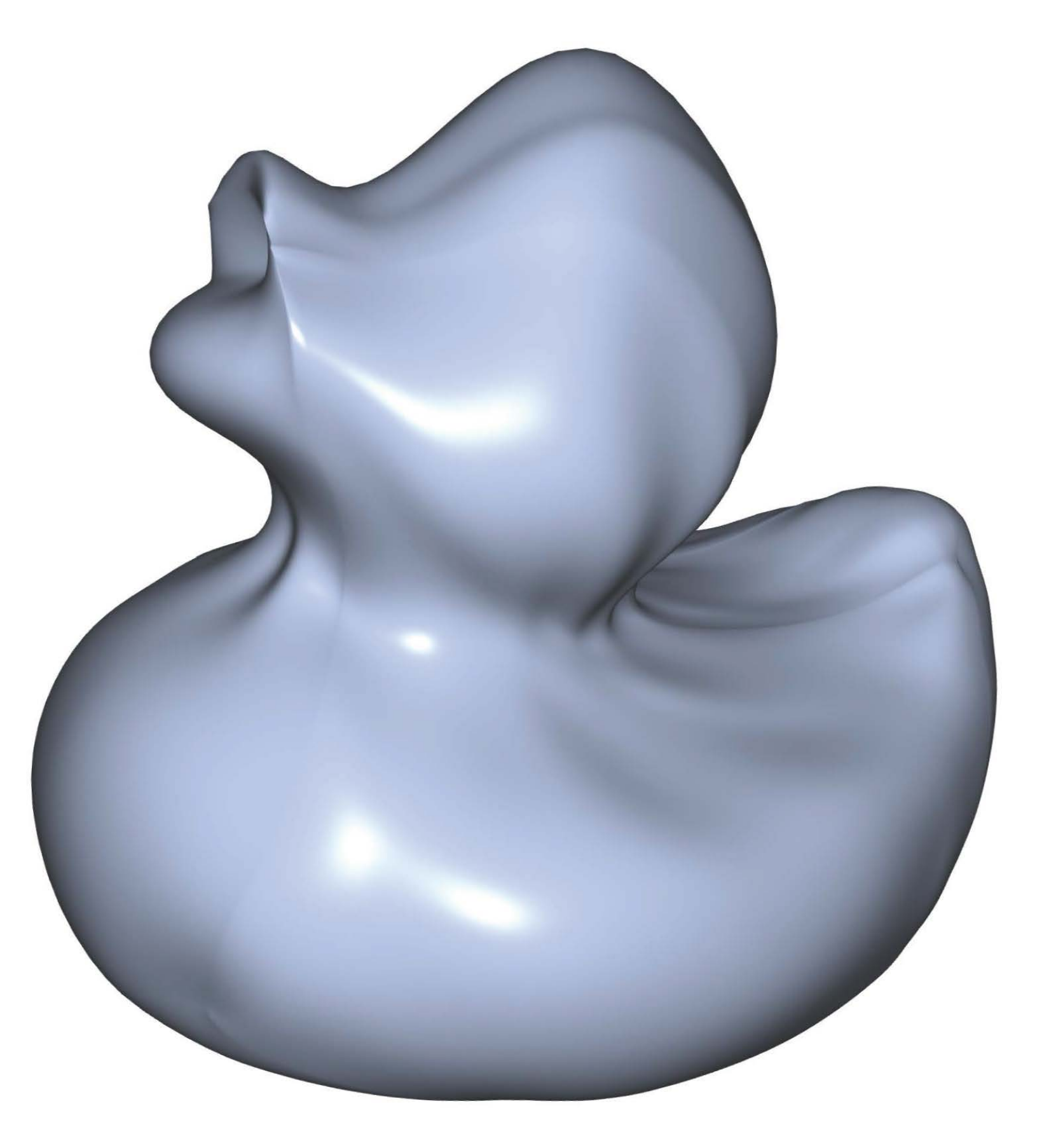}
		\end{minipage}
	}
	\subfigure[human face]{
	\begin{minipage}{.45\textwidth}
		\centering
		\includegraphics[width=75pt]{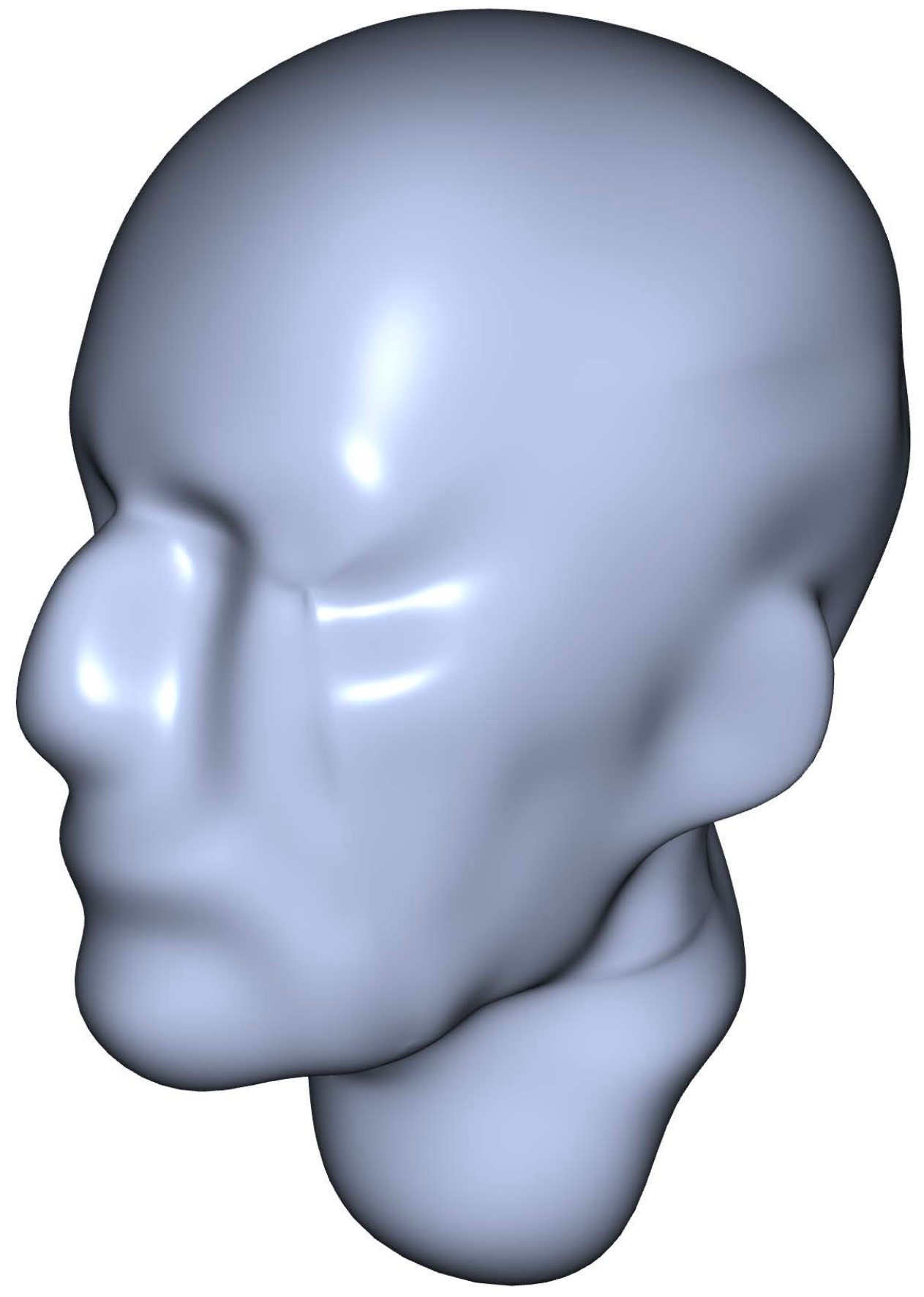}
	\end{minipage}
}
	\caption{Graphics of 3D CAD models.}\label{fig: 3D models}
\end{figure}
\begin{example}\label{example,5}
	We solve the 3D Possion equation 
	\begin{eqnarray*}
		\left\{
		\begin{aligned}
			&-\Delta u =f\quad \mbox{in $\Omega_{duck}(\Omega_{face})$},\\
			&u=g \quad \mbox{on $\partial\Omega_{duck}(\partial\Omega_{face})$},
		\end{aligned}
		\right.
	\end{eqnarray*}  
	with the following exact solution:
	\begin{equation*}
		u(x,y) = \sin(\pi x)\sin{(\pi y)}\sin{(\pi z)},
	\end{equation*}
	where the functions $ f $ and $ g $ are given by the exact solution. 
\end{example}
	\begin{example}\label{example,6}
  We solve the 3D biharmonic equation 
	\begin{eqnarray*}\label{4th-order PDE,3D}
		\left\{
		\begin{aligned}
			&\Delta^2 u =f\quad \mbox{in $\Omega_{duck}(\Omega_{face})$},\\
			&u=g_1 \quad \mbox{on $\partial\Omega_{duck}(\partial\Omega_{face}) $},\\
			&\nabla u\cdot \textbf{n} =g_2\quad \mbox{on $ \partial\Omega_{duck}(\partial\Omega_{face}) $},
		\end{aligned}
		\right.
	\end{eqnarray*}
	with the following exact solution:
	\begin{equation*}
		u(x,y) = \sin(\pi x)\sin{(\pi y)}\sin{(\pi z)},
	\end{equation*}
	where the functions $ f $, $ g_1 $ and $ g_2 $ are given by the exact solution. 
\end{example}
\begin{figure}[htbp]
	\centering
	\subfigure[Proposed element-based B-spline workflow]{
		\begin{minipage}{.45\textwidth}
			\centering
			\includegraphics[width=190pt]{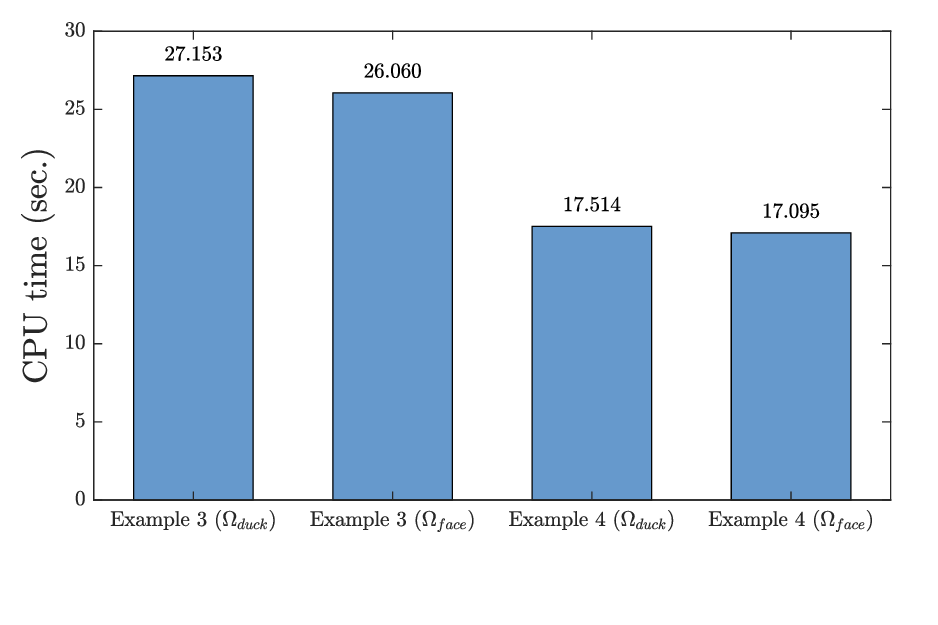}
		\end{minipage}
	}
	\subfigure[Conventional IgA (GeoPDEs \cite{GeoPDEs})]{
		\begin{minipage}{.45\textwidth}
			\centering
			\includegraphics[width=190pt]{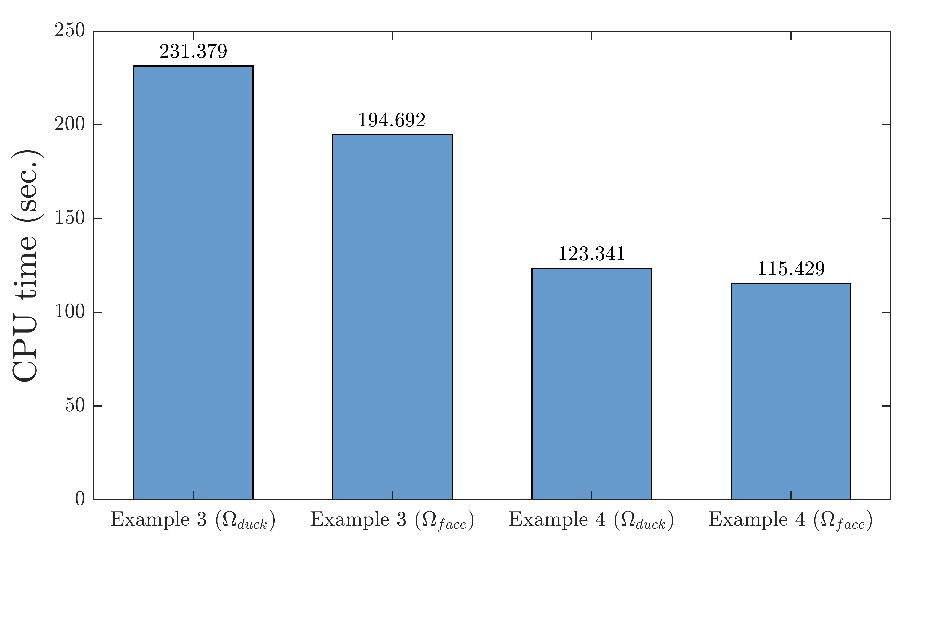}
		\end{minipage}
	}
	\caption{Computational efficiency comparison of matrix assembly in IgA.}\label{fig: CPU time}
\end{figure}
Example~\ref{example,5} and \ref{example,6} benchmark the proposed isogeometric workflow based on element-based B-spline spaces against the MATLAB GeoPDEs library \cite{GeoPDEs}, with CPU time comparisons detailed in Fig.~\ref{fig: CPU time}. 

The observed reduction in CPU time for the 3D biharmonic equation compared to the 3D Poisson equation arises from fundamental differences in stiffness matrix construction. For the biharmonic problem, stiffness matrices involve scalar products of basis functions over the domain, whereas Poisson matrices require vector-valued inner products, inherently demanding higher computational intensity. 

On the other hand, as evidenced by the two subfigures of Fig.~\ref{fig: CPU time}, the conventional isogeometric method exhibits 6.5–8.5 times higher computational costs than the proposed element-based B-spline formulation. This efficiency gain arises from the element-based representation of both basis functions and geometric mappings, which enables identical local basis function expressions across all elements. Unlike conventional methods requiring numerical quadrature points for each individual basis function, particularly in parameter domains with non-uniform knots, our formulation allows precomputing quadrature points on a single element and reusing them throughout the domain. This optimization stems from the observation that all basis function inner products follow a fixed pattern that can be efficiently tabulated. Crucially, this accelerated computation paradigm achieves FEM-like matrix assembly efficiency while preserving the exact geometry representation inherent to isogeometric analysis.
\begin{remark}
The preprocessing phase implementing Hermite interpolation for CAD control points conversion requires 0.04-0.06 seconds in all tested cases.
\end{remark}
	\section{Conclusion}
	\label{sec5}
	
	This paper presents a novel framework for constructing B-spline basis function spaces through element-wise local bases, unifying geometric precision in isogeometric analysis with the computational flexibility in finite element analysis. By defining local basis functions on B-spline elements and reconstructing global bases via recursive linear combinations, we unify the classical B-spline structure with element-wise computational workflows.
	The proposed Hermite interpolation schemes for arbitrary smoothness constraints achieve optimal approximation error estimates and allow direct boundary condition enforcement. Furthermore, the decomposition of CAD-generated global mappings into local B-spline element mappings enables efficient numerical integration, even in non-uniform knot configurations. 
	
	Looking ahead, the newly revealed  element-wise structure of B-spline basis function spaces not only facilitates enhanced computational efficiency but also provides a natural framework for rigorous mathematical analysis. Our future investigations will adopt advanced finite element methodologies to systematically quantify the computational efficiency and develop theoretical frameworks addressing the superconvergence phenomena.  Meanwhile, the application of the B-spline element framework to adaptive local refinement strategies (e.g., hierarchical B-splines) coupled with rigorous theoretical analyses constitutes another potential direction.


	%
	%

	
	
	\bibliographystyle{spmpsci}        
	\bibliography{SErefer}

\begin{thebibliography}{10}
\providecommand{\url}[1]{{#1}}
\providecommand{\urlprefix}{URL }
\expandafter\ifx\csname urlstyle\endcsname\relax
  \providecommand{\doi}[1]{DOI~\discretionary{}{}{}#1}\else
  \providecommand{\doi}{DOI~\discretionary{}{}{}\begingroup
  \urlstyle{rm}\Url}\fi

\bibitem{Alfeld2009}
Alfeld, P., Sorokina, T.: Two tetrahedral {$C^1$} cubic macro elements.
\newblock J. Approx. Theory \textbf{157}(1), 53--69 (2009)

\bibitem{Cosmin2015}
Anitescu, C., Jia, Y., Zhang, Y.J., Rabczuk, T.: An isogeometric collocation
  method using superconvergent points.
\newblock Comput. Methods Appl. Mech. Engrg. \textbf{284}, 1073--1097 (2015)

\bibitem{Antolin2015}
Antolin, P., Buffa, A., Calabr\`o, F., Martinelli, M., Sangalli, G.: Efficient
  matrix computation for tensor-product isogeometric analysis: the use of sum
  factorization.
\newblock Comput. Methods Appl. Mech. Engrg. \textbf{285}, 817--828 (2015)

\bibitem{Bazilevs2006}
Bazilevs, Y., Beir\~ao~da Veiga, L., Cottrell, J.A., Hughes, T.J.R., Sangalli,
  G.: Isogeometric analysis: approximation, stability and error estimates for
  {$h$}-refined meshes.
\newblock Math. Models Methods Appl. Sci. \textbf{16}(7), 1031--1090 (2006)

\bibitem{Boor1978}
de~Boor, C.: A practical guide to splines.
\newblock Springer-Verlag, New York-Berlin (1978)

\bibitem{Borden2011}
Borden, M.J., Scott, M.A., Evans, J.A., Hughes, T.J.R.: Isogeometric finite
  element data structures based on {B}\'ezier extraction of {NURBS}.
\newblock Internat. J. Numer. Methods Engrg. \textbf{87}(1-5), 15--47 (2011)

\bibitem{Calabro2017}
Calabr\`o, F., Sangalli, G., Tani, M.: Fast formation of isogeometric
  {G}alerkin matrices by weighted quadrature.
\newblock Comput. Methods Appl. Mech. Engrg. \textbf{316}, 606--622 (2017)

\bibitem{Cohen2013}
Cohen, E., Lyche, T., Riesenfeld, R.F.: A {B}-spline-like basis for the
  {P}owell-{S}abin 12-split based on simplex splines.
\newblock Math. Comp. \textbf{82}(283), 1667--1707 (2013)

\bibitem{Hughes2009}
Cottrell, J., Hughes, T., Bazilevs, Y.: Isogeometric analysis.
\newblock John Wiley \& Sons, Ltd., Chichester (2009)

\bibitem{Ronald1994}
Graham, R.L., Knuth, D.E., Patashnik, O.: Concrete mathematics.
\newblock Addison-Wesley Publishing Company, Reading, MA (1994)

\bibitem{Guo2015}
Guo, Y., Ruess, M.: Nitsche's method for a coupling of isogeometric thin shells
  and blended shell structures.
\newblock Comput. Methods Appl. Mech. Engrg. \textbf{284}, 881--905 (2015)

\bibitem{Hollig2003}
H\"ollig, K.: Finite element methods with {B}-splines, \emph{Frontiers in
  Applied Mathematics}, vol.~26.
\newblock Society for Industrial and Applied Mathematics (SIAM), Philadelphia,
  PA (2003)

\bibitem{Hollig2001}
H\"ollig, K., Reif, U., Wipper, J.: Weighted extended {B}-spline approximation
  of {D}irichlet problems.
\newblock SIAM J. Numer. Anal. \textbf{39}(2), 442--462 (2001)

\bibitem{Hu2015}
Hu, J., Zhang, S.: The minimal conforming {$H^k$} finite element spaces on
  {$R^n$} rectangular grids.
\newblock Math. Comp. \textbf{84}(292), 563--579 (2015)

\bibitem{hughes2000finite}
Hughes, T.J.R.: The finite element method.
\newblock Linear Static and Dynamic Finite Element Analysis  (2000)

\bibitem{Hughes2005IGA}
Hughes, T.J.R., Cottrell, J.A., Bazilevs, Y.: Isogeometric analysis: {CAD},
  finite elements, {NURBS}, exact geometry and mesh refinement.
\newblock Comput. Methods Appl. Mech. Engrg. \textbf{194}(39-41), 4135--4195
  (2005)

\bibitem{Kumar2017}
Kumar, M., Kvamsdal, T., Johannessen, K.A.: Superconvergent patch recovery and
  a posteriori error estimation technique in adaptive isogeometric analysis.
\newblock Comput. Methods Appl. Mech. Engrg. \textbf{316}, 1086--1156 (2017)

\bibitem{Lai2007}
Lai, M., Schumaker, L.L.: Spline functions on triangulations, vol. 110.
\newblock Cambridge University Press, Cambridge (2007)

\bibitem{Lyche2022FCM}
Lyche, T., Manni, C., Speleers, H.: Construction of {$C^2$} cubic splines on
  arbitrary triangulations.
\newblock Found. Comput. Math. \textbf{22}(5), 1309--1350 (2022)

\bibitem{Lyche2022}
Lyche, T., Merrien, J.L., Sauer, T.: Simplex-splines on the {C}lough-{T}ocher
  split with arbitrary smoothness.
\newblock pp. 85--121. Springer, Cham (2022)

\bibitem{Pan2020}
Pan, M., J\"uttler, B., Giust, A.: Fast formation of isogeometric {G}alerkin
  matrices via integration by interpolation and look-up.
\newblock Comput. Methods Appl. Mech. Engrg. \textbf{366}, 113005, 25 (2020)

\bibitem{piegl2012nurbs}
Piegl, L., Tiller, W.: The NURBS book.
\newblock Springer Science \& Business Media (2012)

\bibitem{Sange2019}
Sande, E., Manni, C., Speleers, H.: Sharp error estimates for spline
  approximation: explicit constants, {$n$}-widths, and eigenfunction
  convergence.
\newblock Math. Models Methods Appl. Sci. \textbf{29}(6), 1175--1205 (2019)

\bibitem{Sande2020}
Sande, E., Manni, C., Speleers, H.: Explicit error estimates for spline
  approximation of arbitrary smoothness in isogeometric analysis.
\newblock Numer. Math. \textbf{144}(4), 889--929 (2020)

\bibitem{Schaeuble2017}
Schaeuble, A.K., Tkachuk, A., Bischoff, M.: Variationally consistent inertia
  templates for {B}-spline- and {NURBS}-based {FEM}: inertia scaling and
  customization.
\newblock Comput. Methods Appl. Mech. Engrg. \textbf{326}, 596--621 (2017)

\bibitem{Chung1979}
Shih, C.T.: On a spline finite element method.
\newblock Math. Numer. Sinica \textbf{1}(1), 50--72 (1979)

\bibitem{Speleers2015}
Speleers, H.: A new {B}-spline representation for cubic splines over
  {P}owell-{S}abin triangulations.
\newblock Comput. Aided Geom. Design \textbf{37}, 42--56 (2015)

\bibitem{GeoPDEs}
V\'azquez, R.: A new design for the implementation of isogeometric analysis in
  {O}ctave and {M}atlab: {G}eo{PDE}s 3.0.
\newblock Comput. Math. Appl. \textbf{72}(3), 523--554 (2016)

\bibitem{Beirao2011}
Beir\~ao~da Veiga, L., Buffa, A., Rivas, J., Sangalli, G.: Some estimates for
  {$h$}-{$p$}-{$k$}-refinement in isogeometric analysis.
\newblock Numer. Math. \textbf{118}(2), 271--305 (2011)

\bibitem{Beirao2014}
Beir\~ao~da Veiga, L., Buffa, A., Sangalli, G., V\'azquez, R.: Mathematical
  analysis of variational isogeometric methods.
\newblock Acta Numer. \textbf{23}, 157--287 (2014)

\bibitem{Vuong2011}
Vuong, A.V., Giannelli, C., J\"uttler, B., Simeon, B.: A hierarchical approach
  to adaptive local refinement in isogeometric analysis.
\newblock Comput. Methods Appl. Mech. Engrg. \textbf{200}(49-52), 3554--3567
  (2011)

\bibitem{Zak2021}
\.Zak, A., Waszkowiak, W.: A spline-based {FE} approach to modelling of high
  frequency dynamics of 1-{D} structures.
\newblock Comput. Math. Appl. \textbf{104}, 14--33 (2021)

\end{thebibliography}
	
\end{document}